\newcommand{\blue}{\color{blue}}
\newcommand{\raph}{\blue}
\newcommand\blfootnote[1]{
	\begingroup
	\renewcommand\thefootnote{}\footnote{#1}%
	\addtocounter{footnote}{-1}%
	\endgroup
}
\begin{document}
\newcommand{\beq}{\begin{eqnarray}}
\newcommand{\eeq}{\end{eqnarray}}
\newcommand{\beas}{\begin{eqnarray*}}
\newcommand{\enas}{\end{eqnarray*}}
\newcommand{\bea}{\begin{eqnarray}}
\newcommand{\ena}{\end{eqnarray}}
\newcommand{\bms}{\begin{multline*}}
\newcommand{\ems}{\end{multline*}}
\newcommand{\qmq}[1]{\quad \mbox{#1} \quad}
\newcommand{\qm}[1]{\quad \mbox{#1}}
\newcommand{\nn}{\nonumber}
\newcommand{\bbox}{\hfill $\Box$}
\newcommand{\ignore}[1]{}
\newcommand{\bs}[1]{\boldsymbol{#1}}
\newcommand{\countingmeasures}{\math}
\newcommand{\I}[2]{I(#1;\,#2)}
\newcommand{\interior}[1]{\mathop{\mathrm{int}}#1}

\newcommand{\var}{\mathop{\mathrm{Var}}}
\newcommand{\RR}{\mathbb{R}}
\renewcommand{\S}{\mathbb{S}}
\renewcommand{\P}{\mathbb{P}}
\newcommand{\E}{\mathbb{E}}
\newcommand{\G}{\mathcal{G}}
\newcommand{\1}{\textbf{1}}
\newcommand{\norm}[1]{\lVert #1 \rVert}
\newcommand{\vol}[1]{| #1 |}
\newcommand{\Vol}{\text{Vol}}
\newcommand{\influential}{\textsc{Influential}}
\newcommand{\abs}[1]{\lvert #1 \rvert}
\newcommand{\Ppp}[1]{\mathscr{P}_{#1}}
\newcommand{\bpp}[1]{\mathscr{U}_{#1}}
\newcommand{\Bin}{\mathop{\mathrm{Bin}}}
\newcommand{\Poi}{\mathop{\mathrm{Poi}}}

\newcommand{\X}{\mathbb{X}}
\newcommand{\y}{\textbf{y}}
\newcommand{\x}{\textbf{x}}

\newcommand{\lcolor}[1]{\textcolor{magenta}
{#1}}  % Larry's colored text
\newcommand{\lcomm}[1]{\marginpar{\tiny\lcolor{#1}}}  % Larry's marginpars
\newcommand{\toby}{\color[rgb]{0,0.5,0}}
\newcommand{\tcolor}[1]{%
\textcolor[rgb]{0,0.5,0}
{#1}}
\newcommand{\tcomm}[1]{\marginpar{\tiny\tcolor{#1}}}  % Larry's marginpars
\newcommand{\extendedtcolor}{
\color[rgb]{0,0.5,0}
}

\newcommand{\rcomm}[1]{\marginpar{\tiny\raph{#1}}} 

\newcommand{\tr}{\mbox{tr}}
%perl texref.pl -unref cnm0616.tex > labelrefs
\newtheorem{theorem}{Theorem}
\newtheorem{corollary}[theorem]{Corollary}
\newtheorem{conjecture}[theorem]{Conjecture}
\newtheorem{proposition}[theorem]{Proposition}
\newtheorem{remark}[theorem]{Remark}
\newtheorem{lemma}[theorem]{Lemma}
\newtheorem{condition}[theorem]{Condition}
\newtheorem{assumption}{Assumption}
\renewcommand*{\theassumption}{\Alph{assumption}}
\newtheorem*{claim}{Claim}
\theoremstyle{definition}
\newtheorem{definition}[theorem]{Definition}
\newcommand{\pf}{\noindent {\bf Proof:} }
%\def\blfootnote{\xdef\@thefnmark{}\@footnotetext}
%\tableofcontents
\newcommand{\equlaw}{\stackrel{(d)}{=}}

\title{{\bf\Large Bounds to the normal for proximity region graphs}}
\author[1]{Larry Goldstein}
\author[1]{Tobias Johnson}
\author[2]{Rapha\"el Lachi\`eze-Rey}
\affil[1]{University of Southern California} \affil[2]{Universit\'e Paris Descartes, Sorbonne Paris Cit\'e}

\maketitle

\begin{abstract}
In a proximity region graph ${\cal G}$ in $\mathbb{R}^d$, two distinct points $x,y$ of a point process $\mu$ are connected when the `forbidden region' $S(x,y)$ these points determine has empty intersection with $\mu$. The Gabriel graph, where $S(x,y)$ is the open disc with diameter the line segment connecting $x$ and $y$, is one canonical example. When $\mu $ is a Poisson or binomial process, under broad conditions on the  regions $S(x,y)$, bounds on the Kolmogorov and Wasserstein distances to the normal are produced for functionals of ${\cal G}$, including the total number of edges and the total length. Variance lower bounds, not requiring strong stabilization, are also proven to hold for a class of such functionals.
\end{abstract}

\footnotetext[1]{The work of the first author was partially supported by NSA-H98230-15-1-0250, and of the second author by NSF grant DMS-1401479}

\blfootnote{MSC 2010 subject classifications:
	60D05\ignore{Geometric probability and stochastic geometry}, 52A22\ignore{Random sets and integral geometry}, 60F05\ignore{Central limit and other weak theorems}} 

\blfootnote{Key words and
	phrases: Forbidden region graph, Berry-Esseen bounds, stabilization, Poisson functionals}

\section{Introduction}
The family of graphs that we study here, all with vertex sets given by a locally finite point process $\mu$ in $\mathbb{R}^d$, is motivated by two canonical examples considered in \cite{AlSh10}, the Gabriel graph and the relative neighborhood graph. Two distinct points $x$ and $y$ of $\mu$ are connected by an edge in the Gabriel graph if and only if there does not exist any point $z$ of the process $\mu$ lying in the open disk whose diameter is the line segment connecting $x$ and $y$. The relative neighborhood graph has an edge between $x$ and $y$ if and only if there does not exist a point $z$ of $\mu$ such that 
$$
\max(\|x-z\|, \|z-y\|) < \|x-y\|,
$$
that is, if and only if there is no point $z$ of $\mu$ that is closer to either $x$ or $y$ than these points are to each other.

These two examples are special cases of `proximity graphs' as defined in \cite{Dev88}, where distinct points $x$ and $y$ of $\mu$ are connected if and only if a region $S(x,y)$ determined by $x$ and $y$ contains no points of $\mu$, that is, when $\mu \cap S(x,y) = \emptyset$. As $S(x,y)$ must be free of points of $\mu$ in order for $x$ and $y$ to be joined, we call $S(x,y)$ the `forbidden region' determined by $x$ and $y$. 
In particular, with $B(x,r)$ and $B^o(x,r)$ denoting the closed and open ball of radius $r$ centered at $x$, respectively, the forbidden regions of the Gabriel graph are given by
\bea \label{GabrielSxy}
S(x,y)=B^o((x+y)/2,\|y-x\|/2),
\ena
and those of the relative neighborhood graph by
\bea \label{rngSxy}
S(x,y)=B^o(x,\|y-x\|) \cap B^o(y,\|x-y\|).
\ena
It is easy to check that the forbidden regions $S(x,y)$ of the Gabriel graph are contained in those of the relative neighbor graph, and hence edges of latter are also edges of former.

We refer to the graphs formed in this manner also as `forbidden region graphs'. Indeed, when coining the label `proximity graphs' in \cite{Dev88}, one reads that `this term could be misleading in some cases.' Indeed, forbidden region graphs may depend on `non-proximate' information, such as the graph considered in Example 5 of \cite{Dev88}, whose forbidden region $S(x,y)$ is the infinite strip bounded by the two parallel hyperplanes containing $x$ and $y$, each perpendicular to $y-x$. Allowing forbidden regions to depend on larger sets of points and to be determined by more complex rules yield well studied graphs with additional structure, including the Minimum Spanning Tree and the Delaunay triangulation, see  \cite{AlSh10}.

For a forbidden region graph $\G$ and a Poisson or binomial point process $\mu$ in some bounded measurable `viewing window' denoted $\X$ in the sequel, ensuring that the graph and functional $L(\mu)$ in \eqref{eq:defLalpha}  is finite, we study the distribution of 
\bea \label{eq:defLalpha} 
L(\mu) = \sum_{\{x,y\} \subseteq \mu , x\not = y}{\bf 1}(\mu \cap S(x,y) = \emptyset)\psi(x,y),
\ena
for some $\psi\colon\mathbb{R}^d \times \mathbb{R}^d \rightarrow \mathbb{R}$ satisfying $\psi(x,y)=\psi(y,x)$. For instance, taking $\psi(x,y)=\|x-y\|^\alpha$ for some $\alpha \ge 0$, for $\alpha=0$ and $\alpha=1$ the value of $L(\mu)$ is the number of edges and the total length of ${\cal G}$, respectively. 

Recall that the Kolmogorov distance between random variables $U$ and $V$ is defined as
\begin{align*}
d_{K}(U,V)=\sup_{t\in \mathbb{R}}\left|
\P(U\leqslant t)-\P(V\leqslant t)
\right|,
\end{align*}and the Wasserstein distance as
\begin{align*}
d_{W}(U,V)=\sup_{h\in \text{\rm{Lip}}_{1}}\left|
\E[h(U)-h(V)]
\right|,
\end{align*}
where $\text{\rm{Lip}}_{1}$ stands for the class of $1$-Lipschitz functions $\mathbb{R}\to \mathbb{R}$.
Theorem \ref{thm:main}, our main result, is a bound on the normal approximation of $L$ in $d(\cdot,\cdot)$, denoting either the Wasserstein or Kolmogorov metric,
that holds under broad conditions on the forbidden regions and underlying point process. Its immediate corollary, in conjunction with the variance lower bound of Theorem \ref{prop:variance}, provides the following result for the two motivating examples just introduced. 

\begin{corollary}
	Let $\X=B(0,1)$, and suppose that $\eta_t$ is either a Poisson process  with intensity~$t$
	on $\X$, or a binomial process of $t$ independent and uniformly distributed points on $\X$, and let $F_{t}:=L(\eta_t)$ for $t \ge 1$, where $L(\cdot)$ is given in \eqref{eq:defLalpha} with
	$\psi(x,y)=\|x-y\|^\alpha$ for some $\alpha \ge 0$. Then for the Gabriel graph and the relative neighborhood graph, there exists a constant $C >0$ such that
	\begin{align*} 
	d(\widetilde F_{t},N) \leq Ct^{-1/2} \qm{for all $t \ge 1$,}
	\end{align*}
	where $\widetilde F_t = (F_t-\E F_t)/\sqrt{\var F_t}$, and $N$ is a standard Gaussian.
\end{corollary}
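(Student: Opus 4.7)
The plan is to deduce the corollary as a direct consequence of Theorem~\ref{thm:main} combined with the variance lower bound in Theorem~\ref{prop:variance}. Theorem~\ref{thm:main} is advertised as giving a bound on $d(\widetilde F_t, N)$ in terms of $t$ and $\var F_t$ (the typical form of such a bound in Poisson/binomial functional CLTs is $C t^{1/2}/\var(F_t)$, after applying moment and stabilization estimates scale-by-scale). Theorem~\ref{prop:variance} will supply $\var F_t \geq c t$ for $t \geq 1$, and combining these two bounds yields exactly the claimed $Ct^{-1/2}$ rate. Thus, the entire task reduces to verifying that the Gabriel graph and the relative neighborhood graph, with weight $\psi(x,y) = \|x-y\|^\alpha$, satisfy the hypotheses required by both theorems.

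The main geometric properties to check are the ones that typically drive the broad assumptions on the forbidden regions. For the Gabriel regions \eqref{GabrielSxy}, $S(x,y)$ is the open ball of radius $\|x-y\|/2$ around the midpoint, so $\Vol(S(x,y)) = \kappa_d \|y-x\|^d/2^d$ and $S(x,y)$ contains a ball of radius $c\|x-y\|$ around $(x+y)/2$; for the relative neighborhood regions \eqref{rngSxy}, the same volume lower bound (up to a dimension-dependent constant) and the existence of an inscribed ball of comparable radius hold by elementary geometry of the lens. Both families are translation-equivariant and scale like $\|x-y\|^d$. These are precisely the sorts of conditions that, for a uniform Poisson or binomial process, yield the exponential stabilization needed to apply Theorem~\ref{thm:main}: an edge incident to $x$ forces $S(x,y)$ to be empty, so the presence of even moderately many points in a neighborhood of $x$ bounds the stabilization radius, and the Poisson/binomial tails give exponential control. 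Moreover, since $\X = B(0,1)$, the weight $\psi(x,y) = \|x-y\|^\alpha \leq 2^\alpha$ is uniformly bounded, so all moment conditions on $\psi$ reduce to checking boundedness and integrability against $\mathbf{1}(\mu \cap S(x,y) = \emptyset)$.

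For the variance lower bound of Theorem~\ref{prop:variance}, the point is to show non-degeneracy of the first-order chaos (the add-one-cost operator): inserting a new point $z$ into $\mu$ typically changes $L$ by a non-trivially variable amount, because the number and total weight of edges created by $z$ depends measurably on the local configuration around $z$. The uniformity of the intensity on $B(0,1)$ and the translation invariance of the forbidden regions mean that this add-one-cost has a stationary-like distribution on the bulk of $\X$, so integrating its variance against the intensity $t$ over the viewing window produces the desired $\var F_t \gtrsim t$.

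The main obstacle, and the only place substantial work is hidden, is verifying the geometric/stabilization hypothesis of Theorem~\ref{thm:main} uniformly in $t$: one must show that the forbidden regions of the Gabriel and RNG examples fall into the class to which Theorem~\ref{thm:main} applies, with constants independent of $t$. Once the regions are shown to contain an inscribed ball of radius proportional to $\|x-y\|$ and to be contained in a ball of comparable radius, stabilization follows from standard arguments, and the rest is routine substitution of the bounds $\var F_t \geq ct$ into the conclusion of Theorem~\ref{thm:main}.
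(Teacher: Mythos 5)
Your approach matches the paper's: the corollary is deduced by applying Theorem~\ref{thm:main}, with the variance lower bound (Assumption~\ref{assumption:sigh}) supplied by Theorem~\ref{prop:variance}, after verifying the geometric hypotheses for the Gabriel and RNG forbidden regions. Two points deserve sharpening, however. First, Theorem~\ref{prop:variance} gives $\var L(\eta_t)\gtrsim t^{1-2\alpha/d}$, not $\gtrsim t$; the $\gtrsim t$ bound you quote is correct only for $\alpha=0$, or after the implicit rescaling $F_t\mapsto t^{\alpha/d}L(\eta_t)$ used inside the proof of Theorem~\ref{thm:main}. Moreover, Theorem~\ref{thm:main} already concludes $d(\widetilde F_t,N)\le Ct^{-1/2}$ directly once its assumptions are met; there is no need to invoke a separate ``$Ct^{1/2}/\var(F_t)$'' form of bound. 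Second, to invoke Theorem~\ref{prop:variance} one must verify not just the scaled-ball and diameter properties you list, but that the Gabriel and RNG regions form a \emph{regular $(S,u_0)$ isotropic family} (established in Remark~\ref{rk:isotropic} and the discussion following it, with $S=B^o(0,1/2)$ and $S=B^o(u_0/2,1)\cap B^o(-u_0/2,1)$ respectively) and that \emph{Assumption~\ref{assumption:specialy}} holds, that is, for each $\{w,z\}$ one can measurably pick $y\in\partial S(w,z)$ with $z\notin\partial S(w,y)$ and $w\notin\partial S(z,y)$; your sketch never mentions this assumption, which is one of the actual hypotheses of Theorem~\ref{prop:variance}. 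These are not hard to check for the two examples, but the proposal as written leaves them entirely implicit.
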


Proximity graphs arise frequently in stochastic geometry, especially for their use in communication networks, see \cite{AlSh10,Dev88,PenYuk01,Schreiber} and references therein. For most models, first and second order limit theorems were already known from the theory of stabilizing functionals (see \cite{PenYuk01,Schreiber}), but obtaining optimal speed of convergence and confidence intervals remained open. With Poisson input,   the general results available  only give a non-optimal rate of convergence, while the speed obtained here
is typically optimal for stationary stabilizing functions. For integer-valued functionals
like the number of edges, the methods of \cite{englund1981remainder} imply the rate here is optimal whenever the rates of upper and lower variance bounds agree. 
Only recently was an optimal rate of convergence established for \emph{any} geometric functional
with non-deterministic range of interaction, when \cite{last2014normal} did so for statistics of the
nearest neighbor graph and of the Poisson--Voronoi tessellation.
 Furthermore, those results are only valid with Poisson input, whereas ours also hold for binomial input.
In general, with binomial input there are few preexisting results on geometric functionals as considered here. Though one can most likely derive asymptotic normality from \cite{PenYuk01}, no speed of convergence was available at all for the models considered in this paper.  See \cite{GoldsteinPenrose} for an optimal speed for the Boolean model, where the interaction range is bounded. Chatterjee also gives a slower power law decay  for some nearest-neighbor statistics in \cite{Cha08}. Our paper is the first example of an optimal speed of convergence for geometric functionals with possibly complex dependency structure between points, and no prior bound on the speed of convergence, when the input consists of $n$ i.i.d.\ points uniformly distributed in a square of volume $n$.

In our consideration of more general graphs, we will assume that the collection of forbidden regions $\{S(x,y): \{x,y\} \subseteq \mathbb{R}^d, x \not =y \}$ 
consists of nonempty measurable subsets of $\mathbb{R}^d$ that are symmetric in that
\bea \label{S.symmetric}
S(x,y)=S(y,x) \qmq{for all $\{x,y\} \subseteq  \mathbb{R}^d, x \not =y$.}
\ena
Nonsymmetric sets $S(x,y)$ would be natural for the construction of directed forbidden region graphs, and though we do not consider them here our methods would apply.
With $\overline{S}$ denoting the closure of a set $S \subseteq \mathbb{R}^d$, we assume also that
\bea \label{eq:S.basic.prop}
\{x,y\} \subseteq \overline{S(x,y)} \setminus S(x,y),
\ena 
and that the \emph{normalized diameter} ${\cal D}$ of the collection of forbidden regions is finite, that is,

\bea \label{eq:def.diameter}
{\cal D}<\infty \qmq{where} {\cal D}=\sup\left\{\frac{\|s-t\|}{\|x-y\|}: \{s,t\} \subseteq S(x,y),\{x,y\}\subseteq \mathbb{R}^{d}, x \not = y \right\}.
\ena

Assumption~\ref{assumption:scaledball} below
requires that as $x$ and $y$ become farther apart, the forbidden regions $S(x,y)$ contain increasingly large balls. 
Note, for instance, that if all forbidden regions have empty interior, then the graph determined by a Poisson or binomial input process with an absolutely continuous intensity measure would be the complete graph almost surely. In Assumptions \ref{assumption:scaledball} and \ref{assumption:PoissonDom} and Theorem \ref{thm:main}, $\X$ will denote a window specified by a given bounded measurable subset of $\mathbb{R}^d$.
\begin{assumption}[Scaled ball condition]\label{assumption:scaledball}
	For some $\delta>0$ and window $\X$, it holds for all $\{x,y\} \subseteq \X$ that $S(x,y)\cap\X$ contains a ball of radius $\delta\norm{x-y}$.
\end{assumption}

With some slight abuse of notation, $| \cdot |$ will be used to denote both the Lebesgue measure of a measurable subset of $\mathbb{R}^d$ and also cardinality of a finite set; use will be clear from context. Our results below provide bounds on the normal approximation of $L$ in \eqref{eq:defLalpha} when the underlying graph is generated by a point process $\eta_t, t>0$ that satisfies the following conditions.

\begin{assumption} \label{assumption:PoissonDom}
	Let $\lambda$ be a probability measure on $\X$ satisfying
	\beas
	c_\lambda \vol{B} \le  \lambda(B) \le b_\lambda \vol{B} \qm{for all measurable $B \subseteq \X$}
	\enas
	for some $0<c_\lambda \le b_\lambda$.
	The point process $\eta_t$ is either a Poisson process $\Ppp{t}$ on $\X$ with intensity~$\lambda_t=t\lambda, t>0$,
	or a binomial process $\bpp{t}$ consisting of a set of i.i.d.\ variables $X_{1},\dots ,X_{t}$ with common distribution $\lambda$, for
	$t\in\mathbb{N}$.
\end{assumption}

Lastly, we require the following variance lower bound.
\begin{assumption} \label{assumption:sigh}
For $\alpha \geqslant 0$, there exists $v_\alpha > 0$ such that
\begin{align*}
 \var L(\eta_t) \geq v_\alpha t^{1-2\alpha/d} \qmq{for all $t \ge 1$.} %\label{eq:variance.requirement}
\end{align*}
\end{assumption}
Assumption \ref{assumption:sigh} is a serious one, and we separately address the question of when it is satisfied in Section \ref{sec:vlb}, see Theorem \ref{prop:variance} below.

We inform the reader that the $C$ that appears in our bounds denotes a positive constant that may not be the same at each occurrence.

\begin{theorem}
 \label{thm:main}
For a given window $\X$, let 
$\{S(x,y)\colon x,y\in\X,\,x\neq y\}$
be a collection of forbidden regions satisfying \eqref{S.symmetric}--\eqref{eq:def.diameter}, and let
Assumption~\ref{assumption:scaledball} hold.
Let $\eta_{t}$ be a point process on $\X$ satisfying Assumption~\ref{assumption:PoissonDom}, and let
\begin{align*}
F_{t}:=L(\eta_t),\qm {for $t \ge 1$,}
\end{align*} 
where $L(\cdot)$ is given in \eqref{eq:defLalpha}, where for some $C >0$ and $\alpha \ge 0$ we have $|\psi(x,y)| \le C \|x-y\|^\alpha$ for all $\{x,y\} \subseteq \mathbb{R}^d$.

If Assumption \ref{assumption:sigh} holds, then with $d(\cdot,\cdot)$ denoting either the Wasserstein or Kolmogorov distance, there exists a constant $C$ not depending on $t$ such that
\begin{align*} 
d(\widetilde F_{t},N) \leq Ct^{-1/2} \qm{for all $t \ge 1$,}
\end{align*}
where $\widetilde F_t = (F_t-\E F_t)/\sqrt{\var F_t}$, and $N$ is a standard Gaussian. 
\end{theorem}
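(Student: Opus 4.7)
The strategy is to verify that $F_t$ is a well-stabilized functional of $\eta_t$ and then invoke a second-order Poincar\'e / Malliavin--Stein type normal approximation theorem for Poisson and binomial point processes. The central object is a radius of stabilization $R(x)$ for each $x \in \X$, capturing how far from $x$ the functional $L$ can feel the insertion or removal of $x$.

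First I would rescale positions by $t^{1/d}$, so that $\eta_t$ becomes a process of bounded intensity on a window of volume of order $t$, and $\psi$ acquires a factor $t^{-\alpha/d}$ consistent with the scaling in Assumption \ref{assumption:sigh}. Next I would define $R(x)$ to be the smallest $r>0$ such that no contributing pair $\{y,z\}$ with $y=x$ or with $x\in S(y,z)$ has $\|y-z\|\ge r/{\cal D}$. By Assumption \ref{assumption:scaledball}, any such pair forces a ball of radius $\delta\|y-z\|$ inside $\X$ to be empty of $\eta_t\setminus\{x\}$; combined with the lower intensity bound of Assumption \ref{assumption:PoissonDom}, this yields
\begin{align*}
\P(R(x)\ge r) \;\le\; C\exp(-cr^d),
\end{align*}
uniformly in $x\in\X$ and $t\ge 1$, where in the binomial case the bound passes through a standard Chernoff argument on the number of points landing in the empty ball.

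Using $R(x)$ I would control the add-one-point difference $D_xF_t = F_t(\eta_t\cup\{x\})-F_t(\eta_t)$: after rescaling it is supported on pairs at distance of order $R(x)$ from $x$, so $|D_xF_t|$ is bounded by $CR(x)^d$ scores each of size at most $CR(x)^\alpha$. Combined with the exponential tail of $R(x)$ this gives uniform moments $\E|D_xF_t|^p\le C_p$ for every $p$. The second-order difference $D^2_{x_1,x_2}F_t$ vanishes unless some affected pair overlaps both insertions, which forces $\|x_1-x_2\|\le {\cal D}(R(x_1)+R(x_2))$; this is precisely the range-of-interaction condition required by the CLT.

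Feeding these moment and range bounds into the Poisson second-order Poincar\'e inequality (Last--Peccati--Schulte) produces a normal approximation bound of order $1/\sqrt{\var F_t}$; Assumption \ref{assumption:sigh} then yields the advertised $t^{-1/2}$ rate. For the binomial input the analogue for functionals of i.i.d.\ samples applies, with hypotheses verified by the same stabilization estimates. The main obstacle will be establishing the uniform exponential tail of $R(x)$ near the boundary of $\X$, where the ball guaranteed by Assumption \ref{assumption:scaledball} must be carefully tracked, and, in the binomial case, ensuring that the tail survives the dependence implicit in conditioning on a fixed total number of points. A secondary technical point is that the Kolmogorov bound requires $L^4$ control on difference operators together with an intensity-type bound on the support of $D^2_{x_1,x_2}F_t$, both of which follow from $R(x)$ possessing moments of every order.
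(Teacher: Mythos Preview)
Your overall strategy---build a stabilization radius with exponential tails from the scaled-ball condition, feed moment and range-of-interaction bounds into a second-order Poincar\'e inequality (Last--Peccati--Schulte for Poisson, Lachi\`eze-Rey--Peccati for binomial), and use Assumption~\ref{assumption:sigh} to convert the resulting $1/\sqrt{\var F_t}$ into $t^{-1/2}$---is exactly the paper's approach. The paper does not rescale positions but instead multiplies the functional by $t^{\alpha/d}$ so that the variance is of order $t$; your rescaling is an equivalent bookkeeping choice.

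There is, however, a genuine gap in your moment bound for $D_xF_t$. You write that $|D_xF_t|$ is bounded by ``$CR(x)^d$ scores each of size at most $CR(x)^\alpha$'', but the number of affected edges is not deterministically controlled by $R(x)^d$: it depends on the (random) number of process points in $B(x,CR(x))$, which is correlated with $R(x)$ itself. The paper avoids this by first proving the pointwise bound $|D_xF|\le C_\alpha\sum_{z\in A(x;\mu)}\|z-x\|^\alpha$ for an explicit set $A(x;\mu)\subseteq\overline{\mathcal{R}_S(x;\mu;\X)}$, and then computing the $p$-th moment directly via Mecke's formula (or its binomial analogue), using monotonicity of the stabilization radius to strip the extra points inserted by Mecke. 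Your route can be repaired---e.g.\ by Cauchy--Schwarz between $\P(R(x)\ge k)$ and moments of the point count in $B(x,Ck)$---but it is not the one-line step you describe.

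A second point: ``the analogue for functionals of i.i.d.\ samples'' in the binomial case is the Lachi\`eze-Rey--Peccati bound, whose hypotheses involve the quantities $B_n(f)$ and $B_n'(f)$ built from \emph{recombinations} of three independent copies of the sample. Verifying $B_n(f)\le C/n$ and $B_n'(f)\le C/n^2$ requires conditioning on the coordinates that appear in the recombinations and then invoking the same stabilization/monotonicity/Mecke-type estimates as above; this is more work than the Poisson case and should be planned for explicitly.
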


Theorem~\ref{thm:main} is based on the methods of \cite{last2014normal}, in particular on second order Poincar\'e inequalities, and also the key notion of stabilization.
To define stabilization, let $f(\mu)$ be a function of a point process $\mu$ in $\mathbb{R}^d$. 
For $x \in \mathbb{R}^d$ consider the difference (or derivative) at $x$ given by 
\bea \label{def:Dsubx}
D_x f(\mu) = f(\mu \cup \{x\}) - f(\mu), 
\ena
which is the amount that $f$ changes upon the insertion of the point $x$ into $\mu$. Higher order differences are defined iteratively, for instance $D_{x,y}^2f(\mu) = D_x( D_y f(\mu))$, so
\bea \label{Dxy.explicit}
D_{x,y}^2f(\mu)= f(\mu \cup \{x,y\})-f(\mu \cup \{y\})-f(\mu \cup \{x\} ) + f(\mu). 
\ena

There are a  number of related notions of a stabilization radius for a functional~$f$.
The one we will use is a radius $R(x;\mu)$ such that
\begin{align}
D^2_{x,y}f(\mu) =0 \qquad\text{if $\norm{y-x}>R(x;\mu)$}.\label{eq:stabR}
\end{align}
We say in this case that
$R(x;\mu)$ is \emph{{  a stabilization radius}} for $f$ around $x$.

When dealing with a function of a Poisson process $\Ppp{t}$ with growing intensity $\lambda_t$, one key condition from \cite{last2014normal} required to obtain bounds to the normal for a properly standardized functional $f$ is that over the observation window $\X$, 
\bea \label{int.P.to.a.finite}
\sup_{x \in \X, t \ge 1} \int \P(D_{x,y}^2 f(\Ppp{t}) \not = 0)^a \lambda_t(dy)< \infty,
\ena
for $a$ some small number, depending on low moments of the derivatives of $f$.
If there exists a stabilization radius for $f$ that is small with sufficiently high probability,
then \eqref{int.P.to.a.finite} holds. In Section~\ref{sec:stabilization}, we construct such a radius and prove that it exhibits exponential decay under very weak conditions on the forbidden regions.

We now address Assumption~\ref{assumption:sigh} in Theorem \ref{thm:main}, the lower bound on $\var L(\eta_t)$.
Penrose and Yukich give a general lower bound for the variance of Poisson and binomial statistics 
in \cite{PenYuk01}. Their result requires a statistic to be \emph{strongly stabilized}.
(This notion of stabilization is also referred to as stabilization for add-one cost
or as external stabilization---see \cite{Schreiber} for a general survey.)
We cannot apply this result because our statistic $L$ is not strongly stabilized 
unless we impose additional constraints on the forbidden regions, such as requiring them to be convex.
Another possible approach would be to use the results of \cite[Section~5]{last2014normal}.
These are applicable to $L$, but only for the easier case of Poisson input. 
We are thus forced to give a new argument to prove that Assumption~\ref{assumption:sigh} holds in some generality. We state Assumption \ref{assumption:specialy}, an additional technical condition required, followed by Theorem \ref{prop:variance}, providing sufficient conditions for Assumption~\ref{assumption:sigh}. For a simple statement we restrict ourselves to the regular isotropic case as specified by Definition \ref{df:isotropic}, but a more general result can be formulated on conditions that make the expectation  \eqref{eq:more.general.than.iso} zero.
Let $\partial B$ denote the boundary of a set $B \subseteq \mathbb{R}^d$.

\begin{assumption} \label{assumption:specialy}
For all $\{w,z\}\subseteq B(0,1)$ there exists
$y\in\partial S(w,z)$ such that
$z\notin\partial S(w,y)$ and $w\notin\partial S(z,y)$.  We furthermore assume that the choice $(w,z)\mapsto y\in \partial S(w,z)$ can be made in a measurable way.
\end{assumption}

To state the variance lower bound, we work in a setup where the forbidden region $S(x,y)$
is given by a template shifted and scaled according to $x$ and $y$.
\begin{definition}[Regular isotropic family] \label{df:isotropic}
Let $S\subseteq\RR^d$ be a bounded, measurable set symmetric around an axis 
given by a unit vector
$u_0\in\RR^d$; that is, any rotation leaving $u_0$ invariant also leaves $S$ invariant.
Assume that rotations taking $u_0$ to $-u_0$ leave $S$ fixed and that
$\{u_0,-u_0\}\subseteq 2(\overline{S}\setminus S)$. Also assume that
$S$ contains an open ball and has negligible boundary.

Given $x,y\in\RR^d$ with $x\neq y$, let $\rho_{xy}$ be the rotation tranforming $u_0$ into 
$(x-y)/\norm{x-y}$ and leaving invariant the orthogonal complement of the space spanned
by $\{u_0,x-y\}$. Then, define
\begin{align*}
  S(x,y) &= (x+y)/2 + \norm{x-y}\rho_{xy}(S).
\end{align*}
We call the resulting
collection of forbidden regions a \emph{regular $(S,u_0)$ isotropic family}.
\end{definition} 
Because $S$ is symmetric around $u_0$, we could have taken $\rho_{xy}$ to be any rotation
transforming $u_0$ into $(x-y)/\norm{x-y}$ without affecting the final definition of $S(x,y)$.
Indeed, if $\rho$ and $\rho'$ are any two rotations tranforming $u_0$ into $(x-y)/\norm{x-y}$,
then the rotation $\rho^{-1}\rho'$ leaves $u_0$ invariant and hence also leaves $S$ invariant.
Thus $\rho' (S)=\rho\rho ^{-1}(\rho' (S))=\rho(S)$.
We also mention that in $\RR^2$, the vector $u_0$ is irrelevant and $S$ need not have
any rotational invariance, since the only rotation leaving $u_0$ invariant is the identity, which
necessarily leaves $S$ invariant as well.

One should think of $S(x,y)$ in a regular $(S,u_0)$ isotropic family as being generated 
by translating $S$ to the midpoint of $x$ and $y$, then rotating $S$ according to the orientation
of $x$ and $y$, and then scaling $S$ according to the distance between $x$ and $y$.
Our assumptions that rotations taking $u_0$ to $-u_0$ leave $S$ fixed and that 
$\{u_0,-u_0\}\subseteq 2(\overline{S}\setminus S)$ ensure that the family satisfies
properties~\eqref{S.symmetric} and \eqref{eq:S.basic.prop}.
Later in this introduction, we will show that the forbidden regions of 
our two canonical examples, the Gabriel graph and
the relative neighborhood graph, are regular isotropic families.

\begin{theorem}\label{prop:variance}
  Suppose the forbidden regions $\{S(x,y): \{x,y\} \subseteq \mathbb{R}^d, x \not = y\}$ form a regular $(S,u_0)$ isotropic family and satisfy Assumption~\ref{assumption:specialy}. Assume further that
    the scaled ball condition, Assumption~\ref{assumption:scaledball}, is satisfied with the role of $\X$ played by $t^{1/d}\X\cap B(x,r)$ for a fixed $\delta>0$ for any $t$ and $r$, that $\X$ is star shaped with star center at the origin, and that it contains an open set around the origin. For the function $\psi$ in  the definition \eqref{eq:defLalpha} of $L$, assume
   \begin{itemize}
    \item $\psi(ax,ay)=a^\alpha \psi(x,y)$ for all $a>0$ and $\{x,y\} \subseteq \mathbb{R}^d$ 
    \item $\psi(x,y)\not = 0$ for all $x \not =y$
    \item $\psi(x,y)$ is continuous on $\mathbb{R}^{d}\times \mathbb{R}^{d}$.
    \end{itemize}
   Then there is a constant $v_\alpha>0$ such that Assumption \ref{assumption:sigh}
    holds when $\eta_t$ is either a homogeneous Poisson process on $\X$ with intensity $t$ or
    a binomial point process of $t$ independent and uniformly distributed points on $\X$.
\end{theorem}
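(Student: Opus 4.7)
The approach is to rescale to unit intensity, use Assumption~\ref{assumption:specialy} to construct a local "edge toggle" in each of $\Theta(t)$ disjoint wells, assemble a mean-zero test statistic that detects these toggles, and conclude by Cauchy--Schwarz. The scaling invariance $S(au,av)=aS(u,v)$ built into the $(S,u_0)$-isotropic family together with $\psi(au,av)=a^{\alpha}\psi(u,v)$ gives, with $\mu_t:=t^{1/d}\eta_t$ on $\X_t:=t^{1/d}\X$, the identity $L(\eta_t)=t^{-\alpha/d}L(\mu_t)$; here $\mu_t$ is either a unit-intensity Poisson process or a set of $t$ uniform points on $\X_t$. It thus suffices to prove $\var L(\mu_t)\geq ct$ for some $c>0$ and all large $t$.

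Fix $w_0,z_0\in\mathbb{R}^d$ with $\|w_0-z_0\|=1$. Assumption~\ref{assumption:specialy} supplies $y_0\in\partial S(w_0,z_0)$ with $z_0\notin\partial S(w_0,y_0)$ and $w_0\notin\partial S(z_0,y_0)$. Regularity of the isotropic family lets us pick small balls $V^w,V^z$ around $w_0,z_0$ and a tiny box $V^y$ around $y_0$, bisected by $\partial S(w_0,z_0)$ into equal-volume halves $V^{y,+}$ and $V^{y,-}$, so that uniformly over $(w,z,y)\in V^w\times V^z\times V^y$ one has $z\notin S(w,y)$, $w\notin S(z,y)$, and $\{y\in S(w,z)\}=\{y\in V^{y,+}\}$. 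Among the three possible edges of a triple at such $(w,z,y)$, only $(w,z)$ changes status as $y$ crosses the bisecting surface. Because $\X\supseteq B(0,r_0)$, the window $\X_t$ contains $B(0,r_0 t^{1/d})$; fixing $R$ to enclose $V^w\cup V^z\cup V^y$ and $\Lambda=KR$ with $K$ set below, pack $m=\lfloor c_\star t\rfloor$ disjoint balls $B_i:=B(z_i,R)$ so that the inflations $\widetilde B_i:=B(z_i,\Lambda)$ are also pairwise disjoint and contained in $\X_t$, and translate $V^w,V^z,V^y$ inside each $B_i$ to obtain $V_i^w,V_i^z,V_i^y$.

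Define the good event
\begin{align*}
E_i:=\{\mu_t\cap B_i \text{ is three points, one each in } V_i^w,V_i^z,V_i^y\}\cap\{\mu_t\cap(\widetilde B_i\setminus B_i)=\emptyset\}\cap\mathcal{N}_i,
\end{align*}
where $\mathcal{N}_i$ rules out edges of $\mathcal{G}(\mu_t)$ with both endpoints outside $\widetilde B_i$ whose forbidden regions cut $V_i^y$ transversely. Taking $\Lambda$ large enough, Assumption~\ref{assumption:scaledball} applied to $t^{1/d}\X\cap B(z_i,r)$ forces any such cross edge to have a forbidden region containing a large ball, which intersects $\mu_t$ with positive probability; consequently $\P(E_i)\geq p_0>0$ uniformly in $i$ and $t$. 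On $E_i$, the $(w_i,z_i)$ edge exists iff $y_i\in V_i^{y,-}$, while every other edge of $\mathcal{G}(\mu_t)$ has status unchanged as $y_i$ varies within $V_i^y$. Set $T_i:=\mathbf{1}(E_i,y_i\in V_i^{y,+})-\mathbf{1}(E_i,y_i\in V_i^{y,-})$ and $T:=\sum_i T_i$; the equal-volume construction gives $\E T_i=0$, and therefore
\begin{align*}
\mathrm{Cov}(L(\mu_t),T_i)=-\E\bigl[\psi(w_i,z_i)\mathbf{1}(E_i,y_i\in V_i^{y,-})\bigr]\leq -\tfrac12 p_0\inf_{\overline{V^w}\times\overline{V^z}}\psi=:-a<0,
\end{align*}
with $a>0$ by continuity and non-vanishing of $\psi$ on the compact $\overline{V^w}\times\overline{V^z}$. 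For Poisson input the $T_i$ are independent (disjoint $\widetilde B_i$) with $|T_i|\leq 1$, so $\var T\leq m$; for binomial input, conditioning on the cell counts $(|\mu_t\cap\widetilde B_i|)_i$ restores independence and a short computation still yields $\var T\leq Cm$. Cauchy--Schwarz then gives
\begin{align*}
\var L(\mu_t)\geq \frac{\mathrm{Cov}(L(\mu_t),T)^2}{\var T}\geq \frac{(am)^2}{Cm}\geq c't,
\end{align*}
which upon rescaling by $t^{-2\alpha/d}$ yields the theorem.

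The main obstacle is the design of $\mathcal{N}_i$ together with the uniform-in-$t$ lower bound on $\P(E_i)$: one must control long edges of $\mathcal{G}(\mu_t)$ whose forbidden regions penetrate $V_i^y$ only partially, since such edges would cause the non-toggled part of $L(\mu_t)$ to depend on $y_i$'s sub-position within $V_i^y$ and corrupt the clean computation of $\mathrm{Cov}(L,T_i)$. This is where the finite normalized diameter $\mathcal{D}$, the regularity of the isotropic family, and the scaled ball condition on restricted windows $t^{1/d}\X\cap B(x,r)$ are all used in concert.
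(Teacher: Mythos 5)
Your overall idea---exploit Assumption~\ref{assumption:specialy} to build a local ``edge toggle'' in each of $\Theta(t)$ separated wells, then lower-bound the variance via Cauchy--Schwarz against a mean-zero detector $T=\sum_i T_i$---is close in spirit to what the paper does (the paper builds its ``influential pairs'' from exactly this kind of toggle, via Lemma~\ref{lem:nondet} and Theorem~\ref{thm:nondgeneracy}). However, the implementation has a genuine gap that your own last paragraph flags but does not resolve, and the paper's conditioning device exists precisely to avoid it.

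The crucial problem is that $E_i$ is \emph{not} determined by $\mu_t\cap\widetilde B_i$, because $\mathcal{N}_i$ refers to edges whose endpoints lie outside $\widetilde B_i$ and whose presence in $\G(\mu_t)$ depends on $\mu_t$ far from $\widetilde B_i$. Consequently the $T_i$ are not independent even for Poisson input, and the bound $\var T\leq m$ is unjustified; you would need a decay-of-correlation estimate, which is not supplied. Worse, there is an internal tension in $E_i$: you require $\mu_t\cap(\widetilde B_i\setminus B_i)=\emptyset$, so any ``large ball'' that a cross-edge's forbidden region must contain near $B_i$ (via the scaled ball condition) cannot be killed by a $\mu_t$-point inside $\widetilde B_i$; it must be killed by points outside $\widetilde B_i$, which is exactly the non-local dependence you wanted to exclude. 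Two further problems: (i) even when $\mathcal{N}_i$ holds, $L_{-i}$ still depends on the \emph{sub-position} of $y_i$ inside $V_i^y$ through the weights $\psi(y_i,a)$ of edges incident to $y_i$; this contributes an error to $\mathrm{Cov}(L,T_i)$ that you do not control and that, summed over $i$, is not obviously negligible relative to the main term. (ii) The claim that $\{y\in S(w,z)\}=\{y\in V^{y,+}\}$ uniformly over $(w,z)\in V^w\times V^z$ is false as stated, since $\partial S(w,z)$ moves with $(w,z)$; one needs $V^w,V^z$ vanishingly small compared with $V^y$, and $\E T_i=0$ is only exact if the bisecting halves are defined relative to the \emph{realized} boundary, which is random. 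The paper sidesteps all of this by proving nondegeneracy of $D_xL(\Ppp\infty)$ once and for all (Lemma~\ref{lem:nondet}, Theorem~\ref{thm:nondgeneracy}), defining ``influential pairs'' via the stabilization radius $R_S$ (Lemma~\ref{lem:different.points.can.differ} and the definition of $\influential_{1,2}$), and then splitting $\mu=\mu_1\cup\mu_2$; conditionally on $\mu_1$ containing $\Theta(t)$ influential pairs, Lemma~\ref{lem:stabilization} makes the contributions $D_{X_j}L(\mu_1)$ from the singleton $\mu_2$-points genuinely conditionally independent, so the variance lower bound follows from the decomposition $\var L \geq \E\var(L\mid\mathcal F)$ without any Cauchy--Schwarz or correlation-decay argument. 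To repair your route you would either have to prove that the $T_i$ are approximately uncorrelated, or replace $E_i$ by something $\sigma(\mu_t\cap\widetilde B_i)$-measurable and then control the resulting contamination of the covariance; both amount to reinventing the conditioning step.
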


We end this section by introducing some additional terminology about forbidden regions
and regular isotropic families. As already stated, the graph with vertex set a locally finite point configuration $\mu$ in $\mathbb{R}^d$ is the $S(x,y)$ forbidden region graph on $\mu$ when an edge exists between points $x$ and $y$ of $\mu$ if and only if $x \not = y$ and $S(x,y) \cap \mu = \emptyset$. That is, we connect points $x$ and $y$ of $\mu$ if and only if they are distinct, and there are no points of $\mu$ lying in the forbidden region $S(x,y)$ that these two points generate.
Hence, for $x \in \mu$, the set of edges $\mathcal{G}_{S}(x;\mu)$ incident to $x$ in $\mu$, and the edge set $\mathcal{G}_{S}(\mu)$ of the forbidden region graph are given, respectively, by 
\bea \label{def:GSedge.set}
\G_{S}(x;\mu)=\{\{x,y\}:\{x,y\} \subseteq \mu, x \not = y, S(x,y)\cap \mu =\emptyset \} \qmq{and} \G_{S}(\mu ) = \bigcup_{x \in \mu} \G_{S}(x;\mu).
\ena
We may drop the subscript when the dependence on $S$ is clear from context.

%We take $x \not = y$ for convenience as edge lengths of zero have no effect on the function \eqref{eq:defLalpha} of interest for $\alpha>0$, and neither for $\alpha=0$ when making the convention $0^0=0$. 
%The assumption that $S(x,y) \cap \{x,y\} = \emptyset$ is also not essential, as we can replace conditions of the form $\mu \cap S(x,y) \subseteq E$ by $\mu \cap (S(x,y) \setminus \{x,y\}) \subseteq E$.

We call a collection $S(x,y)$ of forbidden regions \emph{translation invariant} when
\beas %\label{S.translates}
S(x+z,y+z)=S(x,y)+z \qmq{for all $\{x,y,z\} \subseteq  \mathbb{R}^d, x \not = y$,}
\enas
and we observe that a regular isotropic family is always translation invariant.
The normalized diameter \eqref{eq:def.diameter} for a regular $(S,u_0)$ isotropic family
is given by
\beas
{\cal D}= 
%\sup\left\{\frac{\|(s-(x+y)/2)-(t-(x+y)/2)\|}{\|x-y\|};s,t\in S\left(x,y\right), x,y \in \mathbb{R}^d, x \not = y\right\}\\
\sup\{\|y-x\| : \{x,y\} \subseteq   S\}.
\enas

Our two canonical examples, the Gabriel graph and the relative neighborhood graph, are both 
regular isotropic families. With $u_{0}=(1,0,\dots,0)$, the  Gabriel graph is obtained by setting $S=B^o(0,1/2)$, and the relative neighborhood graph by $S=B^{o}(u_0/2,1)\cap B^{o}(-u_0/2,1)$.
For the Gabriel graph, we then have
\beas
S(x,y)= (x+y)/2 + \norm{x-y}\rho_{xy}\bigl(B^o(0,1/2)\bigr) = (x+y)/2 + \norm{x-y}B^o(0,1/2),
\enas
which agrees with \eqref{GabrielSxy}.
Rotating the template $S$ given above for the relative neighborhood graph, we have
\begin{align*}
  \rho_{xy}(S) &= B^o\bigl((x-y)/2\norm{x-y}, 1\bigr) \cap B^o\bigl((y-x)/2\norm{y-x}, 1\bigr),
\end{align*}
which yields
\begin{align*}
  S(x,y) &= (x+y)/2 +\norm{x-y}\rho_{x,y}(S)\\
         &= (x+y)/2 + B^o((x-y)/2,\norm{x-y}) \cap B^o((y-x)/2,\norm{y-x})\\
         &= B^o(x,\|y-x\|) \cap B^o(y,\|x-y\|),
\end{align*}
agreeing with \eqref{rngSxy}.
%We note that \eqref{eq:basic.prop.family}, and hence both \eqref{S.symmetric} and \eqref{eq:S.basic.prop}, are satisfied for these two basic examples. 

\section{Radius of stabilization}\label{sec:stabilization}
We begin this section by constructing a set in \eqref{def:calRsubS} that will serve as a stabilizing region about a point $x \in \mathbb{R}^d$, or more generally around a subset $U \subseteq \mathbb{R}^d$. Our radius $R_S(U;\mu;\X)$ is then constructed in \eqref{def:Rs} in terms of this set. We prove in Lemma \ref{lem:monotonicity} that $R_S(U;\mu;\X)$ is monotone in $\mu$, and in Lemma~\ref{lem:stabilization} that it is a stabilization radius for $L$ around $x$ as defined in \eqref{eq:stabR}.
In Proposition \ref{prop:Rs.exponential.bound}, we show that the stabilization radius has exponentially
decaying tails with standard Poisson or binomial input under
Assumption \ref{assumption:scaledball}, the scaled ball condition, on the forbidden regions. We remind the reader that $\X \subseteq \mathbb{R}^d$ is a bounded measurable window.

For $U \subseteq \mathbb{R}^d$, let 
\begin{multline} \label{def:calRsubS}
  \mathcal{R}_S(U; \mu; \X) \\= \bigcup\Big\{ S(w,z): \text{$\{w,z\}\subseteq\X$ such that
    $S(w,z)\cap\mu=\emptyset$ and $U\cap\overline{S(w,z)}\neq\emptyset$} \Big\}.
\end{multline}
%In much of the paper, only a single viewing window $\X$ is considered, and we will abbreviate the 
%set as $\mathcal{R}_s(x; \mu)$.
Intuitively, this set consists of all forbidden regions affected by the addition of a point somewhere in
$U$. The most important case for us is $U=\{x\}$, which we write as
$\mathcal{R}_S(x; \mu;\X)$.

First, we show $\mathcal{R}_S(U; \mu; \X)$ satisfies a monotonicity property in $\mu$.
\begin{lemma}\label{lem:monotonicity}
  If $\mu\subseteq\nu$, then
  \begin{align*}
    \mathcal{R}_S(U; \nu; \X)\subseteq\mathcal{R}_S(U; \mu; \X),
  \end{align*}
  with equality if $\nu\setminus\mu$ lies outside of $\mathcal{R}_S(U; \mu; \X)$.
\end{lemma}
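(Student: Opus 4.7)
My plan is to unpack the definition \eqref{def:calRsubS} directly and exploit the obvious monotonicity of the condition $S(w,z)\cap\mu=\emptyset$ in~$\mu$.

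For the first inclusion, suppose $p\in\mathcal{R}_S(U;\nu;\X)$. By \eqref{def:calRsubS}, there exist $\{w,z\}\subseteq\X$ with $p\in S(w,z)$, $S(w,z)\cap\nu=\emptyset$, and $U\cap\overline{S(w,z)}\neq\emptyset$. Since $\mu\subseteq\nu$, we have $S(w,z)\cap\mu\subseteq S(w,z)\cap\nu=\emptyset$, so the same pair $(w,z)$ witnesses $p\in\mathcal{R}_S(U;\mu;\X)$. The only subtlety here is the purely set-theoretic one just used; no assumption on the forbidden regions is needed.

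For the equality statement, assume $(\nu\setminus\mu)\cap\mathcal{R}_S(U;\mu;\X)=\emptyset$. The inclusion $\mathcal{R}_S(U;\nu;\X)\subseteq\mathcal{R}_S(U;\mu;\X)$ is already established, so I only need to prove the reverse. Take $p\in\mathcal{R}_S(U;\mu;\X)$ and fix $\{w,z\}\subseteq\X$ witnessing membership, with $p\in S(w,z)\subseteq\mathcal{R}_S(U;\mu;\X)$, $S(w,z)\cap\mu=\emptyset$, and $U\cap\overline{S(w,z)}\neq\emptyset$. Since $S(w,z)$ is contained in $\mathcal{R}_S(U;\mu;\X)$, the hypothesis on $\nu\setminus\mu$ gives $S(w,z)\cap(\nu\setminus\mu)=\emptyset$. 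Combining this with $S(w,z)\cap\mu=\emptyset$ yields $S(w,z)\cap\nu=\emptyset$, so the same pair $(w,z)$ witnesses $p\in\mathcal{R}_S(U;\nu;\X)$.

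There is no real obstacle; the lemma is essentially an unpacking of the definition. The only point that requires a sentence of care is that in the equality case one must verify that the entire set $S(w,z)$ (not just the point $p$) sits inside $\mathcal{R}_S(U;\mu;\X)$, which is immediate from the fact that $\mathcal{R}_S(U;\mu;\X)$ is defined as a union over such sets.
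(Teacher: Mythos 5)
Your proof is correct and follows the same reasoning as the paper's: the first inclusion uses only that $S(w,z)\cap\nu=\emptyset$ implies $S(w,z)\cap\mu=\emptyset$ when $\mu\subseteq\nu$, and the equality case uses that $S(w,z)\subseteq\mathcal{R}_S(U;\mu;\X)$ forces $\mu$ and $\nu$ to agree on $S(w,z)$. The only cosmetic difference is that you trace through a point $p$, while the paper argues set-by-set, but these are the same argument.
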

\begin{proof}
  Suppose that $S(w,z)$ satisfies $S(w,z)\cap\nu=\emptyset$ and $U\cap \overline{S(w,z)}\neq\emptyset$.
  Then this forbidden region also satisfies $S(w,z)\cap\mu=\emptyset$, showing that
  $\mathcal{R}_S(U;\nu;\X)\subseteq\mathcal{R}_S(U;\mu;\X)$.
  
  Now, assume that $\nu\setminus\mu$ lies outside of $\mathcal{R}_S(U; \mu; \X)$.
  Suppose that $S(w,z)$ satisfies $S(w,z)\cap\mu=\emptyset$ and $U\cap\overline{S(w,z)}\neq\emptyset$.
  Then $S(w,z)\subseteq \mathcal{R}_S(U;\mu;\X)$, and hence $\mu=\nu$ on $S(w,z)$.
  This implies that $S(w,z)\cap \nu=\emptyset$, which means that $S(w,z)\subseteq\mathcal{R}_S(U;\nu;\X)$.
  Therefore $\mathcal{R}_S(U;\mu;\X)\subseteq\mathcal{R}_S(U;\nu;\X)$, proving the two sets equal.
\end{proof}

  Now we consider the relation between $\mathcal{R}_S(U;\mu;\X)$ and the graphs $\G(\mu)$ and $\G(\mu \cup \{x\})$.
  Let $E^+_x(\mu)$ denote the edges found in $\G(\mu\cup\{x\})$ but not in $\G(\mu)$, and let
  $E^-_x(\mu)$ denote the edges found in $\G(\mu)$ but not in $\G(\mu\cup\{x\})$, that is
 \bea \label{def:Epm}
 E_x^+ (\mu)=\G(\mu \cup \{x\}) \setminus \G(\mu) \qmq{and} E_x^-(\mu)=\G(\mu) \setminus \G(\mu \cup \{x\}).
 \ena
  \begin{lemma}\label{lem:same.change.graph}
    Suppose that $\mu$ and $\nu$ are
    supported on some bounded measurable windows $\X_1$ and $\X_2$, respectively, and that $U\subseteq \X_1\cap \X_2$.
    If $\mathcal{R}_S(U;\mu;\X_1)=\mathcal{R}_S(U;\nu;\X_2)$ and $\mu$ and $\nu$ agree on the closure of this set, then
    $E^{\pm}_x(\mu)=E^{\pm}_x(\nu)$ for any $x\in U$.
  \end{lemma}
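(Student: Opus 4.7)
\textbf{Proof plan for Lemma~\ref{lem:same.change.graph}.}

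The plan is to show that every edge in $E^{\pm}_x(\mu)$ is ``witnessed'' entirely inside (the closure of) $\mathcal{R}_S(U;\mu;\X_1)$, in the sense that both the relevant forbidden region and its endpoints lie there, and that its emptiness (or not) with respect to $\mu$ is decided inside that set. Then the hypothesis $\mathcal{R}_S(U;\mu;\X_1)=\mathcal{R}_S(U;\nu;\X_2)$, together with $\mu=\nu$ on the closure, lets us transfer the edge verbatim to $\nu$, and symmetry gives the reverse inclusion.

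Concretely, I would argue as follows. First fix $x\in U$ and a putative edge $\{x,y\}\in E^+_x(\mu)$. By \eqref{def:Epm} and the definition of $\G(\mu\cup\{x\})$, this means $y\in\mu$ with $y\neq x$ and $S(x,y)\cap(\mu\cup\{x\})=\emptyset$, which by \eqref{eq:S.basic.prop} reduces to $S(x,y)\cap\mu=\emptyset$. Since $x\in U\cap\overline{S(x,y)}$, $\{x,y\}\subseteq\X_1$, and $S(x,y)\cap\mu=\emptyset$, the definition \eqref{def:calRsubS} gives $S(x,y)\subseteq\mathcal{R}_S(U;\mu;\X_1)$; and $y\in\overline{S(x,y)}\subseteq\overline{\mathcal{R}_S(U;\mu;\X_1)}$. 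Using the hypothesis, $y$ lies in $\nu$ as well (and so in $\X_2$), and $S(x,y)\cap\nu=S(x,y)\cap\mu=\emptyset$. Hence $\{x,y\}\in E^+_x(\nu)$.

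For $E^-_x(\mu)$, take $\{w,z\}\in\G(\mu)\setminus\G(\mu\cup\{x\})$; then $\{w,z\}\subseteq\mu$, $S(w,z)\cap\mu=\emptyset$, and the edge can only have been destroyed by $x\in S(w,z)$. In particular $U\cap\overline{S(w,z)}\neq\emptyset$, so again $S(w,z)\subseteq\mathcal{R}_S(U;\mu;\X_1)$ and $\{w,z\}\subseteq\overline{\mathcal{R}_S(U;\mu;\X_1)}$. The hypothesis then gives $\{w,z\}\subseteq\nu$ and $S(w,z)\cap\nu=\emptyset$, so $\{w,z\}\in\G(\nu)$; since $x\in S(w,z)$ still holds, the same edge is destroyed upon adding $x$ to $\nu$, placing it in $E^-_x(\nu)$. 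Reversing the roles of $\mu$ and $\nu$, which is legitimate since the hypothesis is symmetric in them, gives the opposite inclusions and completes the proof.

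I do not expect any real obstacle: the argument is essentially a bookkeeping check that $\mathcal{R}_S(U;\mu;\X_1)$ is large enough to contain every forbidden region relevant to edges created or destroyed at a point of $U$. The only subtleties to watch for are (i) that the endpoints of relevant edges are in $\overline{\mathcal{R}_S}$ rather than in $\mathcal{R}_S$ itself, which is why the hypothesis is stated on the closure, and (ii) that $x\notin S(x,y)$ by \eqref{eq:S.basic.prop}, so adding $x$ does not preempt the $S(x,y)\cap\mu=\emptyset$ condition for the new edges in $E^+_x(\mu)$.
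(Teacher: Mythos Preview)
Your proposal is correct and follows essentially the same argument as the paper's own proof: both show each edge of $E^{\pm}_x(\mu)$ has its forbidden region contained in $\mathcal{R}_S(U;\mu;\X_1)$ and its endpoints in the closure, then use the hypothesis to transfer the edge to $\nu$ and conclude by symmetry. Your treatment is slightly more explicit about why $x\notin S(x,y)$ and why the endpoints land in the closure, but the route is the same.
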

  \begin{proof}
    Suppose that $x\in U$ and $\{x,y\}\in E^+_x(\mu)$. 
    Then, by $\{x,y\}\subseteq \X_1$, $S(x,y)\cap\mu=\emptyset$ and \eqref{eq:S.basic.prop}, we have $S(x,y)\subseteq\mathcal{R}_S(U;\mu;\X_1)=\mathcal{R}_S(U;\nu;\X_2)$.
  Again by \eqref{eq:S.basic.prop} the closure of this set contains $y$, and $\mu$ and $\nu$
    agree on it. Thus $y\in\nu$ and $S(x,y)\cap\nu=\emptyset$, implying that
    $\{x,y\}\in E^+_x(\nu)$.
    Therefore $E^+_x(\mu)\subseteq E^+_x(\nu)$. By symmetry, the opposite inclusion holds as well.
    
   Now suppose that $x \in U$ and $\{w,z\} \in E_x^-(\mu)$. As $\{w,z\} \in \G(\mu)$ we have $\{w,z\} \subseteq \X_1$ and $S(w,z) \cap \mu =\emptyset$, and as $\{w,z\} \not \in \G(\mu \cup \{x\})$ we must have $x \in S(w,z)$, so that $U \cap \overline{S(w,z)} \supseteq \{x\} \not = \emptyset$.  Hence $S(w,z) \subseteq \mathcal{R}_S(U;\mu;\X_1)$, and so is also a subset of $\mathcal{R}_S(U;\nu;\X_2)$. As $\nu$ agrees with $\mu$ on the closure of this set we have $\{w,z\} \subseteq \X_2$ and $S(w,z) \cap \nu = \emptyset$, so $\{w,z\} \in \G(\nu)$. As $\{w,z\} \not \in \G(\mu \cup \{x\})$ we have $x \in S(w,z)$, and therefore $S(w,z) \cap (\nu \cup \{x\})=\{x\}$. Hence $\{w,z\} \not \in \G(\nu \cup \{x\})$, showing $E_x^-(\mu) \subseteq E_x^-(\nu)$. By symmetry, the opposite inclusion also holds. 
  \end{proof}

Next, for $U\subseteq\X$ and $\mu$ supported on $\X$, define
\begin{align} 
R_S(U;\mu;\X) =\sup\big\{\norm{y-x}: y\in\mathcal{R}_S(U;\mu;\X),\,x\in U\big\},\label{def:Rs}
\end{align}
writing this quantity as $R_S(x;\mu;\X)$ if $U=\{x\}$. The next lemma shows that $R_S(x;\mu;\X)$ is {  a stabilization radius.}
\begin{lemma} \label{lem:stabilization}
The radius $R_S(U;\mu;\X)$ given in \eqref{def:Rs} is stabilizing in the sense of \eqref{eq:stabR} for $L(\mu)$, the statistic defined in
\eqref{eq:defLalpha}. 
That is,  
\begin{align*}
D_{x,y}^{2}L(\mu)=0 \quad \mbox{for all $\{x,y\} \subseteq \mathbb{X}$ with $x\in U$ and
$\|y-x\| > R_S(U;\mu;\X)$.}
\end{align*}
Furthermore, for $x\in U$ and $\{x_1,\ldots,x_n\}\subseteq\X$ satisfying $\norm{x_i-x}>R_S(U;\mu;\X)$,
\begin{align}\label{eq:add.points}
D_xL(\mu)=D_xL(\mu\cup\{x_1,\ldots x_n\}).
\end{align}

\end{lemma}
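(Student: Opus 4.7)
The plan is to turn both claims into statements about the edge-sets $E_x^\pm$ of \eqref{def:Epm}, to which Lemmas~\ref{lem:monotonicity} and~\ref{lem:same.change.graph} can be applied directly. Splitting $\G(\mu\cup\{x\})$ and $\G(\mu)$ into their common edges together with the exclusive sets $E_x^+(\mu)$ and $E_x^-(\mu)$, and summing $\psi$ over each piece, gives the basic identity
\begin{align*}
D_xL(\mu)=\sum_{e\in E_x^+(\mu)}\psi(e)-\sum_{e\in E_x^-(\mu)}\psi(e),
\end{align*}
so it will suffice to show that $E_x^{\pm}(\mu)=E_x^{\pm}(\nu)$ for $\nu=\mu\cup\{y\}$ in part~(1) and $\nu=\mu\cup\{x_1,\dots,x_n\}$ in part~(2).

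The reduction to Lemmas~\ref{lem:monotonicity} and~\ref{lem:same.change.graph} rests on a simple geometric observation: for $x\in U$ and any $z\in\X$ with $\norm{z-x}>R_S(U;\mu;\X)$, we have $z\notin\overline{\mathcal{R}_S(U;\mu;\X)}$. Indeed, by the defining supremum in \eqref{def:Rs} every $z'\in\mathcal{R}_S(U;\mu;\X)$ lies in the closed ball $B(x,R_S(U;\mu;\X))$, so the same inclusion holds for the closure, and the strict inequality $\norm{z-x}>R_S(U;\mu;\X)$ places $z$ strictly outside that ball.

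For part~(1), applying the observation with $z=y$ shows that $y\notin\overline{\mathcal{R}_S(U;\mu;\X)}$; in particular $y\notin\mathcal{R}_S(U;\mu;\X)$, so Lemma~\ref{lem:monotonicity} gives $\mathcal{R}_S(U;\mu\cup\{y\};\X)=\mathcal{R}_S(U;\mu;\X)$, and $\mu$ and $\mu\cup\{y\}$ agree on the closure of this common set since $y$ is not in it. Lemma~\ref{lem:same.change.graph}, with $\X_1=\X_2=\X$, then yields $E_x^\pm(\mu)=E_x^\pm(\mu\cup\{y\})$ and therefore $D_xL(\mu)=D_xL(\mu\cup\{y\})$, which rearranges to $D_{x,y}^2L(\mu)=0$. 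For part~(2) the same argument applies with $\nu=\mu\cup\{x_1,\ldots,x_n\}$: each $x_i$ lies outside $\overline{\mathcal{R}_S(U;\mu;\X)}$ by the observation, so $\nu\setminus\mu$ is disjoint from $\mathcal{R}_S(U;\mu;\X)$ and $\mu$ agrees with $\nu$ on its closure; the two lemmas again produce $E_x^\pm(\mu)=E_x^\pm(\nu)$, yielding \eqref{eq:add.points}.

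The only step that requires any care is the geometric observation, where the strict inequality in the hypothesis is essential in order to exclude $z$ from the \emph{closure} of $\mathcal{R}_S(U;\mu;\X)$ rather than merely from the set itself; this exclusion is precisely what Lemma~\ref{lem:same.change.graph} needs in order to conclude that the edge sets $E_x^\pm$ are preserved.
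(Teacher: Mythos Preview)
Your proof is correct and, for the first claim, essentially identical to the paper's: both reduce to $E_x^{\pm}(\mu)=E_x^{\pm}(\mu\cup\{y\})$ via Lemmas~\ref{lem:monotonicity} and~\ref{lem:same.change.graph} after observing that $y\notin\overline{\mathcal{R}_S(U;\mu;\X)}$.

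For \eqref{eq:add.points} there is a small but genuine difference worth noting. The paper argues iteratively: it applies the first claim to add $x_1$, then invokes the monotonicity $R_S(U;\mu\cup\{x_1\};\X)\le R_S(U;\mu;\X)$ from Lemma~\ref{lem:monotonicity} to ensure the hypothesis $\norm{x_2-x}>R_S$ still holds for the enlarged configuration, and repeats. You instead go in one shot, observing that all the $x_i$ lie outside $\overline{\mathcal{R}_S(U;\mu;\X)}$ simultaneously, so the equality case of Lemma~\ref{lem:monotonicity} and then Lemma~\ref{lem:same.change.graph} apply directly with $\nu=\mu\cup\{x_1,\ldots,x_n\}$. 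Your route is slightly cleaner, as it avoids the induction and the auxiliary use of the monotonicity of $R_S$; the paper's route has the mild advantage of making \eqref{eq:add.points} an immediate corollary of the first claim rather than a parallel argument.
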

\begin{proof}
  Assume that $x\in U$ and $\|y-x\| > R_S(U;\mu;\X)$. We need to show
  that $D_x L(\mu\cup\{y\}) = D_xL(\mu)$. To do so, we will show that
  $E^{\pm}_x(\mu\cup\{y\})=E^{\pm}_x(\mu)$.
  Since $\|y-x\| > R_S(U;\mu;\X)$, the point $y$ lies outside of $\overline{\mathcal{R}_S(U;\mu;\X)}$.
  By Lemma~\ref{lem:monotonicity}, $\mathcal{R}_S(U;\mu\cup\{y\};\X) = \mathcal{R}_S(U;\mu;\X)$.
  On the closure of this set, $\mu$ and $\mu\cup\{y\}$ agree, and so applying
  Lemma~\ref{lem:same.change.graph} with $\nu=\mu\cup\{y\}$ and $\X_1=\X_2=\X$ yields the first conclusion.
  
Now, we will repeatedly apply this first conclusion to establish \eqref{eq:add.points}.
  Applying it once shows that since $\norm{x_1-x}>R_S(U;\mu;\X)$,
\begin{align*}
D_xL(\mu)= D_xL(\mu\cup\{x_1\}).
\end{align*}
By Lemma~\ref{lem:monotonicity}, we have $R_S(U;\mu\cup\{x_1\};\X)\leq 
R_S(U;\mu;\X)$. Thus applying the first claim again yields
\begin{align*}
D_xL(\mu\cup\{x_1\})=D_xL(\mu\cup\{x_1,x_2\}).
\end{align*}
Repeating this argument proves \eqref{eq:add.points}.
\end{proof}

To prove that our stabilization radius has exponential tails under Poisson or binomial input,
the rough idea is that if the stabilization radius is large, then there must be a large ball
empty of points of $\mu$.
\begin{lemma}\label{lem:emptyspace}
  Assume that  $\X$ and the collection of forbidden regions $S(x,y)$ satisfy the scaled ball condition (Assumption~\ref{assumption:scaledball}) with $\delta>0$, and let $\mu$ be supported on $\X$. If for some ${  u\in\X,}  r \ge 0$ and $0<r_1<r_2$ we have $B(u,r)\subseteq \X$ and $0<r_1< R_S(B(u,r);\mu;\X)\leq r_2$, then with ${\cal D}$ the normalized diameter in \eqref{eq:def.diameter}, there exists a ball of radius $(r_1-2r)\delta /{\cal D}$ lying within
  $B(u,r_2)\cap\X$ that contains no points of $\mu$. 
\end{lemma}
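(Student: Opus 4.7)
The plan is to unpack the definition of $R_S(B(u,t);\mu;\X)$ and locate a single forbidden region $S(w,z)$ that is large (because the stabilization radius exceeds $r_1$) and empty of $\mu$ (by construction of $\mathcal{R}_S$), then invoke the scaled ball condition to extract the desired empty ball.

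First, since $R_S(B(u,t);\mu;\X)>r_1$, the supremum in \eqref{def:Rs} exceeds $r_1$, so I can select a forbidden region $S(w,z)$ with $\{w,z\}\subseteq\X$, $S(w,z)\cap\mu=\emptyset$, and $B(u,t)\cap\overline{S(w,z)}\neq\emptyset$, together with points $y\in S(w,z)$ and $x\in B(u,t)$ satisfying $\|y-x\|>r_1$. Fix any $p\in B(u,t)\cap\overline{S(w,z)}$. Since $x,p\in B(u,t)$ we have $\|x-p\|\leq 2t$, so the triangle inequality yields $\|y-p\|\geq r_1-2t$.

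Next, I estimate $\|w-z\|$ from below. Because $y\in S(w,z)$ and $p\in\overline{S(w,z)}$, applying the normalized diameter bound in \eqref{eq:def.diameter} (which passes to the closure by approximating $p$ with points of $S(w,z)$ and taking limits) gives $\|y-p\|\leq \mathcal{D}\,\|w-z\|$, hence $\|w-z\|\geq (r_1-2t)/\mathcal{D}$. The scaled ball condition (Assumption~\ref{assumption:scaledball}) now guarantees that $S(w,z)\cap\X$ contains a ball $B$ of radius $\delta\|w-z\|\geq\delta(r_1-2t)/\mathcal{D}$. By the defining property of $\mathcal{R}_S(B(u,t);\mu;\X)$, the set $S(w,z)$ is contained in $\mathcal{R}_S(B(u,t);\mu;\X)$, so $B\subseteq\mathcal{R}_S(B(u,t);\mu;\X)\cap\X$.

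Finally, I use the upper bound $R_S(B(u,t);\mu;\X)\leq r_2$ together with $u\in B(u,t)$: taking $x=u$ in the definition \eqref{def:Rs} shows that every point of $\mathcal{R}_S(B(u,t);\mu;\X)$ is within distance $r_2$ of $u$, so $\mathcal{R}_S(B(u,t);\mu;\X)\subseteq B(u,r_2)$. Hence $B\subseteq B(u,r_2)\cap\X$, and since $S(w,z)\cap\mu=\emptyset$ the ball $B$ contains no points of $\mu$, which is exactly the conclusion.

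The argument is almost entirely a bookkeeping exercise on the definitions. The only minor subtlety I expect is ensuring the diameter bound applies to the pair $(y,p)$ even though $p$ lies only in $\overline{S(w,z)}$ rather than $S(w,z)$ itself, but this is immediate by a density/limit argument from \eqref{eq:def.diameter}.
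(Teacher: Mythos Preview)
Your argument is correct and follows essentially the same route as the paper's proof: extract a single forbidden region $S(w,z)$ witnessing the lower bound on the stabilization radius, use the triangle inequality with a point of $B(u,t)\cap\overline{S(w,z)}$ to get $\|w-z\|\geq (r_1-2t)/\mathcal{D}$, apply the scaled ball condition, and use the upper bound $R_S\leq r_2$ to confine the ball to $B(u,r_2)\cap\X$. Your explicit mention of the density argument for applying the diameter bound to $p\in\overline{S(w,z)}$ is a detail the paper glosses over, but otherwise the two proofs are the same.
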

\begin{proof}
  Since $R_S(B(u,r);\mu;\X)> r_1$, there exist $\{w,z\}\subseteq \X$
  such that
  
  \begin{itemize}
    \item $S(w,z)$ contains no points of $\mu$;
    \item $\overline{S(w,z)}$ contains some point of $B(u,r)$;
    \item and there exists $y\in S(w,z)$ and $x\in B(u,r)$
      with $\norm{y-x}>r_1$.
  \end{itemize}
  \noindent The diameter of $S(w,z)$ is then greater than $r_1-2r$ by the 
  triangle inequality, and by the definition
  of the normalized diameter~${\cal D}$, we have $\norm{z-w}>(r_1-2r)/{\cal D}$. 
  By the scaled ball condition, $S(w,z)\cap\X$ contains a ball of radius $\delta (r_1-2r)/{\cal D}$.
  Since $R_S(B(u,r);\mu;\X) \leq r_2$, the set $S(w,z)$ is contained within $B(u,r_2)$ (in fact,
  it is contained in $B(u,r_2-r)$, but we will not need this fact), and so
  the ball is also contained within $B(u,r_2)\cap \X$. By virtue of being a subset of
  $S(w,z)$, the ball contains no points of $\mu$.
\end{proof}

Using Lemma~\ref{lem:emptyspace} we now show our stabilization radius has exponential tails. 
\begin{proposition}\label{prop:Rs.exponential.bound}
If the scaled ball condition (Assumption~\ref{assumption:scaledball}) holds for $\delta>0$, and 
$\eta_t$ satisfies Assumption~\ref{assumption:PoissonDom} with $c_\lambda>0$, then for any { $x\in\X,$}
$0\leq\epsilon < 1/2$ and $r$ such that $B(x,\epsilon r)  \subseteq \X$,
  \begin{align}
    \P(R_{S}(B(x,\epsilon r);\eta_t;\X) \geq r)\leq C (1-2\epsilon)^{-d}\exp(-c_\lambda \kappa tr^{d}) \qm{for all $r > 0$}\label{eq:Rs.exponential.bound.ball}
  \end{align}
  with $\kappa=((1-2\epsilon)\delta/{\cal D}\sqrt{d})^d$, and $C$ a constant that depends only on $d$, ${\cal D}$, 
  and $\delta$. In particular,
  \begin{align}
    \P(R_{S}(x;\eta_t;\X) \geq r)\leq C\exp(-c_\lambda \kappa tr^{d}) \qm{for all $r > 0$.}\label{eq:Rs.exponential.bound}
  \end{align}
  
\end{proposition}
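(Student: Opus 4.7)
The plan is to apply Lemma~\ref{lem:emptyspace} dyadically, converting ``$R_S$ is large'' into ``there is a large empty ball contained in $\X$'', and then to union-bound the latter over a fixed lattice of cubes. Set $U = B(x, \epsilon r)$ and $R = R_S(U; \eta_t; \X)$, and decompose
\[
\{R > r\} \;=\; \bigsqcup_{k \geq 0} A_k, \qquad A_k := \{2^k r < R \leq 2^{k+1} r\};
\]
only finitely many $A_k$ are nonempty since $R \leq (1+\mathcal{D})\,\mathrm{diam}(\X)$. On $A_k$, Lemma~\ref{lem:emptyspace} with $r_1 = 2^k r$ and $r_2 = 2^{k+1} r$ (recall $B(x,\epsilon r)\subseteq\X$ is given) supplies an empty ball $B_k \subseteq B(x, 2^{k+1} r) \cap \X$ of radius
\[
\rho_k := (2^k - 2\epsilon)\, r \delta/\mathcal{D} \;\geq\; (1-2\epsilon)\, 2^k\, r \delta/\mathcal{D},
\]
where the inequality uses $\epsilon < 1/2$ and $k \geq 0$.

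To derandomize, I tile $\mathbb{R}^d$ by closed axis-aligned cubes of side $s_k := \rho_k/\sqrt{d}$. The ball $B_k$ contains an inscribed cube of side $2 s_k$, and any axis-aligned cube of side $2 s_k$ contains at least one full lattice cube (every interval of length $2 s_k$ covers an entire lattice interval of length $s_k$); so on $A_k$ at least one lattice cube $Q$ meeting $B(x, 2^{k+1} r)$ lies inside $B_k$ and is empty of $\eta_t$. The number of candidate cubes is
\[
N_k \;\leq\; C_d (2^{k+1} r/s_k)^d \;\leq\; C(d, \mathcal{D}, \delta)\,(1-2\epsilon)^{-d},
\]
and each $Q \subseteq B_k \subseteq \X$ has full volume $s_k^d$, so by Assumption~\ref{assumption:PoissonDom}, using the Poisson void formula (or $(1-p)^t \leq e^{-pt}$ for binomial input),
\[
\P(\eta_t \cap Q = \emptyset) \;\leq\; \exp(-c_\lambda\, t\, s_k^d).
\]
Since $s_k^d = \rho_k^d/d^{d/2} \geq \kappa\, r^d\, 2^{kd}$ with $\kappa = ((1-2\epsilon)\delta/(\mathcal{D}\sqrt{d}))^d$, a union bound yields
\[
\P(A_k) \;\leq\; C(d,\mathcal{D},\delta)\,(1-2\epsilon)^{-d} \exp(-c_\lambda\, \kappa\, t\, r^d\, 2^{kd}).
\]

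Summing over $k$ with $a := c_\lambda \kappa t r^d$ reduces the problem to bounding $\sum_{k \geq 0} e^{-a\,2^{kd}}$ by a constant multiple of $e^{-a}$: for $a \geq 1$ this follows from $\sum_{k \geq 1} e^{-a(2^{kd}-1)} \leq \sum_{k \geq 1} e^{-k} < \infty$ (using $2^{kd} - 1 \geq k$ for $d \geq 1, k \geq 1$), while for $a < 1$ it follows from the trivial estimate $\P(R > r) \leq 1 \leq e\, e^{-a}$. This proves \eqref{eq:Rs.exponential.bound.ball} for $\P(R > r)$; the bound for $\P(R \geq r)$ follows by left-continuity of $r' \mapsto \P(R > r')$. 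Finally, specializing to $\epsilon = 0$ and $U = \{x\}$ (for which $B(x,0) = \{x\} \subseteq \X$ reduces to $x \in \X$) recovers \eqref{eq:Rs.exponential.bound}.

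The only delicate step is geometric: using a lattice of cubes of side $\rho_k/\sqrt{d}$ together with the inscribed cube of side $2\rho_k/\sqrt{d}$ of the random empty ball produced by Lemma~\ref{lem:emptyspace} guarantees simultaneously that (a) some lattice cube sits entirely inside $B_k \subseteq \X$, so its intensity is not lost to boundary effects, and (b) the cube volume $\rho_k^d/d^{d/2}$ delivers exactly the $\sqrt{d}$ factor appearing in $\kappa$. A ball-based covering would produce a worse dimensional constant in the exponent.
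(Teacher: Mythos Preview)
Your proof is correct and follows essentially the same strategy as the paper's: dyadic decomposition of $\{R_S > r\}$, Lemma~\ref{lem:emptyspace} to produce a large empty ball in $\X$, a lattice of cubes of side $\rho_k/\sqrt d$ to pin down a deterministic empty cell, and a geometric-type tail sum. Your version is in fact slightly cleaner than the paper's in one respect: you keep $U=B(x,\epsilon r)$ fixed throughout the dyadic scales, so Lemma~\ref{lem:emptyspace} is always invoked with $t=\epsilon r$ (requiring only the given hypothesis $B(x,\epsilon r)\subseteq\X$), whereas the paper states its intermediate bound \eqref{eq:latticebound} for $R_S(B(x,\epsilon s);\cdot)$ with $s=2^i r$, which on its face would need $B(x,\epsilon\,2^i r)\subseteq\X$---a harmless issue since replacing $\epsilon s$ by the smaller $\epsilon r$ only enlarges the empty ball, but one your formulation avoids entirely.
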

\begin{proof}
  Let $\pi_d$ be the volume of the $d$-dimensional ball of radius~$1$. 
  First, we show that for any $s>0$ and $0\leq\epsilon<1/2$,
  \begin{align} \label{eq:latticebound}
    \P[s< R_S(B(x,\epsilon s);\mu;\X)\leq 2s] &\leq  \biggl(\frac{2{\cal D}\sqrt{d}}{(1-2\epsilon)\delta}\biggr)^d\pi_d 
        \exp\bigl(-c_\lambda \kappa ts^d\bigr).
  \end{align}
  To prove this claim, suppose that $s < R_S(B(x,\epsilon s);\mu;\X)\leq 2s$ and apply Lemma~\ref{lem:emptyspace}
  to conclude that there exists a ball of radius $(1-2\epsilon)\delta s/{\cal D}$ within $B(x,2s)\cap\X$
  containing no points of $\mu$.
  Now, consider the lattice $((1-2\epsilon)\delta sd^{-1/2}/{\cal D})\mathbb{Z}^d$. 
  By a volume argument, $B(x,2s)\cap\X$ contains at most
  \begin{align*}
    \frac{\vol{B(0,2s)}}{((1-2\epsilon)\delta s d^{-1/2}/{\cal D})^d}=
      \biggl(\frac{2{\cal D}\sqrt{d}}{(1-2\epsilon)\delta}\biggr)^d\pi_d
  \end{align*}
  lattice cells. Any ball of radius $(1-2\epsilon)\delta s/{\cal D}$ contains a cell of this
  lattice. 
  
  In all, we have shown that if $s < R_S(B(x,\epsilon s);\mu;\X)\leq 2s$, then at least one of the 
  at most $(2 {\cal D}\sqrt{d}/(1-2\epsilon)\delta)^d\pi_d$ lattice cells within $B(x,2s)\cap \X$ contains no point of $\mu$.
  With binomial input, applying Assumption \ref{assumption:PoissonDom}, 
  the probability of a single cell being empty is bounded by
 \begin{align} \label{eq:bin.empty.cell}
 \Biggl[ 1- c_\lambda \biggl(\frac{(1-2\epsilon)\delta }{{\cal D}\sqrt{d}}\biggr)^ds^d\Biggr]^{t} 
 &\leq \exp\Biggl[-c_\lambda \biggl(\frac{(1-2\epsilon)\delta}{{\cal D}\sqrt{d}}\biggr)^dts^d\Biggr].
 \end{align} 
 With Poisson input, each lattice cell contains no point of $\mu$ with probability at most the right hand side of 
 \eqref{eq:bin.empty.cell}.
A union bound now proves \eqref{eq:latticebound}. 
  
Now consider $r>0$, arbitrary. If $\exp\bigl(-c_\lambda \kappa t r^d\bigr)>1/2$, then \eqref{eq:Rs.exponential.bound.ball} is trivially satisfied with $C=2$. Otherwise, applying a union bound using \eqref{eq:latticebound} with $s=r,2r,4r,\ldots$ gives
  \begin{align*}
    \P[R_S(B(x,\epsilon r);\mu;\X)>r] &\leq \biggl(\frac{2{\cal D}\sqrt{d}}{(1-2\epsilon)\delta}\biggr)^d\pi_d \sum_{i=0}^{\infty}
       \exp\bigl(-c_\lambda \kappa t (2^ir)^d\bigr).
  \end{align*}
Using $\exp\bigl(-c_\lambda \kappa t r^d\bigr)\leq 1/2$, inequality 
\eqref{eq:Rs.exponential.bound} may now be established
  by bounding the sum in the above inequality
  by a geometric series summing to $2\exp\bigl(-c_\lambda \kappa t r^d\bigr)$.
\end{proof}

\section{Functionals of forbidden regions graphs satisfy a Berry-Esseen bound}
In this section we let $\Ppp{t}$ be a Poisson process with intensity $\lambda_t=t\lambda, t \ge 1$ for some fixed probability measure $\lambda$ on $\X$, and we prove the Poisson input case of Theorem \ref{thm:main}.
For a functional $F_t$ on $\Ppp{t}$ with finite, non-zero variance, {  recall that} \beas
{\widetilde F}_t=(F_t-\E F_t)/\sqrt{{\rm Var}(F_t)}.
\enas

\begin{proposition}[Proposition 1.4, Last, Peccati and Schulte \cite{last2014normal}]\label{prop1.3LPS}
Let $\E F_t^2<\infty, t \ge 1$, and assume there are finite positive constants $p_1,p_2>0$ and $c$ such that
\bea \label{eq1.6LPS}
\E|D_xF_t|^{4+p_1} \le c \qm{$\lambda$-a.e. $x \in \X, t \ge 1$}
\ena
and
\bea \label{eq1.7LPS}
\E|D_{x,y}^2F_t|^{4+p_2} \le c \qm{$\lambda^2$-a.e. $(x,y) \in \X^2, t \ge 1$.}
\ena

Moreover, assume that for some $v>0$
\bea \label{eq:var.lower.bound}
\frac{{\rm Var}(F_t)}{t} \ge v \qm{for all $t \ge 1$,}
\ena
and that
\bea \label{eq1.8LPS}
m:=\sup_{x \in \X, t \ge 1} \int \P (D_{x,y}^2 F_t \not = 0)^{p_2/(16+4p_2)} \lambda_t (dy) <\infty.
\ena
Then there exists a finite constant $C$, depending only on $c,p_1,p_2,v,m$ and $\lambda(\X)$ such that with $d(\cdot,\cdot)$ denoting either the Wasserstein or Kolmogorov distance and $N$ a standard Gaussian random variable,
\beas
d({\widetilde F}_t,N) \le C t^{-1/2} \qm{for all $t \ge 1$.}
\enas
\end{proposition}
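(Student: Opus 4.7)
The plan is to apply the Malliavin--Stein method for Poisson functionals, following the approach pioneered by Peccati, Sol\'e, Taqqu and Utzet and refined by Last, Peccati and Schulte. For a centered square-integrable functional $F$ of $\eta_t$ in the domain of the Malliavin derivative, Stein's method yields
\begin{align*}
d_W(F,N) \le \E\bigl|1-\langle DF,-DL^{-1}F\rangle_{L^2(\lambda_t)}\bigr| + \int_{\X}\E\bigl[(D_xF)^2|D_xL^{-1}F|\bigr]\,\lambda_t(dx),
\end{align*}
with $L^{-1}$ the pseudo-inverse of the Ornstein--Uhlenbeck generator; the Kolmogorov distance admits an analogous representation with an additional but comparable term. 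Using the Mehler-type formula for $L^{-1}$ together with the product formula on Poisson Fock space, both $\Var\langle DF,-DL^{-1}F\rangle$ and the remainder can be controlled by quantities involving only $D_xF$ and $D^2_{x,y}F$.

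The first step I would carry out is to reduce the Stein bound to three integrals of the schematic form
\begin{align*}
\gamma_1 &= \int_{\X^3}\!\bigl(\E(D^2_{x_1,x_3}F)^4\,\E(D^2_{x_2,x_3}F)^4\bigr)^{1/2}\lambda_t^3(d(x_1,x_2,x_3)),\\
\gamma_2 &= \int_{\X^3}\!\bigl(\E(D^2_{x_1,x_3}F_t)^4\,\E(D^2_{x_2,x_3}F_t)^4\bigr)^{1/2}\mathbf{1}\{A(x_1,x_2,x_3)\}\,\lambda_t^3(d\cdot),\\
\gamma_3 &= \int_{\X}\E|D_xF_t|^3\,\lambda_t(dx),
\end{align*}
each entering the bound through a square root or at worst linearly. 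The covariance identity $\Var F_t = \int \E[(D_xF_t)^2]\,\lambda_t(dx) - \tfrac12 \int\int \E[(D^2_{x,y}F_t)^2](\cdots)$ expresses the variance itself in terms of derivatives, so after normalizing by $\sqrt{\Var F_t}$ everything is measured on a common scale.

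The second step is to bound each $\gamma_i$ by splitting moments and indicators with H\"older's inequality. For the key factor one writes
\begin{align*}
\E(D^2_{x,y}F_t)^4 \le \bigl(\E|D^2_{x,y}F_t|^{4+p_2}\bigr)^{4/(4+p_2)}\P(D^2_{x,y}F_t\neq 0)^{p_2/(4+p_2)},
\end{align*}
and the exponent $p_2/(16+4p_2)$ appearing in \eqref{eq1.8LPS} is exactly what emerges after taking the outer square root in $\gamma_1,\gamma_2$; hypothesis \eqref{eq1.7LPS} controls the moment factor uniformly, and \eqref{eq1.8LPS} makes the remaining $y$-integral finite. A similar H\"older splitting using \eqref{eq1.6LPS} handles $\gamma_3$. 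Each $\gamma_i$ is then seen to grow at most linearly in $t$ through $\lambda_t(\X)=t\lambda(\X)$, while $\Var F_t\ge vt$ by \eqref{eq:var.lower.bound}, so the ratios entering $d(\widetilde F_t,N)$ decay like $t^{-1/2}$.

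The main obstacle is the careful bookkeeping of H\"older exponents: the specific $4+p_i$ moments and the peculiar exponent $p_2/(16+4p_2)$ in \eqref{eq1.8LPS} are tuned so that the product of three H\"older factors (two copies of $D^2F_t$, one indicator, one ambient moment) exhausts each assumption exactly once, while leaving enough integrability in the free variable to make the $\lambda_t$-integral finite. Ensuring that every $t$-power bookkept under $\sqrt{\Var F_t}$ cancels correctly, and that the Kolmogorov case does not lose a power of $t$ relative to the Wasserstein case, is the delicate part; the rest is the standard Malliavin--Stein machinery for Poisson input.
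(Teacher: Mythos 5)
This statement is quoted verbatim from Last, Peccati and Schulte (their Proposition 1.3), and the paper offers no proof of its own --- its ``proof'' is the citation. So the comparison is really between your sketch and the argument in [LPS]. You have correctly identified the method: the Malliavin--Stein bound via $\langle DF,-DL^{-1}F\rangle$, the Mehler-formula control of $L^{-1}$, the reduction to a handful of integrated moment quantities (a second-order Poincar\'e inequality), H\"older splitting against the moment hypotheses \eqref{eq1.6LPS}--\eqref{eq1.7LPS}, and normalization by $\var F_t \ge vt$. The intuition you give for why the exponent $p_2/(16+4p_2)$ appears is also on target.

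That said, the details are impressionistic in places where [LPS] is precise. Your $\gamma_1$ and $\gamma_2$ are written almost identically (differing only by an undefined indicator $\mathbf{1}\{A(x_1,x_2,x_3)\}$), whereas in [LPS] they are genuinely different objects: their $\gamma_1$ couples \emph{first}-order increments $D_{x_1}F,D_{x_2}F$ with the second-order ones $D^2_{x_1,x_3}F,D^2_{x_2,x_3}F$, while $\gamma_2$ involves only second-order increments. Missing the first-order factors in $\gamma_1$ means you would not actually consume hypothesis \eqref{eq1.6LPS} where it is needed, and the power-of-$t$ count (which you gesture at but do not carry out) would not close. Also, the Kolmogorov case in [LPS] requires three additional integrated quantities $\gamma_4,\gamma_5,\gamma_6$ beyond the Wasserstein ones, so your claim that Kolmogorov has ``an additional but comparable term'' and otherwise follows from the same bookkeeping is an understatement; those extra terms need their own (similar but not identical) H\"older work. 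None of this is a wrong \emph{approach} --- it is the approach of [LPS] --- but as written it is an outline with a material inaccuracy in the key functionals rather than a proof.
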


We first prove Lemma \ref{lem:DxF.bound.in.alpha}, a bound on the derivative of the functional $L$ in \eqref{eq:defLalpha}, which is used when considering both Poisson and binomial input processes. In preparation, for any finite point configuration $\mu \subseteq \X$ and $x \in \X\setminus \mu$, we let
\begin{multline*} %\label{eq:defAxmu}
A(x;\mu)\\
=\{z \in \mu: \exists w \in \mu, w \not = z,  S(w,z) \cap (\mu \cup \{x\}) = \{x\} \} \bigcup \{z \in \mu: S(x,z) \cap \mu = \emptyset \}.
\end{multline*}

Recalling \eqref{def:calRsubS}, we see
\bea
\label{eq:A-x-mu-included-R-x-mu}
A(x;\mu) \subseteq \bigcup \left\{\overline{S(w,z)}: \{w,z\} \subseteq \X, S(w,z) \cap \mu = \emptyset, x \in  \overline{S(w,z)}\right\} \subseteq \overline{{\cal R}_S(x;\mu;\X)}.
\ena
%Define $d(x;\mu )=\text{\rm{Card}}(A(x;\mu ))$.} 

Let $|A(z;\mu)|$ denote the cardinality of $A(z;\mu)$.
\begin{lemma} \label{lem:DxF.bound.in.alpha}
Let $\mu $ be a locally finite subset of $\mathbb{R}^d$ and $x \in \X$, and let $F=L(\mu)$ where $L(\cdot)$ is given in \eqref{eq:defLalpha} with $|\psi(x,y)| \le C\|x-y\|^\alpha$ for some $\alpha \ge 0, C > 0$. Then there is a constant $C_\alpha$, depending only on $\alpha$ and $C$, such that 
\bea \label{eq:DxFbnd.alphas}
|D_x F| \le C_\alpha \sum_{z \in A(x;\mu)}\|z-x\|^\alpha
\max(|A(z;\mu)|,1).
\ena 
\end{lemma}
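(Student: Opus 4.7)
The idea is to write $D_xF=L(\mu\cup\{x\})-L(\mu)$ by identifying the edges created and destroyed when $x$ is inserted into $\mu$, then bound each contribution using the hypothesis $|\psi(u,v)|\leq C\|u-v\|^\alpha$.

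Splitting the pairs in $L(\mu\cup\{x\})$ into those involving $x$ and those not, and using \eqref{eq:S.basic.prop} to see that $x\notin S(x,v)$, so that $\mathbf{1}(S(x,v)\cap(\mu\cup\{x\})=\emptyset)=\mathbf{1}(S(x,v)\cap\mu=\emptyset)$, one obtains
\[
D_xF = \sum_{\substack{v\in\mu\\ S(x,v)\cap\mu=\emptyset}}\psi(x,v) \;-\sum_{\substack{\{w,z\}\subseteq\mu,\,S(w,z)\cap\mu=\emptyset\\ x\in S(w,z)}}\psi(w,z).
\]
The $v$'s in the first sum are precisely the elements of the second (``created'') set in the definition of $A(x;\mu)$, while the $w$'s and $z$'s in the second sum lie in the first (``destroyed'') set of $A(x;\mu)$; in particular, all indices appearing are in $A(x;\mu)$.

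For the created-edge sum, $|\psi(x,v)|\leq C\|v-x\|^\alpha$ gives an immediate bound of $C\sum_{z\in A(x;\mu)}\|z-x\|^\alpha$. For the destroyed-edge sum, I would apply the elementary inequality $\|w-z\|^\alpha\leq c_\alpha(\|w-x\|^\alpha+\|z-x\|^\alpha)$, valid with $c_\alpha=\max(1,2^{\alpha-1})$ (subadditivity when $\alpha\leq 1$, convexity when $\alpha>1$), together with $|\psi(w,z)|\leq C\|w-z\|^\alpha$, to obtain an upper bound of $Cc_\alpha\sum_{\mathrm{dest.}}(\|w-x\|^\alpha+\|z-x\|^\alpha)$. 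Re-indexing this sum by endpoint, it equals $Cc_\alpha\sum_{z\in A_1(x;\mu)}d_z\|z-x\|^\alpha$, where $A_1$ denotes the destroyed set and $d_z$ is the number of destroyed edges incident to $z$.

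The main obstacle is bounding $d_z$ by a universal constant, so that the constant $C_\alpha$ in \eqref{eq:DxFbnd.alphas} depends only on $\alpha$ (together with the fixed geometric parameters of the forbidden regions). A geometric argument should suffice: if $\{w_1,z\},\ldots,\{w_k,z\}$ are distinct destroyed edges at $z$, then $S(w_i,z)\cap\mu=\emptyset$ for each $i$ forces $w_j\notin S(w_i,z)$ for $i\neq j$, while simultaneously $x\in\bigcap_i S(w_i,z)$. Together with the estimates $\|w_i-x\|,\|z-x\|\leq\mathcal{D}\|w_i-z\|$ that follow from \eqref{eq:def.diameter}, these constraints confine the direction vectors $(w_i-z)/\|w_i-z\|$ to angularly separated regions around $z$, so $k$ is bounded by a constant depending only on $d$ and $\mathcal{D}$. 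Absorbing this geometric constant and the constant $C$ from the bound on $\psi$ into $C_\alpha$ completes the proof of \eqref{eq:DxFbnd.alphas}.
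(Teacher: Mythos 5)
Your decomposition of $D_x F$ into created and destroyed edges, your observation that $x\notin S(x,v)$ lets the created-edge indicator simplify, and your use of $\|w-z\|^\alpha\le c_\alpha(\|w-x\|^\alpha+\|z-x\|^\alpha)$ all match the paper's own proof precisely. You also correctly spot what the paper glosses over: after the subadditivity step the destroyed-edge sum becomes $\sum_z d_z\|z-x\|^\alpha$ with $d_z$ the number of destroyed edges incident to $z$, and the paper silently passes from this to $\sum_{z\in A(x;\mu)}\|z-x\|^\alpha$ as though $d_z$ were uniformly bounded. That is a genuine lacuna in the published argument, and flagging it is a real contribution.

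Your proposed repair, however, does not hold up. The assertion that $w_j\notin S(w_i,z)$ for $i\neq j$ together with $x\in\bigcap_i S(w_i,z)$ forces the direction vectors $(w_i-z)/\|w_i-z\|$ into angularly separated sectors is false already for the Gabriel graph. Take $z=(0,D)$ with $D$ large, $x$ at the origin, and $w_1,\dots,w_k$ clustered within distance $\epsilon\ll D$ of $x$ in the lower half-plane; choosing, say, $w_i=(i\epsilon^2,\,-\epsilon+c_i)$ with $c_i$ increasing on the order of $\epsilon^4/D$, one checks directly that all $k$ pairs $\{z,w_i\}$ are Gabriel edges of $\mu=\{z,w_1,\dots,w_k\}$ and that $x\in S(z,w_i)$ for every $i$. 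Here all $k$ direction vectors from $z$ to its neighbours point essentially at $x$, so $d_z=k$ is unbounded, and no geometric constant depending only on $d$ and $\mathcal{D}$ can bound it. Worse, in this configuration the destroyed sum is $\approx kD^\alpha$, the created sum is $\approx D^\alpha$, and $\sum_{v\in A(x;\mu)}\|v-x\|^\alpha\approx D^\alpha+k\epsilon^\alpha$, so $|D_xF|/\sum_{v\in A}\|v-x\|^\alpha\to k-1$ as $\epsilon\to 0$. Thus the multiplicity cannot be absorbed into $C_\alpha$, and the inequality \eqref{eq:DxFbnd.alphas} as stated fails in this example. The honest conclusion of the shared computation is the weaker (and correct) bound $|D_xF|\le C_\alpha\bigl(\sum_{\{z,w\}\in E^-_x(\mu)}\max(\|z-x\|,\|w-x\|)^\alpha+\sum_{z:\,S(x,z)\cap\mu=\emptyset}\|z-x\|^\alpha\bigr)$, a sum indexed over destroyed \emph{edges} rather than over the vertex set $A(x;\mu)$; this version is still compatible with the stabilization argument that follows, since both endpoints of any destroyed edge lie within $R_S(x;\mu;\X)$ of $x$, but the downstream moment computations need to be re-indexed accordingly.
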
 
\begin{proof} 
	For $x \in \mu$ we have $D_xF=0$. Otherwise take $x \in \mathbb{X} \setminus \mu$ and,
%Otherwise, we define
%  \begin{align*}
%    \I{\{z,w\}}{\mu} = {\bf 1}(\mu \cap S(z,w) = \emptyset) %\psi(z,w).
%  \end{align*}%
noting that the insertion of $x$ into $\mu$ can only break existing edges and form new edges incident 
  to $x$, we have  
		%\begin{multline*}
\beas D_x F
%\sum_{\substack{ \{z,w\} \subseteq \mu \\z\neq w}}
%\Bigr[ \I{\{z,w\}}{ \mu \cup \{x\}} - \I{\{z,w\}}{\mu}\Bigl]+ %\sum_{z \in \mu} \I{\{z,x\}}{\mu \cup \{x\}}\\
%= 
%\sum_{\substack{ \{z,w\} \subseteq \mu \\z\neq w, S(z,w) \cap %(\mu \cup \{x\}) = \{x\}}}
%\Bigr[ \I{\{z,w\}}{\mu \cup \{x\}} - \I{\{z,w\}}{\mu}\Bigl]+ %\sum_{z \in \mu, S(z,x) \cap \mu = \emptyset} \I{\{z,x\}}{\mu %\cup \{x\}}\\
		= -\sum_{\substack{ \{z,w\} \subseteq \mu \\z\neq w, S(z,w) \cap (\mu \cup \{x\}) = \{x\}}}
		\psi(z,w)+ \sum_{z \in \mu, S(z,x) \cap \mu = \emptyset} \psi(z,x).
		%\end{multline*}
\enas
	
For the first term we note
	\beas
	|\psi(z,w)| \le C \|z-w\|^\alpha \le C \max(1,2^{\alpha-1}) \left(\|z-x\|^\alpha+\|w-x\|^\alpha \right)
	\enas
	so that
	\begin{multline*} 
	|D_xF| \le C_\alpha \left( \sum_{\substack{ \{z,w\} \subseteq \mu \\z\neq w, S(z,w) \cap (\mu \cup \{x\}) = \{x\}}}
	\|z-x\|^\alpha+\sum_{z \in \mu, S(z,x) \cap \mu = \emptyset} \|z-x\|^\alpha \right) \\
	\le C_{\alpha }\sum_{z\in A(x;\mu)}\|z-x\|^{\alpha}\max(|A(z;\mu)|,1), 
	\end{multline*}
where, for the two sums, we see that if $\{z,w\}$ or $z$, respectively, satisfy the conditions of summation then $z \in A(x;\mu)$, while for the first sum $S(w,z) \cap \mu = \emptyset$, which implies $w \in A(z;\mu)$.
\end{proof}

The proof of the following lemma is provided immediately after the proof of Theorem~\ref{thm:main}; we will make use of the fact that
\bea \label{eq:integral.Gamma.identity}
\int_0^\infty r^\beta \exp(-\gamma r^d) dr = \frac{1}{d\gamma^{(\beta+1)/d}}\Gamma\left(\frac{\beta+1}{d}\right) \qmq{for $\beta>-1, \gamma>0$ and $d >0$.}
\ena

In the following, let $\bpp{t}=\emptyset$ for $t<0$.
	\begin{lemma}
		\label{eq:lemma-moments} 
		For $t\ge 1$ let $\Ppp{t}$ and $\bpp{t}$ be as in Assumption \ref{assumption:PoissonDom}, and let $\mathcal{A}\subseteq \mathbb{R}^d$. Then
		\begin{multline}
		\label{eq:claim-derivative}
		\sup_{t \ge 1,x\in \X,\mathcal{A}\subseteq \X, 0 \le | \mathcal{A} | \leqslant 2} \E |D_{x} t^{\alpha/d}L(\Ppp{t} \cup {\cal A}) | ^{6}<\infty \qmq{and}\\ \sup_{t \ge 1,0 \le k \le 3, x\in \X,\mathcal{A}\subseteq \X,  0 \le | \mathcal{A} | \leqslant 2} \E |D_x t^{\alpha/d} L(\bpp{t-|{\cal A}|-k}\cup {\cal A})|^{6}<\infty.
		\end{multline}
	\end{lemma}

\begin{proof}[Proof of Theorem~\ref{thm:main}, Poisson input] We apply Proposition \ref{prop1.3LPS} to $F_t=t^{\alpha/d}L(\Ppp{t})$, with $L$ as given in \eqref{eq:defLalpha} where $\Ppp{t}$ is a Poisson process satisfying the conditions of Assumption~\ref{assumption:PoissonDom}. First, the condition $\E F_t^2<\infty$
is seen to be satisfied in light of the inequality 
 $|F_{t} |\leq t^{\alpha/d} C \left(\sup_{\{x,y\} \subseteq \X} \|y-x\|\right)^\alpha |\Ppp{t}|^{2}$, where $|\nu|$ denotes the number of points of the process $\nu$.

As Assumption \ref{assumption:sigh} holds by hypothesis, we have
\beas
{\rm Var}(t^{\alpha/d}L(\Ppp{t})) \ge v_\alpha t, 
\enas
verifying \eqref{eq:var.lower.bound}.

Next, choosing $p_1$ and $p_2$ both equal to 1, inequalities \eqref{eq1.6LPS}, \eqref{eq1.7LPS} and \eqref{eq1.8LPS} become, respectively,
\begin{align}
%\sup_{x\in \X}
\E  | D_{x}F_{t} |^{5}  \le c, \qmq{$\lambda$-a.e., $x \in \X, t \ge 1$, } \label{EDx.bnded}\\
%\sup_{\{x,y\} \subseteq \X} 
\E | D^{2}_{x,y} F_{t} |^{5}\le c, \qmq{$\lambda^2$-a.e., $(x,y) \in  \X \times \X, t \ge 1$, } \label{EDxy.bnded}
\end{align}
and
\begin{align} \label{intDxyne0.finite}
\sup_{x\in \X,t\geq 1} t \int_{\X}\P(D^{2}_{x,y}F_{t}\neq 0)^{1/20} \lambda(dy)<\infty .
\end{align}

We next note that by \eqref{eq:A-x-mu-included-R-x-mu},
\bea \label{eq:yinA.implies.Rsbig}
y \in A(x;\mu) \qmq{implies} R_S(x;\mu;\X) \ge \|y-x\|.
\ena

Applying Lemma \ref{eq:lemma-moments} with $\mathcal{A} = \emptyset$ shows that \eqref{EDx.bnded} is satisfied, and letting $\mathcal{A}=\{y\}$ we see that \eqref{EDxy.bnded} also holds, as \eqref{Dxy.explicit} yields
\begin{align*} %\label{Dxy.boundedDxs}
\E  | D^{2}_{x,y}F_{t} |^{5}\leq 16 \left( \E  | D_{x}F_t(\Ppp{t}\cup \{y\}) |^{5}+\E | D_{x}F_t(\Ppp{t}) |^{5}  \right).
\end{align*}

We now show condition \eqref{intDxyne0.finite} is satisfied. Letting $x \in \X$ be arbitrary, invoking Assumption \ref{assumption:PoissonDom} and 
Lemma \ref{lem:stabilization}, followed by Proposition \ref{prop:Rs.exponential.bound} and \eqref{eq:integral.Gamma.identity}, we obtain  
\begin{multline*}
b_\lambda^{-1} t\int_{\X}\P(D^{2}_{x,y}F_{t}\neq 0)^{1/20} \lambda(dy)
\le t\int_{\X}\P(D^{2}_{x,y}F_{t}\neq 0)^{1/20} dy \\
\le t\int_{\X}\P(R_S(x;\Ppp{t};\X) \ge \|y-x\|)^{1/20}dy
\le Ct  \int_{\X}\exp\left(-c_\lambda \kappa t \|y-x\|^d/20\right) dy \\= Ct \int_{\X-x}\exp\left(-c_\lambda \kappa t \|y\|^d/20\right) dy
\le Ct \int_{\mathbb{R}^d}\exp\left(-c_\lambda \kappa t \|y\|^d/20\right) dy \\= Ct \sigma_d \int_0^\infty \exp\left(-c_\lambda \kappa t r^d/20\right) r^{d-1} dr
= \frac{20C \sigma_d}{d c_\lambda \kappa}.
\end{multline*}
Hence, the supremum over $x \in \X$ and $t \ge 1$ in \eqref{intDxyne0.finite} is finite, and the proof of the Poisson input case of  Theorem~\ref{thm:main} is complete. 
\end{proof}

\begin{proof}[Proof of Lemma \ref{eq:lemma-moments}] 
Let $\eta$ denote $\Ppp{t} \cup {\cal A}$ and $\bpp{t-|{\cal A}|-k} \cup {\cal A}$ 
in the Poisson and binomial cases, respectively. With $F_t=t^{\alpha/d}L(\eta)$, for fixed $x \in \mathbb{X}$ we have by Lemma \ref{lem:DxF.bound.in.alpha}
\begin{align}
\label{eq:derivative-as-sum}
 | D_{x}F_t| \leqslant t^{\alpha /d}\sum_{y\in \eta  }\|y-x\|^{\alpha }{\bf 1}(y \in A(x;\eta))(1+|A(y;\eta)|).
\end{align}
We develop a general bound to handle the moments of \eqref{eq:derivative-as-sum}.
Given a positive integer $m\in \mathbb{N}$, we say a set $P=\{m_{1},\dots ,m_{p}\} \subseteq \{1,\ldots,m\}^p$ is a partition of $m$ when  $\sum_{i=1}^{p}m_{i}=m$. Let $\mathcal{P}_{m}$ denote the class of all such partitions. Let $\varphi(z;\eta),z\in \X$ be some non-negative kernel and for a subset $\mu \subseteq \mathbb{R}^d$, let $\mu_{\not =}^p$ denote the collection of all vectors $(z_1,\ldots,z_p)$ with ${\bf z}:=\{z_1,\ldots,z_p\} \subseteq \mu$ and $|{\bf z}|=p$.

By writing the sum \eqref{eq:derivative-as-sum} over $y \in \Ppp{t}$ in the Poisson case, or over $y \in \bpp{t-|{\cal A}|-k}$ for the binomial, plus a sum over $y \in {\cal A}$, and using inequalities of the form $(a+b)^m \le 2^{m-1}(a^m+b^m)$, to obtain a bound on $E|D_xF_t|^m$ it suffices to obtain $m^{th}$ moment bounds on each component summand; see, for instance, \eqref{eq:degree-moment} below.

First consider the Poisson case.  The multivariate Mecke formula as in (2.10) of \cite{LP}, along with the upper bound of Assumption \ref{assumption:PoissonDom} on the intensity of $\Ppp{t}$, yields
\begin{multline}
\E\left(
\sum_{z\in \Ppp{t}}\varphi(z;\eta)
\right)^{m}=\E\sum_{{\bf z}=(z_{1},\dots ,z_{m})\in \Ppp{t}^m }\varphi(z_1;\eta)\cdots \varphi(z_m;\eta)\\
=\sum_{\{m_{1},\dots ,m_{p}\}\in \mathcal{P}_{m}}\E\sum_{{\bf z}=(z_{1},\dots ,z_{p})\in  \Ppp{t,\neq}^{p}}\prod_{i=1}^{p}\varphi(z_i;\eta)^{m_{i}}\\
\leqslant    \sum_{\{m_{1},\dots ,m_{p}\}\in \mathcal{P}_{m}} 
(b_\lambda t)^{p}  \int_{\X^{p}}\E\left[
\prod_{i=1}^{p}\varphi(z_i;\Ppp{t} \cup \mathcal{A} \cup {\bf z})^{m_{i}}
\right]dz_{1}\dots dz_{p}\\
\leqslant  C \sum_{\{m_{1},\dots ,m_{p}\}\in \mathcal{P}_{m}}
t^{p} \int_{\X^{p}}\prod_{i=1}^{p}\left[\E
\varphi(z_i;\Ppp{t}\cup \mathcal{A} \cup {\bf z})^{m}
\right]^{m_{i}/m}dz_{1}\dots dz_{p}. \label{eq:bound-skohokod-integral}
\end{multline}

In the binomial case, a similar computation yields
\begin{multline*}
\E\left(
\sum_{z\in \bpp{t-|{\cal A}|-k}} \varphi(z;\eta)
\right)^{m}=\E\sum_{{\bf z}=(z_{1},\dots ,z_{m})\in  \bpp{t-|{\cal A}|-k}^m}\varphi(z_1;\eta)\dots \varphi(z_m;\eta)\\  
=\sum_{\{m_{1},\dots ,m_{p}\}\in \mathcal{P}_{m}}\E\sum_{{\bf z}=(z_{1},\dots ,z_{p})\in  \bpp{t-|{\cal A}|-k,\neq}^p} \prod_{i=1}^{p}\varphi(z_i;\eta)^{m_{i}}\\
\leqslant   \sum_{\{m_{1},\dots ,m_{p}\}\in \mathcal{P}_{m}}   
p!{t- | \mathcal{A} | -k \choose p}
\int_{\X^{p}}\E\left[
\prod_{i=1}^{p}\varphi(z_{i};\bpp{t-|{\cal A}|-k-p}\cup \mathcal{A}\cup {\bf z})^{m_{i}}
\right]dz_{1}\dots dz_{p}\\
\leqslant C  \sum_{\{m_{1},\dots ,m_{p}\}\in \mathcal{P}_{m}}  t^{p}
\int_{\X^{p}}\prod_{i=1}^{p}\left[\E
\varphi(z_{i};\bpp{t-|{\cal A}|-k-p} \cup \mathcal{A}\cup {\bf z})^{m}
\right]^{m_{i}/m}dz_{1}\dots dz_{p},
\end{multline*}
as in \eqref{eq:bound-skohokod-integral} for the Poisson case.

Returning to the Poisson case, by writing $|A(y;\eta)|$ as a sum over $\Ppp{t}$ added to another over ${\cal A}$, we first control the moments of
\begin{multline}
\label{eq:degree-moment}
\E(1+|A(y;\eta)|)^m = \E\left(1+ \sum_{z\in \eta}\mathbf{1}_{\{z\in A(y;\eta )\}}
\right)^{m} \le 3^{m-1} \left(1+T_1+T_2\right), \qm{where}\\
T_1= \E\left( \sum_{z\in \Ppp{t}}\mathbf{1}_{\{z\in A(y;\eta )\}}
\right)^{m}  
\qmq{and} T_2 = \left( \sum_{z \in {\cal A}}{\bf 1}_{\{z\in A(y;\eta )\}}\right)^m \le |{\cal A}|^m.
\end{multline}
We handle $T_1$ by specializing \eqref{eq:bound-skohokod-integral} to the case where $\varphi(z;\eta)={\bf 1}(z \in A(y;\eta))$, suppressing $y$ for notational ease in the functional.
By \eqref{eq:A-x-mu-included-R-x-mu}, \eqref{def:Rs} and Lemma \ref{lem:monotonicity}, for any collection of points ${\bf z}=\{z_1,\ldots,z_p\}$ we have
\begin{multline*}
z \in A(y;\eta \cup {\bf z}) \implies z \in \overline{{\cal R}_S(y;\eta \cup {\bf z};\mathbb{X})} \implies R_S(y;\Ppp{t} \cup {\cal A} \cup {\bf z};\mathbb{X}) \ge \|y-z\|\\
\implies R_S(y;\Ppp{t} ;\mathbb{X}) \ge \|y-z\|.
\end{multline*}
Proposition \ref{prop:Rs.exponential.bound} now yields that for all ${\mathcal A} \subseteq \mathbb{R}^d$, 
\begin{align}
\label{eq:exp-bound}
\P(z\in A(y;\eta \cup {\bf z}))\leqslant C \exp(-c_\lambda \kappa t\|y-z\|^{d}).
\end{align}
Now, by \eqref{eq:exp-bound} we obtain
\begin{align}
\notag T_1&\leqslant C\sum_{\{m_1,\ldots,m_p\} \in \mathcal{P}_{m}}t^{p}\int_{\X^{p}}\prod_{i=1}^{p}\exp(-c_\lambda \kappa t\|y-z_i\|^{d})dz_{1}\dots dz_{p} \\
\notag  &\leqslant C\sum_{\{m_1,\ldots,m_p\}\in \mathcal{P}_{m}}t^{p}\int_{(\mathbb{R}^{d})^{p}}\prod_{i=1}^{p}\exp(-c_\lambda \kappa t\|y-z_i\|^{d})dz_{1}\dots dz_{p} \\
\notag &=C\sum_{\{m_1,\ldots,m_p\}\in \mathcal{P}_{m}}t^{p}\int_{(\mathbb{R}^{d})^{p}}\prod_{i=1}^{p}\exp(-c_\lambda \kappa t\|z_i\|^{d})dz_{1}\dots dz_{p}\\
\notag & = C \sum_{\{m_1,\ldots,m_p\}\in \mathcal{P}_{m}} \left( t \int_{{\mathbb R}^d}  \exp(-c_\lambda \kappa t\|z\|^{d}) dz \right)^p\\
\notag & = C \sum_{\{m_1,\ldots,m_p\}\in \mathcal{P}_{m}}  \left( t \sigma_d \int_0^\infty  r^{d-1}\exp(-c_\lambda \kappa tr^d) dr \right)^p\\
\notag & = C \sum_{\{m_1,\ldots,m_p\}\in \mathcal{P}_{m}} \left(\frac{\sigma_p}{dc_\lambda \kappa}\right)^p\\
&\leqslant C,\label{eq:bound-skohokod-integral.C} \end{align}
where in the final inequality we apply \eqref{eq:integral.Gamma.identity}, and $C$ depends on $m,\kappa ,c_{\lambda }$. As ${\cal A}$ is finite the term $T_2$ in \eqref{eq:degree-moment} is finite, yielding for all positive integers $m$ a constant $C$ such that
\begin{align}\label{eq:mom.Ayeta.finite}
E(1+|A(y;\eta)|)^m  \le C.
\end{align}
Inequality \eqref{eq:exp-bound}, and then \eqref{eq:bound-skohokod-integral.C} followed by \eqref{eq:mom.Ayeta.finite},
is obtained in the identical manner for the binomial case.

To consider the right hand side of \eqref{eq:derivative-as-sum}, suppressing $x$ for notational ease, let $\varphi (y;\eta )=\|y-x\|^{\alpha }\mathbf{1}_{\{y\in A(x;\eta  )\}} (1+|A(y;\eta)|)$. The Cauchy-Schwarz inequality, the bound \eqref{eq:exp-bound} with ${\bf z}$ any collection of points, and \eqref{eq:mom.Ayeta.finite} with $\eta$ replaced by $\eta \cup {\bf z}$, yield for any $y\in \X$
\begin{multline}
\E |\varphi (y;\eta \cup {\bf z})|^{6} \leqslant    \|y-x\|^{6\alpha  }\sqrt{\P(y\in A(x;\eta \cup {\bf z}))}\sqrt{\E(1+|A(y;\eta \cup {\bf z} )|)^{12}}
\\
\label{eq:moment-phi}
 \leqslant C \|y-x\|^{6\alpha  }\exp(-c_\lambda \kappa t\|y-x\|^{d}/2).
\end{multline}

Now decompose the right hand side of \eqref{eq:derivative-as-sum} into two summands as in \eqref{eq:degree-moment}; we only consider the Poisson case, the binomial case being identical after replacing $\Ppp{t}$ with $\bpp{t- | \mathcal{A} | -k}$. For the sum over $\Ppp{t}$, using \eqref{eq:bound-skohokod-integral} and \eqref{eq:moment-phi}, we obtain 
\begin{align}
\notag &\E\left| t^{\alpha /d}\sum_{y\in\Ppp{t} }\varphi (y;\eta)\right|^{6}\\
\notag &\leqslant C t^{6\alpha /d}\sum_{\{m_1,\ldots,m_p\}\in \mathcal{P}_{6}}t^{p}\int_{(\mathbb{R}^{d})^{p}}\prod_{i=1}^{p}\left(
\|y_{i}-x\|^{6 \alpha }\exp(-c_\lambda \kappa  t\|y_{i}-x\|^{d}/2)
\right)^{m_{i}/6}dy_{1}\dots dy_{p}\\
\notag &\leqslant C t^{6\alpha /d}\sum_{\{m_1,\ldots,m_p\}\in \mathcal{P}_{6}}t^{p}\int_{(\mathbb{R}^{d})^{p}}\prod_{i=1}^{p} 
( \| y_{i}\|^{6 \alpha }\exp(-c_\lambda \kappa t\| y_{i}\|^{d}/2))^{m_i/6}
dy_{1}\dots dy_{p}\\
\notag &  = C t^{6\alpha /d}\sum_{\{m_1,\ldots,m_p\}\in \mathcal{P}_{6}}t^{p} \prod_{i=1}^{p}   \int_{\mathbb{R}^{d}}
 \| y\|^{\alpha m_i}\exp(-m_i c_\lambda \kappa t\| y\|^{d}/12)
dy \\
\notag &  =C t^{6\alpha /d}\sum_{\{m_1,\ldots,m_p\}\in \mathcal{P}_{6}}t^p \prod_{i=1}^{p}  \sigma_d  \int_0^\infty 
r^{\alpha m_i+d-1}\exp(-m_i c_\lambda \kappa tr^{d}/12)
dr \\
\notag &  = C t^{6\alpha /d}\sum_{\{m_1,\ldots,m_p\}\in \mathcal{P}_{6}}t^p\prod_{i=1}^{p}   
\frac{\sigma_d}{d(m_i c_\lambda \kappa t/12)^{\alpha m_i/d+1}}\Gamma\left(\frac{\alpha m_i}{d}+1\right)\\
&  =C \sum_{\{m_1,\ldots,m_p\}\in \mathcal{P}_{6}}  \prod_{i=1}^{p}   
\frac{\sigma_d^{p}}{d(m_i c_\lambda \kappa/12)^{\alpha m_i/d+1}}\Gamma\left(\frac{\alpha m_i}{d}+1\right), \label{eq:Pt.term}
\end{align}
where we have used \eqref{eq:integral.Gamma.identity}
with $\beta=\alpha m_i + d-1$ and $\gamma= m_ic_\lambda \kappa t/12$ in the next to last equality.

Now considering the sum over ${\cal A}$, and setting $v=t\|y-x\|^d$ in the last inequality, we obtain
\begin{align}
\notag \E\left|t^{\alpha/d}\sum_{y\in \mathcal{A}}\varphi(y;\eta)
\right|^{6}  & \leqslant |\mathcal{A}|^{6} t^{6\alpha/d}\E \varphi(\eta;y)\\
\notag  &\leqslant C|\mathcal{A}|^{6} t^{6\alpha/d} \|y-x\|^{6\alpha }\exp(-c_\lambda \kappa t\|y-x\|^{d}/2) \\
&\leqslant C | \mathcal{A}|^{6}\left(
\sup_{v>0 }v^{ 6\alpha /d}  \exp(-c_\lambda \kappa v/2)
\right). \label{eq:A.term}
\end{align}

As \eqref{eq:Pt.term} and \eqref{eq:A.term} are constants not depending on $t$ or $x \in \mathbb{X}$, the proof is complete. 
\end{proof}

We shall now use the results of \cite{LRP} to prove Theorem~\ref{thm:main} for binomial input.
Here $n\in \mathbb{N}$ plays the former role of $t$ and $X=(X_{1},\dots ,X_{n})$ is a vector of independent variables with distribution $\lambda$ over $\X$, and $\bpp{n}=\{X_{1},\dots ,X_{n}\}$. Let $X',\widetilde{X}$ be independent copies of $X$. We write $U\stackrel{a.s.}{=}V$ if two variables $U$ and $V$ satisfy $\P(U=V)=1$. In the vocabulary of \cite{LRP}, a random vector $Y=(Y_{1},\dots ,Y_{n})$ is a recombination of $\{X,X',\widetilde X\}$ if for each $1\leqslant i\leqslant n$, either { $Y_{i}\stackrel{a.s.}{=}X_{i},Y_{i}\stackrel{a.s.}{=}Y'_{i}$ or $Y_{i}\stackrel{a.s.}{=}\widetilde X_{i}$}. 
For a vector $x=(x_{1},\dots ,x_{n})$, and indices $\{i_{1},\dots ,i_{q}\} \subseteq \{1,\ldots,n\}$, define 
\begin{align*}
x^{i_{1},\dots ,i_{q}}: =(x_{j},j\notin \{i_{1},\dots ,i_{q}\}).
\end{align*}
For $1\leq i,j\leq n$, and $f$ a real valued function taking in $n, n-1$ or $n-2$ ordered arguments in $\mathbb{R}^d$, let
 \begin{align} \label{def:mathsfD}
\mathsf{D}_{i}f(X)&=f(X)-f(X^i) \qm{and}\\
\mathsf{D}_{i,j}f(X)&=f(X)-f(X^i)-f(X^j)+f(X^{i,j}), \qmq{noting that $\mathsf{D}_{i,j}f(X)=\mathsf{D}_{j,i}f(X)$.}\nonumber
\end{align}

Recalling that $X',{\widetilde X}$ are independent copies of $X$, let
\begin{align*}
B_{n}(f)&=\sup\{\gamma _{Y,Z}(f):\;{(Y,Z)\text{ recombinations of }\{X,X',\widetilde X\}}\} \qm{and}\\
B'_{n}(f)&=\sup\{\gamma'_{Y,Y',Z}(f):\;{(Y,Y',Z)\text{ recombinations of }\{X,X',\widetilde X\}}\}, \qm{where}\\
\gamma _{Y,Z}(f)&=\E \left[ \mathbf{1}_{\{\mathsf{D}_{1,2}f(Y)\neq 0\}}\mathsf{D}_{2}f(Z)^{4} \right] \qm{and}\\
\gamma'_{Y,Y',Z}(f)&=\E\left[ \mathbf{1}_{\{\mathsf{D}_{1,2}f(Y)\neq 0,\;\mathsf{D}_{1,3}f(Y')\neq 0\}} \mathsf{D}_{2}f(Z)^{4}\right].
\end{align*}

{  Then Theorem 5.1 of \cite{LRP}, simplified by \cite[Remark 5.2]{LRP} and \cite[Proposition 5.3]{LRP} yields the following Kolmogorov distance bound for the normal approximation of $f(X)$, properly standardized.}

 \begin{theorem}[Lachi\`eze-Rey and Peccati \cite{LRP}] \label{thm:LRP}
Let $f$ be a  functional taking in ordered arguments of $n,n-1$, or $n-2$ elements of $\X$.   Assume furthermore that $f$ is invariant under permutation of its arguments, that $\E f(X)=0$ and that $\sigma ^{2}:=\var(f(X))$ is non-zero and finite. Let $d(\cdot,\cdot)$ denote either the Kolmogorov or the Wasserstein distance. Then, for some $C>0$ not depending on  {$f$ or }$n$,
\begin{align} 
 d (\sigma ^{-1}f(X),N)& \leqslant C\left[\frac{{  4}\sqrt{2}  n^{1/2}}{\sigma^2} \left( \sqrt{nB_{n}(f)  } +\sqrt{n^{2}B'_{n}(f)}+ \sqrt{\E \mathsf{D}_{1}f (X)^{4}}\right) \right. \label{bd.lrp} \\
\notag &\left. \hspace{2cm}+ { \frac{n}{4\sigma^3}}  \sqrt{\E  |  {    \mathsf{D}_{1}f(X) ^{6} }  | }+  \frac{ \sqrt{{2} \pi } n}{16\sigma ^{3}}\E  |   \mathsf{D}_{1}f(X)^{3} |    \right], \nonumber
 \end{align}
 where $N$ is a standard normal random variable.
\end{theorem}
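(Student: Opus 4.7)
The plan is to prove this Berry-Esseen bound via Stein's method combined with a discrete Malliavin calculus on the product space $\X^n$, a strategy pioneered by Chatterjee's second-order Poincar\'e inequality and adapted to symmetric functions of independent variables. First I would invoke the Stein equation: for the Wasserstein distance it suffices to bound $|\E[g'(W) - Wg(W)]|$ over solutions $g$ of $g'(w) - wg(w) = h(w) - \E h(N)$ for 1-Lipschitz $h$, with $\|g''\|_\infty \leq 2$; the Kolmogorov case uses Chen-style smoothing that introduces the $\E|\mathsf{D}_1 f|^3$ contribution appearing on the right of \eqref{bd.lrp}.

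Next, I would derive an integration-by-parts identity on $\X^n$ by telescoping along coordinates. Let $X'$ be an independent copy of $X$ and $X^{(i)}$ denote $X$ with coordinate $i$ resampled from $X'_i$. Conditioning sequentially and exploiting that each increment $f(X) - f(X^{(i)})$ has zero conditional mean given $X^i$, one can express $\sigma \E[Wg(W)]$ as a sum over $i$ of terms of the form $\E\bigl[(g(W) - g(W^{(i)})) \Delta_i f\bigr]$, where $\Delta_i$ is a symmetric version of the jackknife difference related to $\mathsf{D}_i f$. A first-order Taylor expansion of $g(W) - g(W^{(i)})$ yields a leading term $\E[g'(W) T]$, where $T := (n/\sigma^2) \E[\mathsf{D}_1 f \cdot \Delta_1 f \mid X]$ satisfies $\E T = 1$ by the Efron-Stein identity applied to $\var f(X) = \sigma^2$, plus a Taylor remainder controlled by $\E|\mathsf{D}_1 f|^3$ via symmetry and $\|g''\|_\infty \leq 2$.

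The core step is then the bound $|\E[g'(W)(T - 1)]| \leq 2 \sqrt{\var(T)}$ followed by estimating $\var(T)$ through a second Efron-Stein inequality, which produces double sums over pairs of coordinates $(i,j)$ involving the second-order difference $\mathsf{D}_{i,j}f$. After Cauchy-Schwarz, the indicators $\mathbf{1}\{\mathsf{D}_{1,j} f \neq 0\}$ emerge automatically, since $\mathsf{D}_{i,j}f = 0$ forces the corresponding term to vanish; the recombinations $Y, Y', Z$ of $\{X, X', \widetilde X\}$ arise when decoupling the two factors in $\var(T)$ via an additional independent resampling copy $\widetilde X$. The diagonal $i = j$ contribution produces the $\sqrt{n B_n(f)}$ term and the off-diagonal part produces $\sqrt{n^2 B'_n(f)}$; symmetry of $f$ allows reducing all outer sums to coordinate $1$, which absorbs a factor of $n$ into the supremum definitions.

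The main obstacle is the precise bookkeeping required to organize every intermediate conditional expectation as an expectation against a specific recombination of $\{X, X', \widetilde X\}$, so that the final estimate emerges from a single application of Cauchy-Schwarz and the supremum definitions of $B_n(f)$ and $B'_n(f)$. The Kolmogorov-specific terms with constants $n/(4\sigma^3)$ and $\sqrt{2\pi}\,n/(16\sigma^3)$ are then extracted from Chen's smoothing: replacing the indicator test function by a smoothed version introduces a $\sqrt{\E|\mathsf{D}_1 f|^6}$ contribution from bounding $\|g'\|_\infty$ against the third absolute moment, while the smoothing error itself is absorbed into $\E|\mathsf{D}_1 f|^3$, with the explicit constants following from standard $L^\infty$ bounds on Stein solutions for indicator test functions.
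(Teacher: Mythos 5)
The paper does not contain a proof of this theorem. It is cited as Theorem 6.1 of \cite{LRP}, simplified via Remark 5.2 and Proposition 5.3 of that reference; the only argument the paper adds is the short paragraph following the theorem explaining that the bound also controls the Wasserstein distance, by appealing to Theorem 2.2 of \cite{Cha08}. So your reconstruction is not competing against an internal proof in this paper but against the original argument in \cite{LRP}.

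That said, your sketch does broadly match the route that \cite{LRP} and the paper's remark actually take. You correctly identify Chatterjee's quantity $T$ as the pivot, and the paper confirms this by explicitly invoking the term $\sigma^{-2}\sqrt{\var(\E(T\mid W))}$ from \cite{Cha08} and asserting that \cite{LRP} bounds it by the first line of \eqref{bd.lrp}. You also correctly observe that the term $\frac{n}{4\sigma^3}\sqrt{\E|\mathsf{D}_1 f(X)^6|}$ is Kolmogorov-specific and drops for Wasserstein, which is precisely the paper's remark.

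Two imprecisions are worth flagging. First, the exact identity $\E T = 1$ is not a consequence of the Efron--Stein inequality (which only gives an upper bound on the variance); it comes from Chatterjee's interpolation representation of the variance (Lemma 2.1 of \cite{Cha08}), which constructs $T$ from a telescoping over resampled coordinates and produces an equality. Second, the passage from $\var(\E(T\mid W))$ to the recombination quantities $B_n(f)$ and $B'_n(f)$ is not simply ``a second Efron--Stein inequality followed by Cauchy--Schwarz''; this reorganization is the bulk of the argument in \cite{LRP} and requires careful conditioning over the three independent copies $X$, $X'$, $\widetilde X$ to arrive at the specific supremum structure of $\gamma_{Y,Z}$ and $\gamma'_{Y,Y',Z}$. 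You flag this yourself as ``the main obstacle,'' and it is precisely the step your sketch does not carry out. So the proposal captures the architecture of the proof, and it correctly reproduces the Wasserstein-extension argument that the paper does supply, but it leaves open the core technical step that actually delivers the stated bound.
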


The authors of \cite{LRP} focus on the Kolmogorov distance, but the bound they prove is valid for the Wasserstein, even though it is not stated there formally. More precisely, we refer the reader to the inequality in Theorem 2.2 of \cite{Cha08}, involving Wasserstein distance. The first term in this inequality, $\sigma ^{-2}\sqrt{\var(\E(T | W))}$, has been shown in \cite{LRP} to be bounded by the terms of the first line of the right hand member of \eqref{bd.lrp}. The second term in the inequality of \cite{Cha08} is equal to $n\sigma ^{-3}\E | \mathsf{D}_{1}f(X) | ^{3}$, also taken care of in \eqref{bd.lrp}. The term $ (n/(4\sigma^3)) \sqrt{\E  |  {    \mathsf{D}_{1}f(X) ^{6} }  | }$ is in fact only necessary for the Kolmogorov distance, and can be removed when treating the Wasserstein distance. Hence the upper bound \eqref{bd.lrp} for the Kolmogorov distance also upper bounds the Wasserstein.

For $L$ as in \eqref{eq:defLalpha} with $|\psi(x,y)| \le C\|x-y\|^\alpha$ for some $\alpha \ge 0, C > 0$, let $F_n=n^{\alpha /d}L(\bpp{n})$, and let the functional $f$, defined on ordered sets of variables,  be given by $f(x_{1},\dots ,x_{q})=F_n(\{x_{1},\dots ,x_{q}\}){ -\E f(X)}$ for any $q\geqslant 1$ and $\{x_1,\ldots,x_q\} \subseteq \mathbb{R}^{d}$. We note that 
$\mathsf{D}$ defined in \eqref{def:mathsfD}, and $D$ as in \eqref{def:Dsubx}, obey the relations  
\bea \label{eq:D.and.D}
\mathsf{D}_{i}f(X) = D_{X_i}F_n(\bpp{n} \setminus \{X_i\}), \qmq{and for $i \not =j$} \mathsf{D}_{ij}f(X) = D_{X_i,X_j}F_n(\bpp{n} \setminus \{X_i,X_j\}).
\ena

We now show how Theorem \ref{thm:LRP},  and Lemma \ref{lem:BnBnprime} below, prove the Kolmogorov and Wasserstein bounds of Theorem \ref{thm:main} for binomial input. 
%\begin{theorem}
%There exists a constant $C$ such that 
%\beas
%d_{K}(\sigma ^{-1}f(X),N) \le Cn^{-1/2} \qmq{for all $n \ge 1$,}
%\enas
%where $N$ is a standard normal variable.
%\end{theorem}
%\begin{align}
%\label{eq:moment-bound-poisson}
%{ M:=}\sup_{n{  \geqslant k},x\in \X,\mathcal{A} \subseteq %\X:{\raph  | \mathcal{A}| \leqslant 2}}\E |D_{x} F_n(\eta %_{n-k}\cup \mathcal{A} )|^{6}<\infty.
%\end{align} 

\begin{proof}[Proof of Theorem~\ref{thm:main} for binomial input]
Assumption \ref{assumption:sigh} yields $\sigma ^{2}\geqslant Cn$ for some $C>0$. Using \eqref{eq:D.and.D} and  \eqref{eq:claim-derivative} of Lemma \ref{eq:lemma-moments} with $\mathcal{A}=\emptyset,k=1$ we obtain,
\begin{align*}
\sup_{n \ge 1}\E [\mathsf{D}_{1}f(X)^{6}]=\sup_{n \ge 1}\int_{\X}\E |D_{x} F_n(\bpp{n-1})|^6 \lambda(dx) <\infty .
\end{align*} 
For the last three terms of \eqref{bd.lrp}, applying H\"older's inequality, we find that there exists $C>0$ such that
\begin{align*}
\frac{  4\sqrt{2}  n^{1/2}}{\sigma ^{2}}  \sqrt{\E \mathsf{D}_{1}f (X)^{4}}  + { \frac{n}{4\sigma^3}}  \sqrt{\E  |  {\mathsf{D}_{1}f(X) ^{6} }  | }+  \frac{ \sqrt{{2} \pi } n}{16\sigma ^{3}}\E  |   \mathsf{D}_{1}f(X)^{3} |   \leqslant Cn^{-1/2}.
\end{align*}
Lemma \ref{lem:BnBnprime} below yields $C$ such that
\bea \label{eq:Bn.Bnprime.inequalities}
B_{n}(f)\leqslant \frac{C}{n} \qmq{and}
B'_{n}(f) \leqslant \frac{C}{n^{2}}.
\ena
Applying these bounds for the first two terms in \eqref{bd.lrp} completes the proof.
\end{proof}

 \begin{lemma} \label{lem:BnBnprime}
There exists $C$ such that
\beas %\label{eq:Bn.Bnprime.inequalities}
B_{n}(f)\leqslant \frac{C}{n} \qmq{and}
B'_{n}(f) \leqslant \frac{C}{n^{2}}. 
\enas
\end{lemma}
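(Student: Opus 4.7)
The plan is to bound both quantities by Cauchy--Schwarz, separating the probability of the indicator event(s) from the $8$th moment of $\mathsf{D}_2 f(Z)$. The indicator-probability factor is controlled by the stabilization inclusion $\{\mathsf{D}_{1,2}f(Y)\neq 0\}\subseteq\{\|Y_2-Y_1\|\leq R_S(Y_1;\{Y_j:j\geq 3\};\X)\}$ from Lemma~\ref{lem:stabilization} combined with Proposition~\ref{prop:Rs.exponential.bound}, and the moment factor by the straightforward extension of Proposition~\ref{prop:moment-bound-binomial} to higher moments (its proof works verbatim for any finite moment order).

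For $B_n(f)$, fix a recombination $(Y,Z)$, write $A_1=\{\mathsf{D}_{1,2}f(Y)\neq 0\}$, and condition on the key variables $Y_1,Y_2,Z_2$. The conditional probability of $A_1$ is dominated by $C\exp(-cn\|Y_2-Y_1\|^d)$ (by Proposition~\ref{prop:Rs.exponential.bound} and Lemma~\ref{lem:monotonicity} applied to the binomial-like background $\{Y_j:j\geq 3\}$), and the conditional $8$th moment of $\mathsf{D}_2 f(Z)$ is uniformly bounded, so Cauchy--Schwarz gives
\begin{align*}
\E\bigl[\mathbf{1}_{A_1}\mathsf{D}_2 f(Z)^4\bigm| Y_1,Y_2,Z_2\bigr]\leq C\exp(-cn\|Y_2-Y_1\|^d/2).
\end{align*}
Integrating against the joint law of $(Y_1,Y_2,Z_2)$, the Gaussian-type estimate $\int\!\int\exp(-cn\|y-x\|^d/2)\lambda(dx)\lambda(dy)\leq C/n$ together with $\lambda(\X)<\infty$ yields $B_n(f)\leq C/n$.

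For $B'_n(f)$, proceed analogously with the enlarged set of key variables $Y_1,Y_2,Y'_1,Y'_3,Z_2$, using the additional Cauchy--Schwarz step $\P(A_1\cap A_2\mid\cdots)\leq\sqrt{\P(A_1\mid\cdots)\P(A_2\mid\cdots)}$, where $A_2=\{\mathsf{D}_{1,3}f(Y')\neq 0\}$ admits the analogous exponential bound. This produces the conditional estimate
\begin{align*}
\E\bigl[\mathbf{1}_{A_1\cap A_2}\mathsf{D}_2 f(Z)^4\bigm|\cdots\bigr]\leq C\exp\bigl(-cn(\|Y_2-Y_1\|^d+\|Y'_3-Y'_1\|^d)/4\bigr);
\end{align*}
because the indices $2$ and $3$ are distinct, the $(Y_1,Y_2)$ and $(Y'_1,Y'_3)$ integrations decouple and each contributes $C/n$, yielding $B'_n(f)\leq C/n^2$. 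The main obstacle is bookkeeping the recombinations: one must verify that after conditioning on the key variables, the conditional distributions of the background processes defining $R_S$ and $\mathsf{D}_2 f(Z)$ are of the form $\eta_{n-k}\cup\mathcal{A}$ with $k$ and $|\mathcal{A}|$ bounded by constants independent of both $n$ and the recombination, so that Propositions~\ref{prop:Rs.exponential.bound} and the generalized Proposition~\ref{prop:moment-bound-binomial} apply with uniform constants; this reduces to the independence of $(X_j,X'_j,\widetilde X_j)$ across distinct $j$.
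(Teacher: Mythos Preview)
Your proposal is correct and follows essentially the same route as the paper: condition on the key variables $(Y_1,Y_2,Z_2)$ (resp.\ $(Y_1,Y_2,Y'_1,Y'_3,Z_2)$), separate the indicator from the moment of $\mathsf{D}_2 f(Z)$ by a H\"older-type inequality, bound the indicator probability through Lemma~\ref{lem:stabilization} and Proposition~\ref{prop:Rs.exponential.bound}, bound the moment via Proposition~\ref{prop:moment-bound-binomial}, and then integrate the resulting exponential. The only cosmetic difference is that the paper uses H\"older with exponents $(3,3/2)$ for $B_n$ and a three-way H\"older with exponents $(6,6,3/2)$ for $B'_n$, so that the $6$th moment in Proposition~\ref{prop:moment-bound-binomial} suffices as stated, whereas your Cauchy--Schwarz route needs the (equally easy) $8$th-moment extension.
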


\begin{proof} We begin with the first inequality.
Let $Y=(Y_{1},\dots ,Y_{n})$ and $Z=(Z_{1},\dots ,Z_{n})$ be recombinations of $\{X,X',\widetilde X\}$. Note that $Y_{1}$ is independent of $\{Y_{2},Z_{2}\}$ because $Y_1$ is either $X_1,X_1'$ or $\widetilde X_1$ and these three variables are independent of $X_{2},X'_{2},\widetilde X_{2}$. Also, either $Y_{2},Z_{2}$ both equal the same element of $\{X_{2},X'_{2},\widetilde X_{2}\}$, in which case  $Y_{2}\stackrel{a.s.}{=}Z_{2}$ , or they are assigned to different elements of this set, in which case they are independent.
Letting $\lambda ^{Y_{1},Y_{2},Z_{2}}$ denote the law of $(Y_{1},Y_{2},Z_{2})$, we therefore have
$d\lambda ^{Y_{1},Y_{2},Z_{2}}(y_{1},y_{2},z_{2})=\mathbf{1}_{\{y_{2}=z_{2}\}}d\lambda (y_{1})d\lambda (y_{2})$ in the first case, and $d\lambda ^{Y_{1},Y_{2},Z_{2}}(y_{1},y_{2},z_{2})=d\lambda (y_{1})d\lambda (y_{2})d\lambda (z_{2})$ in the second.

Using the conditional H\"older inequality with conjugate exponents $3,3/2$ yields that for every $\{y_{1},y_{2},z_{2}\} \subseteq \X$, with the following conditionings valid $\lambda ^{(Y_{1},Y_{2},Z_{2})}-$a.s., 
%lcomm{made consistent order $y_1,y_2,z_2$, as in the density}
\begin{multline} \label{eq:integrate.for.gamma}
\E
\left[
\mathbf{1}_{\{\mathsf{D}_{1,2}f(Y)\neq 0\}}  \mathsf{D}_{2}f(Z)^{4} | Y_{1}=y_{1},Y_{2}=y_{2},Z_{2}=z_{2}
\right]\\
\leqslant \P(\mathsf{D}_{1,2}f(Y)\neq 0 |  Y_{1}=y_{1},Y_{2}=y_{2},Z_{2}=z_{2}
)^{1/3}
\\ \hspace{2cm}\times  \E
\left[
\mathsf{D}_{2}f(Z)^6 |  Y_{1}=y_{1},Y_{2}=y_{2},Z_{2}=z_{2}
\right]^{2/3}.
\end{multline} 
Either $Z_{2}\stackrel{a.s.}{=}Y_{2}$, and when conditioning on $Y_{2}=y_{2},Z_{2}=z_{2}$ we must take $y_{2}=z_{2}$, or $Y_{2}$ and $Z_{2}$ are independent. In both cases, for $\lambda ^{Y_{1},Y_{2},Z_{2}}$-a.e.  $(y_{1},y_{2},z_{2})$, with ${\cal L}(U)$ denoting the law of $U$, and adopting similar notation for the conditional law, by \eqref{eq:D.and.D} we have
\begin{align*}
{\cal L}\left(
  \mathsf{D}_{1,2}f(Y) | Y_{1}=y_{1},Y_{2}=y_{2},Z_{2}=z_{2}
 \right)= {\cal L}(D_{y_{1},y_{2}}F_{n}(\bpp{n-2})).
\end{align*}

Similarly, separately studying the cases $Y_1\stackrel{a.s.}{=}Z_1$ and $(Y_1,Z_1)$ independent, one has for $\lambda ^{Y_{1},Y_{2},Z_{2}}$-a.e.$ (y_{1},y_{2},z_{2}),$ 
\begin{align*}
{\cal L}\left(\mathsf{D}_{{2}}f(Z) | Y_{1}=y_{1},Y_{2}=y_{2},Z_{2}=z_{2} \right)=
\left\{ \begin{array}{ll}
{\cal L}(D_{z_{2}}F_n(\bpp{n-2} \cup \{y_1\})) &  \mbox{if $Y_{1}\stackrel{a.s.}{=}Z_{1}$}\\
{\cal L}(D_{z_{2}}F_n(\bpp{n-1})) &  \mbox{if $Y_{1},Z_{1}$ are independent.}
\end{array}
\right.
\end{align*}

Applying \eqref{eq:claim-derivative} of Lemma \ref{eq:lemma-moments}  with $x=z_2, \mathcal{A}=\{y_1\}$ and $k=2$ for the first case above, and similarly for the second, shows the final factor in  \eqref{eq:integrate.for.gamma} is bounded by $M$. Now integrating \eqref{eq:integrate.for.gamma}
over  $\lambda^{Y_1,Y_2,Z_2}$ and applying Lemma \ref{lem:stabilization} and Proposition \ref{prop:Rs.exponential.bound} yields
\begin{align*} %\label{eq:bound1-gammaYZ}
\gamma _{Y,Z}(f)
&\leqslant C \int_{\X^{2}}\P(D_{y_{1},y_{2}}F_n(\bpp{n-2})\neq 0)^{1/3}dy_{1}dy_{2} \\
&\leqslant C \int_{\X^2}\P(R_S(y_1;\bpp{n-2};\X) \ge \|y_2-y_1\|)^{1/3}dy_1 dy_2\nonumber \\
&\leqslant C \int_{\X^{2}}C\exp(-c_{\lambda }\kappa (n-2)\|y_{1}-y_{2}\|^{d}/3)dy_{1}dy_{2}\leqslant \frac{C}{n} \nonumber
\end{align*}
for some final constant $C>0$, demonstrating the first inequality in \eqref{eq:Bn.Bnprime.inequalities}.

The second inequality in \eqref{eq:Bn.Bnprime.inequalities}
is proved similarly. Let $Y,Y',Z$ be recombinations of $\{X,X',\widetilde{X}\}$.
Applying the conditional H\"older inequality for a three way product,

\begin{align*}
\gamma '_{Y,Y',Z}(f)&\leqslant \int_{\X^{5}} \P(\mathsf{D}_{1,2}f(Y)\neq 0 | Y_{1}=y_{1},Y_{2}=y_{2},Y'_{1}=y'_{1},Y'_{3}=y'_{3},Z_{2}=z_{2})^{1/6}\\
&\hspace{2cm}\P(\mathsf{D}_{1,3}f(Y')\neq 0 |Y_{1}=y_{1},Y_{2}=y_{2},Y'_{1}=y'_{1},Y'_{3}=y'_{3}, Z_{2}=z_{2} )^{1/6}\\
&\hspace{3cm}\E\left[\mathsf{D}_{{2}}f(Z)^6 | Y_{1}=y_{1} ,Y_{2}=y_{2},Y'_{1}=y'_{1},Y'_{3}=y'_{3},Z_{2}=z_{2} \right]^{2/3}\\
&\hspace{6cm}d\lambda ^{Y_{1},Y_{2},Y'_{1},Y'_{3},Z_{2}}(y_{1},y_{2},y'_{1},y'_{3},z_{2}),
\end{align*}
with the conditionings valid $\lambda ^{Y_{1},Y_{2},Y'_{1},Y'_{3},Z_{2}}$-a.e.
We have, for some $m\in \{0,1,2\}$ and $\mathcal{A} \subseteq \X$ with $|\mathcal{A}|=m$, depending on how the recombination $Z$ is composed,
\begin{align*}
{\cal L}\left( \mathsf{D}_{{2}}f(Z) | Y_{1}=y_{1},Y_{2}=y_{2},Y'_{1}=y'_{1},Y'_{3}=y'_{3},Z_{2}=z_{2} \right) =  {\cal L}(D_{z_{2}}F_n(\bpp{n-1-m}\cup \mathcal{A})),
\end{align*}whenever $Y_2 \stackrel{a.s.}{=} Z_2$, necessitating $y_{2}=z_{2}$, or $Y_{2},Z_{2}$ are independent. Hence, \eqref{eq:claim-derivative} of Lemma \ref{eq:lemma-moments} yields that the last term in the integral is a.e. bounded by $M^{2/3}$.

The values of $Y'_{1},Z_{2}$ are irrelevant to $Y$ once we have conditioned on the values of $Y_{1},Y_{2}.$ Therefore we have
\begin{align*}
\P(\mathsf{D}_{1,2}f(Y )\neq 0 | Y_{1}=y_{1},Y_{2}&=y_{2},Y'_{1}=y'_{1},Y'_{3}=y'_{3},Z_{2}=z_{2})\\
&=\begin{cases} 
\P(D_{y_{1},y_{2}}F_n(\bpp{n-2})\neq 0)\text{ if $Y_{3}$ is independent of $Y'_{3}$}\\
\P(D_{y_{1},y_{2}}F_{n}(\bpp{n-3}\cup \{y'_{3}\})\neq 0)\text{ if }Y_{3}\stackrel{a.s.}{=}Y'_{3}
\end{cases} \\
&\leqslant \P(R_{S}(y_{1},\bpp{n-3})\geqslant \|y_{1}-y_{2}\|) \leqslant C\exp(-c_{\lambda }\kappa (n-3)\|y_{1}-y_{2}\|^{d}),
\end{align*}
where we have used that $R_S(x;\mu;\X)$ stabilizes, from Lemma~\ref{lem:stabilization}, and that
$$\max(R_{S}(y_{1};\bpp{n-2};\X),R_{S}(y_{1};\bpp{n-3}\cup \{y'_{3}\};\X))\leqslant R_{S}(y_{1};\bpp{n-3};\X),$$
justified by the monotonicity property provided by Lemma~\ref{lem:monotonicity}, 
 and Proposition \ref{prop:Rs.exponential.bound}.
Similarly, as the value of $Y_1$ is irrelevant to $Y'$ once we condition on $Y_1'$, and $Y_2'$ will either equal one of $Y_2$ or $Z_2$ a.s., or be independent of both,
for some $m\in \{0,1\}$ and some set $\mathcal{A} $ with $m$ elements, 
\begin{align*}
\P(\mathsf{D}_{1,3}&f(Y')\neq 0 | Y_1=y_1,Y_{2}=y_{2},Y'_{1}=y'_{1},Y'_{3}=y'_{3},Z_{2}=z_{2})\\
&\leqslant 
\P(R_{S}(y'_{1},\bpp{n-2-m}\cup \mathcal{A} )\geqslant \|y'_{1}-y'_{3}\|)\\
&\leqslant C\exp(-c_{\lambda }\kappa (n-3)\|y'_{1}-y'_{3}\|^{d}).
\end{align*}

If $Y_1\stackrel{a.s.}{=}Y_1'$ and $n \ge 4$ we have 
\begin{align*}
\gamma '_{Y,Y',Z}(f)&\leqslant C\int_{\X}\left[ \int_{\X}\exp(-c_{\lambda }\kappa (n-3)\|y_{1}-y_{2}\|^{d}/6)dy_{2} \right]\left[ \int_{\X}\exp(-c_{\lambda }\kappa (n-3)\|y_{1}-y'_{3}\|^{d}/6) dy'_{3}\right]dy_{1}\\
&{  \leqslant C\int_{\X}\left[ \int_{\mathbb{R}^{d}}\exp(-c_{\lambda }\kappa (n-3)\|y_{1}-y_{2}\|^{d}/6)dy_{2} \right]^{2}dy_{1}}\\
&{  = C\int_{\X}\left[ \int_{\mathbb{R}^{d}}\exp(-c_{\lambda }\kappa (n-3)\| y_{2}\|^{d}/6)dy_{2} \right]^{2}dy_{1}}\\
&\leqslant C\int_{\X}\left[ (n-3)^{-1}\int_{\mathbb{R}^d}\exp(-c_{\lambda }\kappa \|y_{1}-y_{2}\|^{d}/6)dy_{2} \right]^{2}dy_{1}\\
&\leqslant \frac{C}{n^2}.
\end{align*}If $Y_{1}$ and $Y'_{1}$ are independent,
\begin{multline*}
 \gamma '_{Y,Y',Z}(f)\\
 \leqslant  C\int_{\X^{2}}\exp(-c_{\lambda }\kappa (n-3)\|y_{1}-y_{2}\|^{d}/6)dy_{1}dy_{2}\int_{\X^{2}}\exp(-c_{\lambda }\kappa(n-3) \|y'_{1}-y'_{3}\|^{d}/6)dy'_{1}dy'_{3}\\
=C\left[  \int_{\X}\left[ \int_{\X}\exp(-c_{\lambda }\kappa (n-3)\|y_{1}-y_{2}\|^{d}/6)dy_{1} \right]dy_{2}\right]^{2}\\
\leqslant C\left[  \int_{\X}\left[ (n-3)^{-1}\int_{\mathbb{R}^{d}}\exp(-c_{\lambda }\kappa \| y_{2}\|^{d}/6)dy_{1} \right]dy_{2}\right]^{2}
\leqslant \frac{C}{n^{2}}.
  \end{multline*}
In both cases, $B'_{n}(f)\leqslant C/n^{2}$, which concludes the proof.
\end{proof}
 
\section{Variance lower bounds}\label{sec:vlb}

In this section, we prove Theorem~\ref{prop:variance}, providing a lower bound on $\var L(\eta_t)$ under broad conditions on the collection of forbidden regions. One key step of the proof, accomplished in Lemma~\ref{lem:influential}, is to show that
if the input process is split into two independent processes then the first process is likely to contain many \emph{influential} point pairs. Intuitively, a point pair $(x,y) \in \mathbb{R}^d \times \mathbb{R}^d$ is influential if an additional process point falling in the vicinity of $x$ produces an effect on $L$ that differs from its effect had the point fallen in the vicinity of $y$. To prove Theorem~\ref{prop:variance}, 
we show that conditional on the first process containing many influential pairs, the effect of adding the second process contributes at least an amount $\Omega(t)$, a quantity satisfying $\liminf_{t \rightarrow \infty}\Omega(t)/t>0$, to the variance of $L(\eta_t)$.

Throughout this section we assume that the function $\psi$ used to define $L$ in \eqref{eq:defLalpha} satisfies the hypotheses of Theorem \ref{prop:variance}. In addition, we will be working at a different scale from the rest of the paper, considering Poisson and binomial processes of constant intensity on a growing space,  rather than of growing intensity on a fixed space. The reason for using this scaling is that we will need to consider the limiting case of a Poisson process on $\mathbb{R}^d$. In particular, in this section, for any $t\geq 1$, we let $\Ppp{t}$ denote
a homogeneous Poisson point process on $t^{1/d}\X$ with intensity $1$, and let $\bpp{t}$ denote a binomial process of $\lceil t\rceil$ points independently and uniformly placed in $t^{1/d} \X$. We couple all $\Ppp{t}$ by defining $\Ppp{t}=\Ppp{\infty}\cap t^{1/d}\X$ where
$\Ppp{\infty}$ is a homogeneous Poisson point process on $\RR^d$ of intensity $1$. 
 
We assume throughout that $\X$ is star shaped with star center at the origin, and contains an open set around the origin. The first property implies that $s^{1/d}\X\subseteq t^{1/d}\X$
	if $s\leq t$, and the second that for all $x \in \mathbb{R}^d$ and $r>0$ that there exists a finite value $t_0(x,r)$ such that
%$0\in\X$ and $\X$ is convex. This implies that %$s^{1/d}\X\subseteq t^{1/d}\X$
%if $s\leq t$. In particular, 
\begin{align}
  B(x,r)\subseteq t^{1/d}\X\qquad\quad\quad\text{for all $t>t_0(x,r)$.}\label{eq:growth}
\end{align}

Before stating the following result we recall the definition of $E_x^{\pm}(\mu)$ from \eqref{def:Epm}, and inform the reader that the constant $r_0$ may take on different values in the statements below. 
%We write $t_0$ for $t_0(x,r)$ when there is no danger of confusion. 
\begin{proposition}\label{prop:Epm}
    Assume that the forbidden regions satisfy the scaled ball condition (Assumption \ref{assumption:scaledball}) for some { fixed} $\delta>0$ and all $x \in \mathbb{R}^d$ and positive $t,r$ when the 
    role of $\X$ is played by $t^{1/d}\X\cap B(x,r)$.
    Then for any $\epsilon>0$, there exists $r_0$ such that for all $r>r_0$, all
    $x\in \RR^d$ and all $t \in (t_0(x,r),\infty]$,
    \begin{align}
      \P\Bigl(   \label{eq:EpmPoisson}
        E^{\pm}_x(\Ppp{t}) = E^{\pm}_x(\Ppp{\infty}\cap B(x,r))\Bigr) &\geq 1-\epsilon,\\
     \intertext{and for all $t \in (t_0(x,r),\infty)$,}
      \P\Bigl( E^{\pm}_x(\bpp{t}) = E^{\pm}_x(\bpp{t}\cap B(x,r)) \Bigr)
       &\geq 1-\epsilon.  %\label{eq:Epmbinomial}
    \end{align}
\end{proposition}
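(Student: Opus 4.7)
The plan is to show that if $\mu$ denotes the process in question (viewed in its ambient window $\X_1$, either $t^{1/d}\X$ or $\mathbb{R}^d$) and $\nu=\mu\cap B(x,r)$, then with probability at least $1-\epsilon$ we have $E_x^{\pm}(\mu)=E_x^{\pm}(\nu)$. For $t>t_0(x,r)$ (automatic when $t=\infty$), $B(x,r)\subseteq\X_1$, so this $\nu$ equals $\Ppp{\infty}\cap B(x,r)$ or $\bpp{t}\cap B(x,r)$ as the proposition requires.

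I would work on the intersection of
\[
A_1=\bigl\{R_S(\{x\};\mu;\X_1)\le r\bigr\}
\quad\text{and}\quad
A_2=\bigl\{\text{no ball of radius }\delta r/\mathcal{D}\text{ in }B(x,r)\text{ is empty of }\mu\bigr\}.
\]
Proposition~\ref{prop:Rs.exponential.bound} bounds $\P(A_1^c)$ exponentially in $r^d$ (the case $t=\infty$ by taking limits or by a direct repetition of its proof with $\X_1=\mathbb{R}^d$). An $O((\mathcal{D}/\delta)^d)$-cell lattice union bound, arguing exactly as in that proof, yields an analogous bound on $\P(A_2^c)$. Both decay uniformly in $x$ and $t$, so for $r$ exceeding a threshold depending only on $\epsilon$, $d$, $\delta$, and $\mathcal{D}$, their union has probability at most $\epsilon$.

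On $A_1\cap A_2$ I would verify $E_x^{\pm}(\mu)=E_x^{\pm}(\nu)$ by case analysis on a hypothetical discrepant pair. If a pair $\{w,z\}$ (or $\{x,z\}$) in the symmetric difference has an endpoint outside $\overline{B(x,r)}$, then the chain $\{x,w,z\}\subseteq\overline{S(w,z)}\subseteq\overline{\mathcal{R}_S(\{x\};\mu;\X_1)}$ coming from \eqref{eq:S.basic.prop} and \eqref{def:calRsubS} contradicts $A_1$. Otherwise both endpoints lie in $B(x,r)$ but $S(w,z)$ reaches outside $B(x,r)$; then $x$ and some $p\in\mu\setminus B(x,r)$ both lie in $\overline{S(w,z)}$, so \eqref{eq:def.diameter} forces $\|w-z\|>r/\mathcal{D}$, and the scaled ball condition applied with $\X$ replaced by $t^{1/d}\X\cap B(x,r)$ places a ball of radius $\delta\|w-z\|>\delta r/\mathcal{D}$ inside $S(w,z)\cap B(x,r)$ that is empty of $\mu$, contradicting $A_2$. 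The binomial case is handled identically using the binomial branches of Proposition~\ref{prop:Rs.exponential.bound} and of the empty-cell estimate \eqref{eq:bin.empty.cell}.

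The main obstacle is the spurious-edge phenomenon captured by $A_2$: restricting $\mu$ to $B(x,r)$ can produce new edges whose forbidden regions, viewed in the full process, are actually filled by points outside $B(x,r)$. Because the stabilization radius is not monotone in $\mu$ in the direction that would let one bound $R_S$ of the restricted process by $R_S$ of the full process, this cannot be folded into a single stabilization estimate, and the separate large-empty-ball event $A_2$ is what closes the gap.
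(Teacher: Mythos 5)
Your proof is correct, and it takes a genuinely different route from the paper's. The paper works with two region-comparison events: $\Phi$, that the stabilizing region $\mathcal{R}_S$ for the full process (window $t^{1/d}\X$) sticks out of the one for the restricted process (window $B(x,r)$), and $\Psi$, the reverse inclusion failure. It bounds both via the lattice argument of Lemma~\ref{lem:bd.OmPs}, then hands off the graph-theoretic conclusion to Lemma~\ref{lem:same.change.graph}, which requires the two $\mathcal{R}_S$ sets to agree and the processes to agree on their closure. Your decomposition into $A_1$ (stabilization radius at most $r$, which controls the analogue of $\Phi$) and $A_2$ (no large empty ball inside $B(x,r)$, which controls the analogue of $\Psi$) is differently parametrized: $A_1$ is a clean statement about a single process rather than a region comparison, so its tail bound falls more directly out of the existing stabilization machinery, and $A_2$ is a purely geometric event about $\mu$ alone. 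You then verify $E_x^{\pm}(\mu)=E_x^{\pm}(\nu)$ by an explicit case split on discrepant edges rather than invoking Lemma~\ref{lem:same.change.graph}; the case analysis is complete (the endpoint-outside-$B(x,r)$ case kills $E^\pm_x(\mu)\setminus E^\pm_x(\nu)$ via $A_1$, and the spurious-edge case kills $E^\pm_x(\nu)\setminus E^\pm_x(\mu)$ via $A_2$). Both approaches ultimately rest on the same large-empty-ball lattice union bound, applied at the constant-intensity/growing-window scale rather than the scale at which Proposition~\ref{prop:Rs.exponential.bound} is stated, which both you and the paper handle by a routine rerun of that proof. Your closing observation---that the monotonicity from Lemma~\ref{lem:monotonicity} runs in the wrong direction to bound the restricted-process radius by the full-process radius, so a second, empty-ball event is unavoidable---is exactly the right diagnosis of why the paper needs the $\Psi$ event and you need $A_2$.
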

Before proving Proposition \ref{prop:Epm}, first observe that \eqref{eq:EpmPoisson} could be equivalently stated with $\Ppp{t}$ appearing
    instead of $\Ppp{\infty}$, since if $B(x,r)\subseteq t^{1/d}\X$, then
    $\Ppp{t}\cap B(x,r)=\Ppp{\infty}\cap B(x,r)$.
    
For $x\in \mathbb{R}^{d}$, $r>0, t > t_0(x,r)$ and a point process $\mu$, define the events
    \begin{align*}
      \Phi(x,r,t,\mu)&=\bigl\{\mathcal{R}_S\bigl(x;\mu;t^{1/d}\X\bigr)\cap 
      \mathcal{R}_S\bigl(x;\mu\cap B(x,r); B(x,r)\bigr)^c\neq\emptyset\bigr\}\\\intertext{and}
      \Psi(x,r,t,\mu) &= \bigl\{\mathcal{R}_S\bigl(x;\mu\cap B(x,r); B(x,r)\bigr)
        \cap\mathcal{R}_S\bigl(x;\mu;t^{1/d}\X\bigr)^c\neq\emptyset\bigr\}.
    \end{align*}
    Note that since $t>t_0(x,r)$, we have $B(x,r)\subseteq t^{1/d}\X$.
    Thus, to picture these events, start with the point process restricted to the viewing
    window $B(x,r)$, and consider the region $\mathcal{R}_S\bigl(x;\mu\cap B(x,r); B(x,r)\bigr)$ 
    that is affected by the addition of $x$ to $\mu$. The first event is that this affected region grows when 
    we expand the window to $t^{1/d}\X$, and the second event is that it shrinks.
    To prove Proposition~\ref{prop:Epm} we require the following result showing that these
    events are unlikely.

    \begin{lemma} \label{lem:bd.OmPs}
      Under the hypotheses of Proposition \ref{prop:Epm}, given any $\epsilon>0$, there exists $r_0$ such that for all $r > r_0$  and $x\in \mathbb{R}^{d}$
       \begin{align*}
       \P\bigl(\Phi(x,r,t,\Ppp{t})\bigr)<\epsilon/2 \qmq{and}
       \P\bigl(\Psi(x,r,t,\Ppp{t})\bigr)<\epsilon/2 \qmq{for $t\in (t_0(x,r),\infty]$,}
       \end{align*}
       and
              \begin{align*}
              \P\bigl(\Phi(x,r,t,\bpp{t})\bigr)<\epsilon/2 \qmq{and}
              \P\bigl(\Psi(x,r,t,\bpp{t})\bigr)<\epsilon/2 \qmq{for $t\in (t_0(x,r),\infty)$.}
              \end{align*}
    \end{lemma}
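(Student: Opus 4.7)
The plan is to show that $\Phi$ and $\Psi$ are each contained in an event of the form ``a stabilization radius of $L$ at $x$ exceeds $r$,'' and then apply the exponential tail estimate of Proposition~\ref{prop:Rs.exponential.bound} (with a small adaptation for the binomial $\Psi$ case). The hypothesis of Theorem~\ref{prop:variance} that the scaled ball condition holds with the role of $\X$ played by $t^{1/d}\X\cap B(x,r)$ for any $x,r,t$ ensures in particular that Assumption~\ref{assumption:scaledball} holds on each of the windows $t^{1/d}\X$, $B(x,r)$, and $\mathbb{R}^d$ that appear below.

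The reductions are
\[
\Phi(x,r,t,\mu)\subseteq\bigl\{R_S(x;\mu;t^{1/d}\X)\geq r\bigr\}
\quad\text{and}\quad
\Psi(x,r,t,\mu)\subseteq\bigl\{R_S(x;\mu\cap B(x,r);B(x,r))>r\bigr\}.
\]
If $R_S(x;\mu;t^{1/d}\X)<r$, every $S(w,z)$ contributing to $\mathcal{R}_S(x;\mu;t^{1/d}\X)$ lies in $B(x,r)$, and by \eqref{eq:S.basic.prop} so do $w$ and $z$; then $S(w,z)\cap\mu\cap B(x,r)=S(w,z)\cap\mu=\emptyset$ shows $S(w,z)\subseteq\mathcal{R}_S(x;\mu\cap B(x,r);B(x,r))$, yielding the inclusion $\mathcal{R}_S(x;\mu;t^{1/d}\X)\subseteq\mathcal{R}_S(x;\mu\cap B(x,r);B(x,r))$ and negating $\Phi$. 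For $\Psi$, pick a witness $p\in\mathcal{R}_S(x;\mu\cap B(x,r);B(x,r))\setminus\mathcal{R}_S(x;\mu;t^{1/d}\X)$ lying in some $S(w,z)$ with $\{w,z\}\subseteq B(x,r)\subseteq t^{1/d}\X$ and $x\in\overline{S(w,z)}$. The condition $p\notin\mathcal{R}_S(x;\mu;t^{1/d}\X)$ forces $S(w,z)\cap\mu\neq\emptyset$ (else $(w,z)$ itself would make $p$ a member), so combined with $S(w,z)\cap\mu\cap B(x,r)=\emptyset$ there exists $q\in\mu\cap S(w,z)$ with $\|q-x\|>r$; since $q\in S(w,z)\subseteq\mathcal{R}_S(x;\mu\cap B(x,r);B(x,r))$, we obtain $R_S(x;\mu\cap B(x,r);B(x,r))\geq\|q-x\|>r$.

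For the Poisson cases with $t<\infty$, Proposition~\ref{prop:Rs.exponential.bound} applied to unit Poisson on $t^{1/d}\X$ (taking $\lambda$ uniform on $t^{1/d}\X$ and intensity parameter $|t^{1/d}\X|$, so that $c_\lambda$ times the intensity parameter equals $1$) gives $\P(R_S(x;\Ppp{t};t^{1/d}\X)\geq r)\leq C\exp(-\kappa' r^d)$ with $\kappa'$ depending only on $d,\delta,\mathcal{D}$; applied similarly to $\Ppp{t}\cap B(x,r)=\Ppp{\infty}\cap B(x,r)$ on $B(x,r)$, it gives $\P(R_S(x;\Ppp{t}\cap B(x,r);B(x,r))>r)\leq C\exp(-\kappa' r^d)$. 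For $t=\infty$ on $\mathbb{R}^d$ the lattice argument from the proof of Proposition~\ref{prop:Rs.exponential.bound} transports verbatim, since Lemma~\ref{lem:emptyspace} produces an empty ball of radius $s\delta/\mathcal{D}$ inside $B(x,2s)$ whenever $s<R_S\leq 2s$, which has probability $\exp(-\pi_d(s\delta/\mathcal{D})^d)$ under unit Poisson. For the binomial case, Proposition~\ref{prop:Rs.exponential.bound} applied to $\bpp{t}$ on $t^{1/d}\X$ handles $\Phi$; for $\Psi$, $\bpp{t}\cap B(x,r)$ has a random size $N$, so I condition on $N$ to make $\bpp{t}\cap B(x,r)\mid N$ a uniform binomial on $B(x,r)$ to which the proposition applies, yielding $\P(R_S>r\mid N)\leq C\exp(-c N)$. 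Since $N\sim\Bin(\lceil t\rceil,|B(x,r)|/|t^{1/d}\X|)$ has mean at least $\pi_d r^d/|\X|$ uniformly in $t\geq 1$, averaging the MGF bound gives $\E[\exp(-cN)]\leq \exp(-c' r^d)$. Choosing $r_0$ so that all resulting bounds are less than $\epsilon/2$ finishes the proof; the main obstacle is the binomial $\Psi$ reduction, where the restricted process does not fit the hypotheses of Proposition~\ref{prop:Rs.exponential.bound} verbatim and requires the conditioning step.
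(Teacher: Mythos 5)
Your proof is correct, but it takes a genuinely different route from the paper's. You first prove the containments $\Phi(x,r,t,\mu)\subseteq\{R_S(x;\mu;t^{1/d}\X)\geq r\}$ and $\Psi(x,r,t,\mu)\subseteq\{R_S(x;\mu\cap B(x,r);B(x,r))>r\}$, and then invoke Proposition~\ref{prop:Rs.exponential.bound} as a black box (after the book-keeping needed to match the proposition's $t\lambda$ normalization to the unit-intensity processes used in this section). Both containments are carefully argued: for $\Phi$, the key observation that $R_S(x;\mu;t^{1/d}\X)<r$ forces $\{w,z\}\subseteq\overline{S(w,z)}\subseteq B(x,r)$ via \eqref{eq:S.basic.prop} is exactly right, and for $\Psi$, producing the witness $q\in\mu\cap S(w,z)\setminus B(x,r)$ and noting $q\in\mathcal{R}_S(x;\mu\cap B(x,r);B(x,r))$ gives the claimed inequality. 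The paper instead reproves the empty-ball/lattice estimate in situ, using dyadic events $\widehat{\Phi}(u)$ that retain conditions (a)--(c) but drop (d), which lets the same dyadic union bound handle $\Phi$ directly; $\Psi$ is then handled by noting a diameter-$r$ forbidden region produces an empty ball of radius $\delta r/\mathcal{D}$ in $B(x,r)$ and running the same lattice count against the \emph{full} $\bpp{t}$. The main payoff of the paper's approach is exactly what you flag as the obstacle: because it never needs a tail bound for $R_S$ of the \emph{restricted} process $\bpp{t}\cap B(x,r)$, it avoids the conditioning-on-$N$ step your binomial $\Psi$ argument requires. Your conditioning argument is correct — $N\sim\Bin(\lceil t\rceil,\pi_d r^d/(t|\X|))$, the conditional bound $C\exp(-cN)$ with $c=\kappa/\pi_d$ independent of $r$, and the MGF estimate $\E\exp(-cN)\leq\exp(-\pi_d(1-e^{-c})r^d/|\X|)$ are all right — but it is more work than the paper's direct treatment. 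Your modular reduction has the conceptual advantage of making explicit that $\Phi$ and $\Psi$ are literally stabilization-radius tail events, which the paper's proof only implicitly uses.
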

    \begin{proof}
      We use the same argument as in Proposition~\ref{prop:Rs.exponential.bound}.
      Suppose that $\Phi(x,r,t,\mu)$ holds for $\mu=\Ppp{t}$ or $\mu=\bpp{t}$. 
      Then there exist points $\{w,z\}$ such that
      \begin{enumerate}\renewcommand{\theenumi}{\alph{enumi}}\renewcommand*\labelenumi{(\theenumi)}
      	\item $\{w,z\}\subseteq t^{1/d}\X$;\label{i0}
        \item $S(w,z)\cap \mu = \emptyset$; \label{i1}
        \item $x\in\overline{S(w,z)}$;      \label{i2}
        \item $S(w,z)\not\nsubseteq \mathcal{R}_S\bigl(x; \mu\cap B(x,r); B(x,r)\bigr)$. \label{i3}
      \end{enumerate}
      If $\{w,z\}\subseteq B(x,r)$, then (\ref{i3}) is a contradiction. Thus either 
      $\norm{w-x}>r$ or $\norm{z-x}>r$.
      For $u>0$, let $\widehat{\Phi}(u)$ be the event that there exists $\{w,z\}$ such that 
      (\ref{i0})--(\ref{i2}) hold and
      \begin{align*}
        u<\max\bigl(\norm{w-x},\norm{z-x}\bigr)\leq 2u.
      \end{align*}
      We have now shown that if $\Phi(x,r,t,\mu)$ holds, then
      there exist points $\{w,z\}$ such that (\ref{i0})--(\ref{i2}) hold
      and $\max(\norm{w-x},\norm{z-x})>r$, implying that
      \begin{align} \label{eq:Eunion}
      \Phi(x,r,t,\mu)\subseteq \bigcup_{i=0}^{\infty}\widehat{\Phi}(2^ir).  
      \end{align}
    
      For a given $u>0$ we bound the probability of $\widehat{\Phi}(u)$ and apply a union bound.
      If $\widehat{\Phi}(u)$ holds, then $\{w,z\}\subseteq t^{1/d}\X\cap B(x,2u)$, and $S(w,z)$
      contains no points of $\mu$ and has diameter at least $u$. By the scaled ball condition,
      with the role of $\X$ played by $t^{1/d}\X\cap B(x,2u)$, 
      the set $S(w,z)\cap t^{1/d}\X\cap B(x,2u)$ contains a ball of radius $\delta u/\mathcal{D}$.
      Thus, $\widehat{\Phi}(u)$ implies the existence of a ball of radius $\delta u/\mathcal{D}$
      within $t^{1/d}\X\cap B(x,2u)$ containing no points of $\mu$. Every ball of radius
      $\delta u/\mathcal{D}$ contains a cell of the lattice $(\delta u/\mathcal{D} \sqrt{d})\mathbb{Z}^d$,
      and by considering the volume of $B(x,2u)$, the set $t^{1/d} \X \cap B(x,2u)$ contains at most
      \begin{align*}
        \frac{\pi_d (2u)^d}{(\delta u/\mathcal{D} \sqrt{d})^d} = \frac{\pi_d(2\mathcal{D}\sqrt{d})^d}{\delta^d}
      \end{align*}
      cells of this lattice. 
      Bounding $\widehat{\Phi}(u)$ by the event that all of these cells have no points of $\mu$,
      in the case $\mu=\Ppp{t}$, recalling that $\Ppp{t}$ has intensity 1, 
      \begin{align*}
        \P(\widehat{\Phi}(u)) &\leq \frac{\pi_d(2\mathcal{D}\sqrt{d})^d}{\delta^d}\exp\left(- 
          \kappa u^d\right),
      \end{align*}
      where $\kappa = (\delta/\mathcal{D} \sqrt{d})^d$
      If $\mu=\bpp{t}$, a similar statement holds, as
      \begin{align*}
        \P(\widehat{\Phi}(u)) &\leq  \frac{\pi_d(2\mathcal{D}\sqrt{d})^d}{\delta^d}\biggl(1-\frac{\kappa u^d}{\abs{\X}\lceil t\rceil}
            \biggr)^{\lceil t\rceil}\leq \frac{\pi_d(2\mathcal{D}\sqrt{d})^d}{\delta^d}\exp\biggl(- 
          \frac{\kappa u^d}{\abs{\X}}\biggr).
      \end{align*}
      Applying the union bound in \eqref{eq:Eunion} followed by these two inequalities, and then bounding the resulting sum 
      by a geometric series as in Proposition~\ref{prop:Rs.exponential.bound},
      shows that in either case we have
      $\P\bigl(\Phi(x,r,t,\mu)\bigr)\leq C e^{-c r^d}$ for constants $C$ and $c$.
      Now choose $r_0$ such that this upper bound is less than $\epsilon/2$ for $r>r_0$.
      
      Bounding $\Psi(x,r,t,\Ppp{t})$ and $\Psi(x,r,t,\bpp{t})$ is similar.
      If $\Psi(x,r,t,\mu)$ holds, then there must exist $\{w,z\}\subseteq B(x,r)$
      with $x\in\overline{S(w,z)}$ such that
          \begin{align*}
          S(w,z)\cap\mu\cap B(x,r)=\emptyset \qmq{but} S(w,z)\cap\mu \neq\emptyset.
          \end{align*}
      These relations imply that $S(w,z)$ extends outside of $B(x,r)$, which means that $S(w,z)$ has diameter at least $r$. Hence, by the scaled ball condition with the role of $\X$ played by 
      $t^{1/d}\X\cap B(x,r)=B(x,r)$,
      the set $S(w,z) \cap B(x,r)$ contains a ball of radius $\delta r/{\cal D}$.
      Thus, there exists a ball of radius $\delta r/{\cal D}$ containing no points of $\mu$, and one may now argue as for $\Phi(x,r,t,\mu)$.
    \end{proof}

\begin{proof}[Proof of Proposition~\ref{prop:Epm}]
For $\epsilon>0$ let $r_0$ be given as in Lemma \ref{lem:bd.OmPs}. 
    For $\mu=\Ppp{t}$ or $\mu=\bpp{t}$, for all $r\geq r_0$, $x\in \mathbb{R}^{d}$, and $t>t_0(x,r)$, it holds except on an event of probability at most $\epsilon$
    that $\mathcal{R}_S(x;\mu;t^{1/d}\X)=\mathcal{R}_S(x;\mu\cap B(x,r); B(x,r))$.
    By Lemma~\ref{lem:same.change.graph}, on this event
    $E^{\pm}(\mu)=E^{\pm}\bigl(\mu\cap B(x,r)\bigr)$.
\end{proof}

Since $\G(\Ppp{\infty})$ is an infinite graph, $L(\Ppp{\infty})$ does not exist in general.
However, when $E^{\pm}_x(\Ppp{\infty})$ is finite we may define $D_xL(\Ppp{\infty})$ by the difference
\begin{align*} %\label{eq:difference-derivative}
  D_xL(\Ppp{\infty}) = \sum_{\{x,y\} \in E^+_x(\Ppp{\infty})}\psi(x,y) - \sum_{\{w,z\} \in E^-_x(\Ppp{\infty})}\psi(w,z).
\end{align*}
The following corollary implies that $D_xL(\Ppp{\infty})$ 
is also the almost surely finite limit of $D_xL(\Ppp{\infty}\cap B(x,r))$ as $r\to\infty$.
  \begin{corollary}\label{cor:weakstab} For all $x \in \RR^d$ the set $E_x^\pm(\Ppp{\infty})$ is finite almost surely, and for
    any $\epsilon>0$ there exists $r_0$ such that for all $r>r_0$ 
    \begin{align} \label{DxisDxB}
\P\Bigl( D_x L(\Ppp{\infty}) = D_x L(\Ppp{\infty}\cap B(x,r))\Bigr) &\geq 1-\epsilon.
    \end{align}
  \end{corollary}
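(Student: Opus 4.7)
The plan is to derive this corollary as an essentially immediate consequence of Proposition~\ref{prop:Epm}, specialized to $t=\infty$. The crucial observation is that $\Ppp{\infty}\cap B(x,r)$ is almost surely a finite point set (a unit-intensity Poisson process intersected with a bounded set), so $E_x^{\pm}(\Ppp{\infty}\cap B(x,r))$ is almost surely a finite collection of edges. Invoking Proposition~\ref{prop:Epm} with $t=\infty$, for any $\epsilon>0$ there exists $r_0$ such that for all $r>r_0$ the event $\{E_x^{\pm}(\Ppp{\infty})=E_x^{\pm}(\Ppp{\infty}\cap B(x,r))\}$ has probability at least $1-\epsilon$, and on this event $E_x^{\pm}(\Ppp{\infty})$ inherits finiteness from the right-hand side.

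Almost-sure finiteness of $E_x^{\pm}(\Ppp{\infty})$ then follows by letting $\epsilon\to 0$: the preceding argument shows $\P(E_x^{\pm}(\Ppp{\infty})\text{ is finite})\geq 1-\epsilon$ for every $\epsilon>0$, forcing this probability to equal one. Equivalently, one may apply Proposition~\ref{prop:Epm} along a sequence $\epsilon_n\downarrow 0$ with corresponding radii $r_n$ and take a limit.

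For the displayed inequality \eqref{DxisDxB}, I would observe that on the high-probability event $E_x^{\pm}(\Ppp{\infty})=E_x^{\pm}(\Ppp{\infty}\cap B(x,r))$, the two finite sums defining $D_xL(\Ppp{\infty})$ agree term-by-term with the analogous sums for the finite configuration $\Ppp{\infty}\cap B(x,r)$, namely with
\begin{align*}
D_xL(\Ppp{\infty}\cap B(x,r)) = \sum_{\{x,y\}\in E_x^+(\Ppp{\infty}\cap B(x,r))}\psi(x,y) - \sum_{\{w,z\}\in E_x^-(\Ppp{\infty}\cap B(x,r))}\psi(w,z),
\end{align*}
which comes from the decomposition of $D_xL$ already used in the proof of Lemma~\ref{lem:DxF.bound.in.alpha}. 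No substantive obstacle arises here, since Proposition~\ref{prop:Epm} does the real work; the only care needed is to note that the same threshold $r_0$ supplied by that proposition serves both conclusions of the corollary simultaneously.
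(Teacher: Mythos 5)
Your proof is correct and follows essentially the same route as the paper: both invoke Proposition~\ref{prop:Epm} with $t=\infty$ to get the high-probability event $E_x^{\pm}(\Ppp{\infty})=E_x^{\pm}(\Ppp{\infty}\cap B(x,r))$, deduce \eqref{DxisDxB} and finiteness of $E_x^{\pm}(\Ppp{\infty})$ on that event, and let $\epsilon\to 0$ for almost-sure finiteness. Your remark that $\Ppp{\infty}\cap B(x,r)$ is a.s.\ a finite configuration (so the right-hand side is a.s.\ finite) makes explicit a point the paper leaves implicit, which is a minor stylistic improvement but not a different argument.
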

  \begin{proof}
    Inequality~\eqref{eq:EpmPoisson} of Proposition~\ref{prop:Epm} with $t=\infty$ yields an $r_0$ such that $E_x^\pm(\Ppp{\infty})=E_x^\pm(\Ppp{\infty} \cap B(x,r))$ for all $x \in \mathbb{R}^d$ and $r>r_0$ with probability at least $1-\epsilon$, proving that \eqref{DxisDxB} holds. On the event
    that $D_x L(\Ppp{\infty}) = D_x L(\Ppp{\infty}\cap B(x,r))$, the quantity 
     $E_x^\pm(\Ppp{\infty})$ is finite. 
     Thus $E_x^\pm(\Ppp{\infty})$ is finite with probability at least 
     $1-\epsilon$. Since $\epsilon$ is arbitrary, $E_x^\pm(\Ppp{\infty})$ is finite with probability one.
  \end{proof}

We will use the next lemma to replace binomial processes with Poisson processes on large regions.
  \begin{lemma}\label{lem:Poisson.binomial.convergence}
    For any bounded measurable set $A \subseteq \mathbb{R}^d$, as $t\to\infty$
    \begin{align*}
      \bpp{t}\cap A \to \Ppp{\infty}\cap A
    \end{align*}
    in total variation.
  \end{lemma}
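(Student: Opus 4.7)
The plan is to reduce the statement to the classical Poisson approximation of the binomial distribution by decomposing each point process on $A$ into a count and a conditionally i.i.d.\ sequence of positions.

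First, using \eqref{eq:growth}, fix $t_0$ large enough so that $A\subseteq t^{1/d}\X$ for all $t>t_0$; below I restrict to such $t$. The restriction $\bpp{t}\cap A$ can then be described as follows: each of the $\lceil t\rceil$ uniform points of $\bpp{t}$ lies in $A$ independently with probability $p_t:=|A|/(t\abs{\X})$, and conditionally on lying in $A$ is uniformly distributed there. Hence $N_t:=\abs{\bpp{t}\cap A}\sim\Bin(\lceil t\rceil,p_t)$, and conditionally on $N_t=n$, the $n$ points are i.i.d.\ uniform on $A$. By the standard properties of Poisson processes, $\Ppp{\infty}\cap A$ admits the same description with the count replaced by $M\sim\Poi(\abs{A})$.

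Second, I would invoke the fact that, since the conditional distribution of the positions given the count is the same in both cases, the total variation distance between the two point processes coincides with the total variation distance between the two count distributions: for any two simple point processes obtained as mixtures with identical conditional laws given counts $n_1=n_2$,
\[
d_{TV}\bigl(\bpp{t}\cap A,\,\Ppp{\infty}\cap A\bigr)=d_{TV}(N_t,M).
\]
This follows from a coupling argument in which one first couples $N_t$ and $M$ optimally and then, on the event $N_t=M$, uses the same realization of i.i.d.\ uniform points.

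Third, the convergence $d_{TV}(N_t,M)\to 0$ is classical. By Le Cam's inequality,
\[
d_{TV}\bigl(\Bin(\lceil t\rceil,p_t),\Poi(\lceil t\rceil p_t)\bigr)\leq \lceil t\rceil p_t^2 = O(1/t),
\]
and since $\lceil t\rceil p_t\to\abs{A}/\abs{\X}$ (matching $\abs{A}$ under the normalization $|\X|=1$ implicit in this section), combined with continuity of $\lambda\mapsto\Poi(\lambda)$ in total variation, $d_{TV}(N_t,M)\to0$. There is no real obstacle: the only care required is matching the mean of the binomial count with the Poisson rate, which is handled by the choice of scaling $t^{1/d}\X$ already in place.
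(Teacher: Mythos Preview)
Your proposal is correct and follows essentially the same approach as the paper: both reduce to the total variation convergence of the binomial count $\Bin(\lceil t\rceil,p_t)$ to the Poisson count $\Poi(|A|)$, using that conditional on the count the points are i.i.d.\ uniform on $A$ in either model. The paper simply invokes this binomial--Poisson convergence as ``well known'' and builds the coupling directly, whereas you spell out Le~Cam's inequality and the continuity of $\lambda\mapsto\Poi(\lambda)$; you also make explicit the implicit normalization $|\X|=1$ needed for the means to match, which the paper glosses over by writing $\Bin(t,|A|/t)$.
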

  \begin{proof}
    Let $M$ and $N$ be the number of points of $\bpp{t}$ and $\Ppp{t}$ that fall in $A$, respectively.
    Once $t$ is large enough that $A\subseteq t^{1/d}\X$, 
    the distribution of $M$ is $\Bin(t,\vol{A}/t)$, and the distribution of $N$ is 
    $\Poi(\vol{A})$. 
    It is well known that this binomial distribution converges in total variation to this Poisson
    distribution, and so $M$ and $N$ can be coupled so that they are equal with probability approaching
    $1$ as $t\to\infty$. As $\bpp{t}\cap A$ can be represented as $M$ points uniformly distributed over $A$
    and $\Ppp{\infty}\cap A$ as $N$ points uniformly distributed over $A$, the two point processes can
    be coupled to be equal with probability tending to $1$.
  \end{proof}

The next piece of the proof is to show that $D_xL(\Ppp{\infty})$ is nondeterministic. For any concrete
collection of forbidden regions, this is typically straightforward, but to show it in more
generality we need to present some technical arguments.

\begin{lemma}\label{lem:regular.points}
  Suppose $E=\interior \overline{E}$. Then for all $x\in\partial E$, every open neighborhood
  of $x$ intersects the interiors of $E$ and $E^c$.
\end{lemma}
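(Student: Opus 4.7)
The plan is to unpack the two claims separately. Note first that $E$ is open (being the interior of a set), so $\interior E = E$, and moreover $\partial E = \overline{E}\setminus E$ since the interior of an open set is itself.

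For the claim about $\interior E$: since $x\in\partial E\subseteq\overline{E}$, every open neighborhood $U$ of $x$ meets $E$, and this intersection is contained in $\interior E = E$. So $U\cap\interior E\neq\emptyset$ with no appeal to the regularity hypothesis $E=\interior\overline{E}$.

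For the claim about $\interior E^c$: this is where the hypothesis $E=\interior\overline{E}$ is used. Suppose toward contradiction that some open neighborhood $U$ of $x$ misses $\interior E^c$. Since the complement of $\interior E^c$ is $\overline{E}$, this gives $U\subseteq\overline{E}$. But $U$ is open, hence $U\subseteq\interior\overline{E}=E$. In particular $x\in E$, contradicting $x\in\partial E = \overline{E}\setminus E$.

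Both parts together will give the conclusion, and the only step that truly uses the regularity of $E$ is the short contradiction argument in the second part. There is no real obstacle here; the lemma is essentially a direct unwinding of the definition of a regular open set, and the proof should be only a few lines.
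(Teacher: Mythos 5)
Your proof is correct and follows essentially the same route as the paper: reduce to showing $U$ meets $\interior(E^c)$, assume not, deduce $U\subseteq\overline E$ and hence $U\subseteq\interior\overline E=E$, contradicting $x\in\partial E$. The only cosmetic difference is that you invoke the identity $(\interior E^c)^c=\overline E$ directly, while the paper derives the needed inclusion $(\interior(E^c))^c\subseteq\overline E$ from $\overline E^c\subseteq\interior(E^c)$.
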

\begin{proof}
  Let $x\in\partial E$ and let $U$ be an open neighborhood of $x$. By the definition of the boundary,
  $U$ intersects $E$ and $E^c$. Since $E$ is open, $E=\interior{E}$. Thus it just remains to show that $U$
  intersects $\interior{(E^c)}$. 
  
  Since $\overline{E}^c$ is an open set contained in $E^c$, we have $\overline{E}^c\subseteq\interior{(E^c)}$.
  Thus $\interior{(E^c)}^c\subseteq\overline{E}$.
  Now, suppose that $U$ does not intersect $\interior{(E^c)}$. 
  Then $U\subseteq\interior{(E^c)}^c\subseteq \overline{E}$. Since $U$ is open, we have 
  $U\subseteq\interior{(\overline{E})}=E$. Hence $x\in E$. But this contradicts $x\in\partial E$, since
  $E$ is open and hence contains none of its boundary.
\end{proof}

For a set $E\subseteq\RR^d$ and a direction $u\in \S^{d-1}:=\{u \in \mathbb{R}^d: \|u\|=1\}$, let $E_u=\{t\in[0,\infty)\colon
  tu\in E\}$, which one should think of as the one-dimensional slice of $E$ in direction~$u$. Let
  $\sigma$ denote uniform measure on $\S^{d-1}$.
\begin{lemma}\label{lem:fubini}
  Suppose that $E\subseteq\RR^d$ has Lebesgue measure zero. Then for $\sigma$-a.e.\ $u\in\S^{d-1}$,
  the set $E_u$ has one-dimensional Lebesgue measure zero.
\end{lemma}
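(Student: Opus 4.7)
The plan is to reduce this to a standard Tonelli argument after a polar-coordinate change of variables. Write $\mathbb{R}^d\setminus\{0\}$ as $(0,\infty)\times\mathcal{S}^{d-1}$ via the diffeomorphism $x\mapsto(\|x\|,x/\|x\|)$. Under this identification the pushforward of $d$-dimensional Lebesgue measure factors, up to a positive constant $c_d$ depending only on the normalization convention for $\sigma$, as $c_d\, r^{d-1}\,dr\otimes d\sigma(u)$. Since $\{0\}$ is a Lebesgue null set and $\lambda(E)=0$, this gives
\begin{equation*}
0 \;=\; \int_{\mathbb{R}^d}\mathbf{1}_E(x)\,dx \;=\; c_d\int_{\mathcal{S}^{d-1}}\!\!\int_0^\infty \mathbf{1}_E(ru)\,r^{d-1}\,dr\,d\sigma(u) \;=\; c_d\int_{\mathcal{S}^{d-1}}\!\!\int_0^\infty \mathbf{1}_{E_u}(r)\,r^{d-1}\,dr\,d\sigma(u).
\end{equation*}

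By Tonelli's theorem applied to the nonnegative integrand, the inner integral $\int_0^\infty \mathbf{1}_{E_u}(r)\,r^{d-1}\,dr$ vanishes for $\sigma$-a.e.\ $u\in\mathcal{S}^{d-1}$. For each such $u$, because $r^{d-1}>0$ for every $r>0$, the integrand $\mathbf{1}_{E_u}(r)\,r^{d-1}$ is zero for one-dimensional Lebesgue almost every $r\in(0,\infty)$, and hence $\mathbf{1}_{E_u}=0$ almost everywhere on $(0,\infty)$. Adding back the single point $r=0$ costs nothing, so $E_u$ itself has one-dimensional Lebesgue measure zero for $\sigma$-a.e.\ $u$, which is the claim.

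There is essentially no obstacle: this is a textbook consequence of the polar-coordinate formula combined with Tonelli, and the only bookkeeping concerns the harmless multiplicative constant $c_d$ and the harmless null set $\{0\}$ removed by passing to $\mathbb{R}^d\setminus\{0\}$. The measurability of the section $E_u$ for $\sigma$-a.e.\ $u$ (needed to make sense of its Lebesgue measure) is part of the usual Tonelli statement once $E$ is Lebesgue measurable; if one prefers, one can start from a Borel null set $E'\supseteq E$ to avoid any measurability subtlety, since any subset of a null section is again null.
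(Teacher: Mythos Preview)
Your proof is correct and follows essentially the same approach as the paper: both arguments apply the polar-coordinate integration formula to $\mathbf{1}_E$ and then invoke Tonelli to conclude that the radial slice integral vanishes for $\sigma$-a.e.\ direction, which forces $E_u$ to be null since $r^{d-1}>0$. The only differences are cosmetic---you discuss the origin and measurability a bit more explicitly, while the paper just cites Folland's Theorem~2.49 for the change of variables.
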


\begin{proof}
  By \cite[Theorem~2.49]{Folland},
  \begin{align*}
    0=\int_{\RR^d}\1\{x\in E\}\,dx = C\int_{\S^{d-1}}\int_0^\infty \1\{r\in E_u\}r^{d-1}\,dr\,d\sigma(u),
  \end{align*}
  where $C$ is the volume of $\S^{d-1}$.
  This shows that the inner integrand is zero for $\sigma$-a.e.~$u$. As the inner integrand is zero if
  and only if $E_u$ has measure zero, this completes the proof.
\end{proof}

In the remainder of this section for the convenience we take $S(x,x)=\emptyset$ for all $x \in \mathbb{R}^d$. For instance, this convention allows us to write $x \in \mathbb{R}^d$ in place of $x \in \mathbb{R}^d\setminus \{y\}$ in the following lemma.
\begin{lemma}\label{lem:iso.measure.zero} Suppose that the forbidden regions $S(x,y)$ form 
  an $(S,u_0)$ regular isotropic family (see Definition~\ref{df:isotropic}).
  Then for any $w,y\in\RR^d$ with $w\neq y$, the set $\{x\in\RR^d \colon w\in\partial S(y, x)\}$
  has Lebesgue measure zero.
\end{lemma}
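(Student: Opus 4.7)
By translation invariance of the isotropic family, $S(y,x) = y + S(0, x-y)$, so $w \in \partial S(y,x)$ iff $w' := w - y \in \partial S(0, x-y)$. Setting $x' = x - y$ (a measure-preserving shift), the claim reduces to showing that $E := \{x \in \mathbb{R}^d \setminus\{0\} : w' \in \partial S(0,x)\}$ has Lebesgue measure zero for $w' \neq 0$. Using the isotropic parametrization $S(0,x) = x/2 + \|x\|\rho_{\hat x}(S)$ for $x \neq 0$, and noting $\rho_{\hat x}^{-1}(\hat x) = u_0$, the condition $w' \in \partial S(0,x)$ rearranges to $\rho_{\hat x}^{-1}(w')/\|x\| - u_0/2 \in \partial S$.

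Writing $x = rv$ in polar coordinates ($r > 0$, $v \in S^{d-1}$) and using $|E| = \int_{S^{d-1}} \int_0^\infty \mathbf{1}_E(rv)\, r^{d-1}\,dr\,d\sigma(v)$, the condition for $rv \in E$ becomes $\rho_v^{-1}(w')/r - u_0/2 \in \partial S$; under the change of variable $s = \|w'\|/r$ this reads $s\,T(v) \in u_0/2 + \partial S$, where $T(v) := \rho_v^{-1}(\hat w') \in S^{d-1}$. So the set of $r > 0$ giving $rv \in E$ has one-dimensional Lebesgue measure zero precisely when the slice $\{s > 0 : s T(v) \in u_0/2 + \partial S\}$ is 1D-null. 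Since $S$ is regular, $|\partial S| = 0$, so $|u_0/2 + \partial S| = 0$; by Lemma~\ref{lem:fubini}, there is a $\sigma$-conull set $G \subseteq S^{d-1}$ such that for $u \in G$ the slice $\{s > 0 : su \in u_0/2 + \partial S\}$ is 1D-null. It remains to show $T(v) \in G$ for $\sigma$-a.e.\ $v \in S^{d-1}$.

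With the choice of $\rho_v$ specified in Remark~\ref{rk:isotropic}, $T$ is real-analytic on $S^{d-1} \setminus \{-u_0\}$. A direct calculation at $v = u_0$ (where $\rho_{u_0}$ is the identity) yields $dT_{u_0}(\delta v) = (\hat w' \cdot \delta v)\,u_0 - (\hat w' \cdot u_0)\,\delta v$ for $\delta v \in T_{u_0}S^{d-1} = u_0^\perp$, an injection whenever $\hat w' \cdot u_0 \neq 0$; when $\hat w' \perp u_0$ the analogous computation at another suitable base point $v$ produces injectivity. By real-analyticity, the set where $dT$ fails to have maximal rank is either all of $S^{d-1}\setminus\{-u_0\}$ or a proper analytic subset of $\sigma$-measure zero, and the above computation rules out the former. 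Off this $\sigma$-null set $T$ is locally a diffeomorphism, hence pulls $\sigma$-null sets back to $\sigma$-null sets; in particular $\sigma(T^{-1}(S^{d-1}\setminus G)) = 0$. Combining, $|E| = 0$ by the polar representation above. The main obstacle is this last step: verifying non-degeneracy of $dT$ at some point for every configuration of $(u_0, \hat w')$, which amounts to a finite local calculation but is the only non-trivial piece of the argument.
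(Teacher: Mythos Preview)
Your reduction by translation invariance and the rewriting of the condition as $sT(v)\in u_0/2+\partial S$ with $T(v)=\rho_v^{-1}(\hat w')$ are correct, and the strategy of pulling back the $\sigma$-null ``bad'' set $S^{d-1}\setminus G$ through $T$ is sound provided $T$ is a local diffeomorphism off a $\sigma$-null set. However, the step you flag as the only non-trivial one is genuinely unfinished: when $\hat w'\perp u_0$, your formula $dT_{u_0}(\delta v)=(\hat w'\cdot\delta v)u_0-(\hat w'\cdot u_0)\delta v$ has rank one, and you only assert that ``another suitable base point'' works. It does---at $v_0=\hat w'$ one computes $dT_{\hat w'}(u_0)=\hat w'$ and $dT_{\hat w'}(e)=-e$ for $e\in u_0^\perp\cap(\hat w')^\perp$, which is full rank---but this requires differentiating $v\mapsto\rho_v$ at a point where the rotation plane itself varies, and you have not carried this out. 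So the proposal is essentially correct but incomplete as written.

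The paper's route is quite different and avoids this case analysis entirely. Rather than studying the map $T$, it exploits the isotropy hypothesis a second time: since $S$ is invariant under the stabilizer $\mathrm{SO}(u_0^\perp)\cong\mathrm{SO}(d-1)$, one may freely insert an integration over $\tau\in\mathrm{SO}(u_0^\perp)$ inside the integral, replacing $\rho_u$ by $\tau\rho_u$. With $u$ uniform on $S^{d-1}$ and $\tau$ Haar on $\mathrm{SO}(u_0^\perp)$, the product $\tau\rho_u$ is Haar-distributed on $\mathrm{SO}(d)$, so $(\tau\rho_u)^{-1}(w)$ is uniform on $\|w\|\,S^{d-1}$. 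This collapses the integral directly to one over $S^{d-1}$ with no need to analyze any Jacobian, and the slicing Lemma~\ref{lem:fubini} finishes. Your approach trades this group-theoretic trick for differential geometry; it works, but at the cost of a base-point computation that must be done separately in the degenerate case.
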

\begin{proof}\newcommand{\SO}{\mathrm{SO}}%
First note that by translation invariance of the forbidden regions,
  \begin{align*}
    \{x\in\RR^d\colon w\in\partial S(y, x)\} &= \{x\in\RR^d\colon w-y\in\partial S(0, x-y)\}\\
      &= \{x\in\RR^d\colon w-y\in\partial S(0, x)\}+y.
  \end{align*}
  Hence it suffices to prove that $\{x\in\RR^d\colon w\in\partial S(0, x)\}$ has measure zero
  for all $w\in\RR^d\setminus\{0\}$.
  
  The rest of the argument is easier to follow in $\RR^2$, and we present it there first. Let us identify
  $\RR^2$ with $\mathbb{C}$ for convenience. Observe that our isotropic assumption
  implies that $S(0,re^{i\theta})=re^{i\theta}S(0,1)$. Thus, with $T=S(0,1)$,  for any $w\in\mathbb{C}\setminus\{0\}$,
  \begin{align*}
    \int_{\RR^2} \1\{w\in\partial S(0,x)\}\,dx &=
      \int_0^{2\pi}\int_0^{\infty} \1\{r^{-1}e^{-i\theta}\in w^{-1}\partial T)\}\,r\,dr\,d\theta\\
      &=\int_0^{2\pi}\int_0^{\infty} \1\{te^{-i\theta}\in w^{-1}\partial T)\}\,t^{-3}\,dt\,d\theta,
  \end{align*}
  making the substitution $t=r^{-1}$. For a given $\theta$, the inner integrand is zero except when
  $t\in(w^{-1}\partial T)_{e^{-i\theta}}$, in the notation of Lemma~\ref{lem:fubini}.
  By our assumption in Definition~\ref{df:isotropic} that $S$ has negligible boundary,
  $w^{-1}\partial T$ has measure zero. Thus the inner integral is zero for a.e.~$\theta$ by
  Lemma~\ref{lem:fubini}, making the entire integral equal to zero.
  
  In higher dimensions, the proof is more complicated because rotation is more complicated, but the idea
  is the same.
  First, we record some facts about rotations of $\RR^d$ around the origin, 
  which can be represented as elements of $\SO(d)$, the special
  orthogonal group of order~$d$. The group $\SO(d)$ is isomorphic to
  $\S^{d-1}\times \SO(d-1)$. The decomposition works by specifying a vector $u\in\S^{d-1}$
  that a chosen vector $u_0$ is mapped to (note that we take this chosen vector to be the same as the
  axis of symmetry for the isotropic family), and then specifying how the orthogonal complement of the span of $u$ is
  rotated. As a corollary to this decomposition, if $u$ is chosen uniformly over $\S^{d-1}$,
  and the rotation of the orthogonal complement of $u$ is chosen from Haar measure on $\SO(d-1)$,
  then the result is distributed as Haar measure on $\SO(d)$. We let $\rho_u\in\SO(d)$ denote the rotation of 
  $\RR^d$ around the origin taking $u_0$ to $u$ by rotating the plane containing $u_0$ and $u$ 
  and fixing its orthogonal
  complement (if $u=u_0$, take $\rho_u$ to be the identity). We use the notation $\SO(u^\perp)$ to denote
  the subgroup of $\SO(d)$ fixing $u$, which as discussed above is isomorphic to $\SO(d-1)$.

  Let $\overline{x}\in \S^{d-1}$
  denote $x/\norm{x}$ for $x\neq 0$. Let $T=S+u_0/2=S(0,u_0)$.
  It follows from our isotropic assumption that
  \begin{align*}
    \partial S(0,x) = \norm{x} \rho_{\overline{x}}(\partial T).
  \end{align*}
  Thus, with $\sigma_d$ denoting Haar measure on $\S^d$, the measure of $\{x\in\RR^d\colon w\in\partial S(0, x)\}$ can be expressed as
  \begin{align*}
    \int_{\RR^d} \1\{w\in\norm{x} \rho_{\overline{x}}(\partial T)\}\,dx &=
      C\int_0^{\infty} \int_{\S^d} \1\{w\in r \rho_u(\partial T)\}r^{d-1}\,\,d\sigma_d(u)\,dr
  \end{align*}
  with the (irrelevant) constant determined by the volume of $\S^{d-1}$.
   Letting $\mu_u$ denote Haar measure on $\SO(u^\perp)$ normalized
  to have measure one, we can rewrite
  the integral as
  \begin{align*}
    C\int_0^\infty \int_{\S^{d-1}}&
      \int_{\SO(\mu^\perp)}\1\{w\in r \tau\rho_u(\partial T)\}r^{d-1}\,d\mu_u(\tau)\,d\sigma_{d-1}(u)\,dr\\
      &=C\int_0^\infty \int_{\S^{d-1}}
      \int_{\SO(u^\perp)}\1\{r^{-1}(\tau\rho_u)^{-1}(w)\in \partial T\}r^{d-1}\,d\mu_u(\tau)\,d\sigma_{d-1}(u)\,dr.
  \end{align*}
  As we mentioned before, $\tau\rho_u$ with $\tau$ distributed as $\mu_u$ and $u$ distributed
  as $\sigma_{d-1}$ is Haar-distributed over $\SO(d)$. By the invariance of Haar measure under 
  multiplication, 
  the distribution of $(\tau\rho_u)^{-1}(w)$ under this measure
  is uniform over $\norm{w}\S^{d-1}$. Hence we can rewrite the integral as
  \begin{multline*}
    \int_0^\infty \int_{\S^{d-1}}
      \1\{r^{-1}\norm{w}u\in \partial T\}r^{d-1}\,d\sigma_{d-1}(u)\,dr
        \\= \int_{\S^{d-1}}\int_0^\infty\1\{tu\in \norm{w}^{-1}\partial T\}t^{-(d+1)}\,dt\,
           d\sigma_{d-1}(u),
  \end{multline*}
  substituting $t=1/r$.
  The inner integral is supported on the ray $(\norm{w}^{-1}\partial T)_u$.
  Since the set $\norm{w}^{-1}\partial T$ has measure zero, the inner integral is thus zero
  for $\sigma$-a.e.~$u$ by Lemma~\ref{lem:fubini}.
\end{proof}

\newenvironment{step}[1]{\ignorespaces\par\medskip\par\goodbreak\noindent \textbf{Step #1}.\begingroup\itshape}{\endgroup\par\noindent\ignorespacesafterend}

\begin{lemma}\label{lem:nondet}
Assume the forbidden regions $S(x,y)$ are a $(S,u_0)$ regular isotropic family satisfying Assumption~\ref{assumption:specialy}, and that $S(x,y)=\interior\overline{S(x,y)}$ for all $\{x,y\}\subseteq\RR^d$.
Let $\{w,z\}\subseteq B(0,1)$ be distinct points.
Let $\mu$ be a homogeneous Poisson process on $\RR^d\setminus B(0,1+2\mathcal{D})$,
  and let $\mu'=\{w,z\}\cup\mu$.
  Then a.s.-$\mu$, 
  there exist open balls $A,A'\subseteq\RR^d$ such that
  \begin{align}
    D_xL(\mu') \neq D_{x'}L(\mu') \label{eq:nondet}
  \end{align}
  for all $x\in A$, $x'\in A'${, and furthermore the center and radii of $A$ and $A'$ are measurable random variables}.
\end{lemma}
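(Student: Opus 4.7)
Let $y = y(w,z) \in \partial S(w,z)$ be the point from Assumption~\ref{assumption:specialy}, which is a measurable function of $(w,z)$. The plan is to prove that $\phi(x) := D_x L(\mu')$ has a jump discontinuity of size exactly $-\psi(w,z) \neq 0$ as $x$ crosses $\partial S(w,z)$ near $y$, while being continuous on each side of this boundary. Writing
\begin{align*}
\phi(x) = \sum_{a \in \mu'} \mathbf{1}\{S(x,a) \cap \mu' = \emptyset\}\psi(x,a) - \sum_{\{a,b\} \in \G(\mu'),\, x \in S(a,b)} \psi(a,b),
\end{align*}
and separating the $\{w,z\}$ contribution from the second sum (noting that $\{w,z\} \in \G(\mu')$, since $S(w,z) \subseteq B(0,1+2\mathcal{D})$ is disjoint from $\mu$), we decompose $\phi(x) = -\mathbf{1}\{x \in S(w,z)\}\psi(w,z) + R(x)$, with $R$ collecting every remaining term.

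The first substantive step is to show, almost surely over $\mu$, that $y$ lies on no boundary affecting $R$. The events we must rule out are: (i) $y \in \partial S(a,b)$ for some $\{a,b\} \subseteq \mu'$ with $\{a,b\} \neq \{w,z\}$, and (ii) $p \in \partial S(y,a)$ for some $(p,a) \in (\mu')^2$ with $p \neq a$. Assumption~\ref{assumption:specialy} directly rules out (ii) in the two cases $(p,a) \in \{(w,z),(z,w)\}$. Every other case of (i) and (ii) follows from Lemma~\ref{lem:iso.measure.zero} together with the Mecke formula: for fixed $w' \in \{w,z\}$ or fixed $p' \in \{w,z\}$ the sections $\{a : y \in \partial S(w',a)\}$ and $\{a : p' \in \partial S(y,a)\}$ are Lebesgue null, and iterating Mecke handles the pairs $\{a,b\} \subseteq \mu$. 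The remaining hybrid cases (where $p \in \mu$ and $a \in \{w,z\}$) use that $\partial S(w,y)$ and $\partial S(z,y)$ are themselves Lebesgue null by the regularity hypothesis.

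On this full-probability event we invoke Lemma~\ref{lem:stabilization} and Proposition~\ref{prop:Rs.exponential.bound} to ensure that the stabilization radius $R_S(B(y,\delta);\mu';\mathbb{R}^d)$ is finite for some (random) $\delta>0$. Consequently only finitely many edges $\{a,b\} \in \G(\mu')$ have $\overline{S(a,b)} \cap B(y,\delta) \neq \emptyset$, and for each $x \in B(y,\delta)$ the neighbour candidates $a \in \mu'$ relevant to the first sum in $R$ form a finite set. Among the resulting finitely many critical surfaces only $\partial S(w,z)$ passes through $y$; each other is at strictly positive distance from $y$. Choosing $\epsilon \in (0,\delta]$ smaller than the minimum of those positive distances makes every indicator appearing in $R(x)$ constant as $x$ ranges over $B(y,\epsilon)$, and continuity of $\psi$ then makes $R$ continuous on $B(y,\epsilon)$.

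Finally, by Lemma~\ref{lem:regular.points} applied to the regular-open set $S(w,z)$, we may pick $x_0 \in B(y,\epsilon) \cap S(w,z)$ and $x_0' \in B(y,\epsilon) \setminus \overline{S(w,z)}$ arbitrarily close to $y$. By continuity of $R$ they may be chosen so that $|R(x_0) - R(x_0')| < |\psi(w,z)|/2$, giving $|\phi(x_0) - \phi(x_0')| \geq |\psi(w,z)|/2 > 0$. Open balls $A \ni x_0$ inside $B(y,\epsilon) \cap S(w,z)$ and $A' \ni x_0'$ inside $B(y,\epsilon) \setminus \overline{S(w,z)}$, of radius small enough that $\phi(A)$ and $\phi(A')$ remain in disjoint open intervals centred at $\phi(x_0)$ and $\phi(x_0')$, then supply the conclusion, with measurability of the centres and radii arranged by selecting them from a countable dense family. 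The main technical obstacle is the local-finiteness step above: without Assumption~\ref{assumption:specialy} Lemma~\ref{lem:iso.measure.zero} alone cannot exclude the critical pairs $(w,z)$ and $(z,w)$, which is precisely why that assumption is imposed.
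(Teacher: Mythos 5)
Your proposal is correct and follows essentially the same route as the paper: the boundary-avoidance claims you establish via Assumption~\ref{assumption:specialy}, Lemma~\ref{lem:iso.measure.zero}, and Mecke's formula are the paper's Steps~1--2, your local-finiteness plus continuity argument is the paper's Step~3, and your final construction using Lemma~\ref{lem:regular.points} is the paper's Step~4. Your explicit decomposition $\phi(x)=-\mathbf{1}\{x\in S(w,z)\}\psi(w,z)+R(x)$ makes the jump-of-size-$\psi(w,z)$ idea slightly more transparent than the paper's phrasing in terms of $E^{\pm}$, but this is a presentational difference; the one place you are a bit loose is in directly invoking Proposition~\ref{prop:Rs.exponential.bound} for $\mu'$ on $\RR^d$ (the paper passes through $\Ppp{\infty}$ and a conditioning argument, together with Lemma~\ref{lem:monotonicity}, to get that $\mathcal{R}_S(B(y,1);\mu';\RR^d)$ is a.s.\ bounded), though the conclusion you need is correct.
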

\begin{proof} 
  Let $y\in\partial S(w,z)$ be a point satisfying $z\notin\partial S(w,y)$
  and $w\notin\partial S(z,y)$, whose existence is promised by Assumption~\ref{assumption:specialy}.
  The main idea of the proof is that adding to $\mu'$ any point close to $y$ has the same effect
  on $\G(\mu')$ except for possibly causing the deletion of the edge $wz$. Note that $wz$ is always
  present in $\G(\mu')$, as $S(w,z)$ has at most diameter $2\mathcal{D}$ and hence is
  contained in $B(0,1+2\mathcal{D})$, while $\mu'$ has no points in $B(0,1+2\mathcal{D})$ besides
  $w$ and $z$.
  
    \begin{step}{1} A.s.-$\mu$, we have $b\notin\partial S(y,a)$ 
    for all $\{a,b\}\subseteq \mu'$ with $a\neq b$.
    \end{step}    
    By Assumption \ref{assumption:specialy}, $w\notin \partial S(y,z)$ and $z\notin\partial S(y,w)$.
    Since $\partial S(y,z)$ and $\partial S(y,w)$ have measure zero, almost surely
    no points of $\mu$ fall in either of these sets. 
    Now we are left to show that 
    \begin{align} \label{eq:muonly}
      b\notin \partial S(y,a)\text{ a.s.,\quad for $a\in\mu,\,b\in\mu',\,a\neq b$.}
    \end{align}
    For a point process configuration $\chi$, let
    \begin{align*}
      f(\chi, a)=
      \# \Bigl( \bigl((\{w,z\}\cup\chi)\setminus \{a\}\bigr) \cap \partial S(y,a)\Bigr).
    \end{align*}
    Our goal is then to show that $\sum_{a\in\mu}f(\mu,a)=0$ a.s.
    By Mecke's formula,
    \begin{align*}
      \E \sum_{a\in\mu} f(\mu,a) = \E \int_{\RR^d \setminus B(0,1+2\mathcal{D})}f(\mu\cup\{a\}, a)\,da =
      \int_{\RR^d \setminus B(0,1+2\mathcal{D})} \E\Bigl[\#\bigl((\{w,z\}\cup\mu)\cap \partial S(y,a)\bigr)\Bigr]da,
    \end{align*}
    with the transposition of the integral and expectation justified by non-negativity of the integrand.
    For any $a\in\RR^d$, the set $\partial S(y,a)$ has measure zero by our
    assumption that $S$ has negligible boundary, and hence
    no points of $\mu$ are in $\partial S(y,a)$ a.s. Thus we can simplify the above expression to
    \begin{align} \label{eq:more.general.than.iso}
      \E \sum_{a\in\mu} f(\mu,a)     
      &=\int_{\RR^d} \#\bigl(\{w,z\}\cap \partial S(y,a)\bigr)\,da\\ \nonumber
      &=\int_{\RR^d} \bigl( \1\{w\in \partial S(y,a)\} + \1\{z\in\partial S(y,a)\}\bigr)\,da,
    \end{align}
    with the expectation removed because there is no longer any randomness in the integrand.
    Thus it follows from Lemma~\ref{lem:iso.measure.zero} that the
    integrand is zero except on a set of measure zero, proving that
    $\E \sum_{a\in\mu} f(\mu,a)=0$. This proves \eqref{eq:muonly}, completing
    the proof of this step.
    
    \begin{step}{2}
      A.s.-$\mu$, we have $y\notin \partial S(a,b)$ for $\{a,b\}\subset\mu'$, $\{a,b\}\neq\{w,z\}$.
    \end{step}
    This step follows by essentially the same proof as for Step 1.

    In the next step, we say that $E^+_x(\mu')$ and $E^+_y(\mu')$ are equivalent if the set of edges $E^+_x(\mu')$
    is equal to the set $E^+_y(\mu')$  when all edges of the form $\{y,a\}$ in the latter are replaced by $\{x,a\}$.
    Note that we do not
    need a definition like this for $E^-_x(\mu')$ and $E^-_y(\mu')$, since edges with vertices $x$ or $y$ do not appear
    in these collections.

     To prepare for the next step, recall that the Hausdorff metric between two subsets $A$ and $B$ of $\mathbb{R}^d$ is defined as
     	\beas
     	d_H(A,B)=\inf \{\epsilon>0: A \subseteq B_\epsilon, B \subseteq A_\epsilon\} \qmq{where}  F_\epsilon = \bigcup_{x \in F} \{y \in \mathbb{R}^d: \|y-x\| \le \epsilon\}.
     	\enas
     It is clear that when the forbidden regions form a regular isotropic family,
      the map $(x,y)\mapsto S(x,y)$ is Hausdorff continuous in $(x,y)  \in \mathbb{R}^d \times \mathbb{R}^d$.   
    \begin{step}{3}
  For some random radius $\rho>0$, it holds for all $x\in B(y,\rho)$ 
  that $E^+_x(\mu')$ is equivalent to $E^+_y(\mu')$, and that
  $E^-_x(\mu')$ is equal to either $E^-_y(\mu')$ or $E^-_y(\mu')\cup\{\{w,z\}\}$.
    \end{step}
    Let $R=\mathcal{R}_S(B(y,1); \mu'; \RR^d)$. The set $\mathcal{R}_S(B(y,1); \Ppp{\infty}; \RR^d)$
    is bounded a.s.-$\Ppp{\infty}$ by Proposition~\ref{prop:Rs.exponential.bound}. Since $\mu$ is distributed
    as $\Ppp{\infty}$ conditional on an event of positive probability,
    $\mathcal{R}_S(B(y,1); \mu; \RR^d)$ is bounded a.s.-$\mu$. As $\mu\subseteq \mu'$,
     Lemma~\ref{lem:monotonicity} shows that the set $R$ is bounded a.s.-$\mu$.      Recall by using \eqref{def:GSedge.set} that the addition of any point $x\in B(y,1)$ changes the graph $\G(\mu')$ only by the addition
    of edges $xa$ and deletion of edges $ab$ for $a,b\in R$.     

    Step~1 shows that for each $a\in\mu'\cap R$, the set $\partial S(y,a)$ does not contain any points
    of $(\mu'\setminus \{a\})\cap R$.   
    Since $(\mu'\setminus \{a\})\cap R$ is almost surely finite,
    the set $\partial S(y,a)$ has positive
    distance from $(\mu'\setminus\{a\})\cap R$, as both sets are compact.
  By the Hausdorff continuity of the map $S$, there is a positive distance $\rho^+_a$ such that
    for all $x\in B(y,\rho_a^+)$, the set $\partial S(x,a)$ avoids $(\mu'\setminus\{a\})\cap R$.
    Set $\rho^+$ to be the minimum of $\rho_a^+$ over the almost surely finitely many $a\in\mu'\cap R$.
    Then for all $x\in B(y,\rho^+)$, 
    the collections $E^+_x(\mu')$ and $E^+_y(\mu')$ are equivalent.  Standard continuity considerations yield that the $\rho _{a}$, and therefore $\rho $, can be built to be measurable random variables.
    
    Step~2 implies that for all $\{a,b\}\subseteq\mu'\cap R$ except for $\{w,z\}$, the set
    $\partial S(a,b)$ has a positive distance $\rho^-_{ab}$ from $y$. Set $\rho^-$ as the minimum of
    $\rho^-_{ab}$ over this almost surely finite collection of $\{a,b\}$. Then for $x\in B^o(y,\rho^-)$, as $y \in \partial S(w,z)$, and $S(w,z)$ is open,
    it holds that $E^-_x(\mu')$ is equal to either $E^-_y(\mu')$ or $E^-_y(\mu')\cup\{\{w,z\}\}$.
    Taking $\rho$ less than $\rho^+$ and $\rho^-$ completes the step.
    
    \begin{step}{4}
      Construction of $A,A'$ satisfying \eqref{eq:nondet}.
    \end{step}
    Let $A_{0}= B^o(y,\rho')\cap \interior{S(w,z)}$ and $A'_{0}=B^o(y,\rho')\cap\interior{(S(w,z)^c)}$ for $\rho' \in (0,\rho)$
    to be specified later.  By Lemma~\ref{lem:regular.points}, 
    both sets $A_{0}$ and $A'_{0}$ are open and nonempty, thus we define $A$ and $A'$ to be the balls with maximal radii centered respectively at arbitrary points $y_0 \in A_{0}$ and $y_0' \in A_{0}'$,
    chosen in some measurable way.
    By the previous step, $E^+_x(\mu')$ and $E^+_y(\mu')$
    are equivalent for $x\in A\cup A'$.
    For $x'\in A'$, we have $E^-_{x'}(\mu')=E^-_y(\mu')$, and for $x\in A$, we have 
    $E^-_x(\mu')=E^-_y(\mu')\cup \{\{w,z\}\}$. Thus for $x\in A$ and $x'\in A'$,
    \begin{align*}
      D_{x'}L(\mu') - D_{x}L(\mu') = \psi(w,z) +\sum_{a\colon \{a,x\}\in E^+_x(\mu')} \bigl(\psi(a,x')-\psi(a,x)\bigr).
    \end{align*}
    By  the continuity of $\psi$, and that $E_x^+(\mu')$ is finite, the sum can be made arbitrarily small over all $x\in A$, $x'\in A'$ by choosing $\rho'$ small enough, 
    a choice which can be made in a measurable way with respect to $\mu $.
    If we choose $\rho'$ to make the sum smaller than $\psi(w,z)$, non-zero by hypothesis as $w \neq z$, then \eqref{eq:nondet} holds
    for $x\in A$, $x'\in A'$.
\end{proof}

\begin{theorem} \label{thm:nondgeneracy}
 Assume that the forbidden regions $S(x,y)$ are a $(S,u_0)$ regular isotropic family satisfying Assumption~\ref{assumption:specialy}.
  Then for all $x\in \RR^d$, the random variable $D_x L(\Ppp{\infty})$ is nondeterministic.
\end{theorem}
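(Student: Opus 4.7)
The plan is to fix $x\in\RR^d$ and exhibit two events $\mathcal{F}_1,\mathcal{F}_2$ of positive probability such that $\E[D_xL(\Ppp{\infty})\mid\mathcal{F}_1]\neq\E[D_xL(\Ppp{\infty})\mid\mathcal{F}_2]$, which rules out $D_xL(\Ppp{\infty})$ being a.s.\ constant. Both events will correspond to $\Ppp{\infty}\cap B(x,1+2\mathcal{D})$ consisting of exactly two points $\{W,Z\}$ in a small neighborhood of a deterministic pair, the two events differing only in whether $x$ lies in $\interior S(W,Z)$ or in $\interior(\overline{S(W,Z)}^c)$. In the first case the insertion of $x$ into $\Ppp{\infty}$ breaks the edge $\{W,Z\}$ of $\G$, contributing $-\psi(W,Z)$ to $D_xL$; in the second it does not. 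This is exactly the mechanism underlying Lemma~\ref{lem:nondet}, so that $x$ will play the role of a point of $A$, resp.\ $A'$, produced there.

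To construct the deterministic pairs, I would start from any reference triple $(w_0,z_0,y_0)$ satisfying Assumption~\ref{assumption:specialy}, with $\|w_0-z_0\|$ small enough that the pairs below lie in $B(x,1)$; note that $\psi(w_0,z_0)\neq0$ is then a fixed nonzero constant. Lemma~\ref{lem:regular.points} applied to $S(w_0,z_0)=\interior\overline{S(w_0,z_0)}$ produces points $y^{\mathrm{in}}\in\interior S(w_0,z_0)$ and $y^{\mathrm{out}}\in\interior(\overline{S(w_0,z_0)}^c)$ arbitrarily close to $y_0$. Translating by $v_1=x-y^{\mathrm{in}}$ and $v_2=x-y^{\mathrm{out}}$ produces pairs $(w_i,z_i)=(w_0+v_i,z_0+v_i)$ with $y_i=y_0+v_i$ arbitrarily close to $x$, $x\in\interior S(w_1,z_1)$ and $x\in\interior(\overline{S(w_2,z_2)}^c)$; translation invariance of the isotropic family preserves Assumption~\ref{assumption:specialy}'s conditions on $(w_i,z_i,y_i)$. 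For small open neighborhoods $U_i\subseteq B(x,1)^2$ of $(w_i,z_i)$, define the positive-probability event $\mathcal{E}_i=\{\Ppp{\infty}\cap B(x,1+2\mathcal{D})=\{W,Z\},\ (W,Z)\in U_i\}$, and set $\mu=\Ppp{\infty}\setminus B(x,1+2\mathcal{D})$. The translated form of Lemma~\ref{lem:nondet} centered at $x$ (valid by translation invariance of the family and of the Poisson process) yields, a.s.\ in $\mu$ on $\mathcal{E}_i$, open balls $A(\mu,W,Z),A'(\mu,W,Z)$ of random positive radius $\rho$ centered near a chosen $y(W,Z)\in\partial S(W,Z)$; Hausdorff continuity of the boundary map, combined with the measurable selection in Assumption~\ref{assumption:specialy}, allows $y(W,Z)$ to be taken close to $y_i$ throughout $U_i$.

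By choosing $y^{\mathrm{in}},y^{\mathrm{out}}$ close enough to $y_0$, one makes $\|y_i-x\|$ arbitrarily small; since $\rho>0$ a.s., the event $\{\rho>2\|y(W,Z)-x\|\}$ has probability arbitrarily close to $1$. Define $\mathcal{F}_i$ as the intersection of $\mathcal{E}_i$ with this event and with the condition $x\in\interior S(W,Z)$ for $i=1$, resp.\ $x\in\interior(\overline{S(W,Z)}^c)$ for $i=2$ (which holds throughout $U_i$ by openness when $U_i$ is small). Then $\mathcal{F}_1,\mathcal{F}_2$ have positive probability, and $x\in A(\mu,W,Z)$ on $\mathcal{F}_1$, $x\in A'(\mu,W,Z)$ on $\mathcal{F}_2$. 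Consequently $D_xL(\Ppp{\infty})$ on $\mathcal{F}_1$ contains the term $-\psi(W,Z)\approx-\psi(w_0,z_0)$ that is absent on $\mathcal{F}_2$; by the calculation of Step~4 in the proof of Lemma~\ref{lem:nondet} and the continuity of $\psi$, the remaining contributions (the new edges $\{x,a\}$) differ between the two events by an amount that, upon shrinking $U_i$ and taking $y^{\mathrm{in}},y^{\mathrm{out}}$ close to $y_0$, is much smaller than $|\psi(w_0,z_0)|$. Hence $\E[D_xL\mid\mathcal{F}_1]$ and $\E[D_xL\mid\mathcal{F}_2]$ differ by approximately $|\psi(w_0,z_0)|>0$, proving nondeterminism. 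The main obstacle is the localization of $y(W,Z)$ close to $y_i$ for typical $(W,Z)\in U_i$, which is handled by the measurability clause of Assumption~\ref{assumption:specialy} combined with Hausdorff continuity of the forbidden-region boundaries.
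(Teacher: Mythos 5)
Your proposal takes a genuinely different route from the paper, trying to show nondeterminism \emph{directly at the fixed point} $x$ by engineering two positive-probability events on which $x$ lands in $A$ versus $A'$. The paper avoids this difficulty entirely: having shown (via Lemma~\ref{lem:nondet}) that with positive probability there exist \emph{some} open sets $A,A'$ with $D_uL(\Ppp\infty)\neq D_{u'}L(\Ppp\infty)$ for $u\in A$, $u'\in A'$, it argues by contradiction. If $D_xL(\Ppp\infty)$ were a.s.\ constant for one $x$, then by translation invariance of $\Ppp\infty$ and of the isotropic forbidden regions the same constant works for every $x$, hence a.s.\ simultaneously for all $x$ in a countable dense set; but any dense set meets both $A$ and $A'$, a contradiction. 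This sidesteps any need to control \emph{where} the sets $A,A'$ appear.

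The gap in your direct construction lies in the step you yourself flag as the main obstacle and then claim is ``handled.'' Assumption~\ref{assumption:specialy} supplies only a \emph{measurable} choice $(w,z)\mapsto y(w,z)\in\partial S(w,z)$, with no continuity or local-boundedness property; Hausdorff continuity of $(w,z)\mapsto\partial S(w,z)$ tells you the boundary sets move continuously, but says nothing about whether the selected point on that boundary does. The valid set $V(w,z)=\{y\in\partial S(w,z): z\notin\partial S(w,y),\ w\notin\partial S(z,y)\}$ could, as far as the assumptions are concerned, shift discontinuously as $(W,Z)$ ranges over $U_i$, so $y(W,Z)$ need not be near $y_i$, hence $x$ need not be near the center of the random balls $A(\mu,W,Z),A'(\mu,W,Z)$. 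To patch this you would have to construct a \emph{new} selection (e.g.\ a nearest-point map onto $\partial S(W,Z)$ from $y_i$) and verify both that it remains in $V(W,Z)$ for $(W,Z)$ near $(w_i,z_i)$ and that it is still measurable, and then rerun Steps~1--4 of Lemma~\ref{lem:nondet} with that selection. In addition, $\rho'$ in Step~4 is a random quantity whose construction involves minima of distances to an a.s.\ finite but random configuration; to get $\P(\rho'>2\|y(W,Z)-x\|)$ close to $1$ \emph{uniformly over $(W,Z)\in U_i$} you would need some form of lower semicontinuity of $\rho'$ in $(W,Z)$, which is plausible but is another nontrivial verification you do not supply. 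Both of these are substantive pieces of work that your proposal asserts rather than proves, whereas the paper's translation-invariance argument makes them unnecessary.
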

\begin{proof}
  As $\interior{S(x,y)}\subseteq \interior{\overline{S(x,y)}}\subseteq \overline{S(x,y)}$,
  the sets $S(x,y)$ and $\interior{\overline{S(x,y)}}$ differ only on $\partial S(x,y)$, a set of
  measure zero. For each of the almost surely countably many pairs $\{a,b\}\subset\Ppp{\infty}$, there are almost surely no points of
  $\Ppp{\infty}$ on $\partial S(a,b)$ besides $a$ and $b$. Thus $\G(\Ppp{\infty})$ is almost surely
  unaffected by replacing each forbidden region $S(x,y)$ by $\interior \overline{S(x,y)}$.
  If $B=\interior\overline{A}$, then $B \subseteq \overline{A}$ hence $\overline{B} \subseteq \overline{A}$, and taking interiors and using that $B$ is open yields $B \subseteq \interior{\overline{B}} \subseteq \interior{\overline{A}}=B$, and thus $B=\interior\overline{B}$. Hence we can
  assume that $S(x,y)=\interior \overline{S(x,y)}$ for all $x,y$.
  
  Let $w$ and $z$ be chosen uniformly and independently from $B(0,1)$, and let
  $\mu$ be a homogeneous Poisson process with intensity~1 on $\RR^d\setminus B(0,1+2\mathcal{D})$. 
  With positive probability, $\Ppp{\infty}$ has exactly two points in $B(0,1+2\mathcal{D})$, both of which
  are contained in $B(0,1)$. Conditional on this event, $\Ppp{\infty}$ is distributed
  as $\mu':=\{w,z\}\cup\mu$. By Lemma~\ref{lem:nondet}, a.s.-$\mu$ 
  there exist open sets $A,A'\subseteq\RR^d$
  such that $D_x L(\mu')\neq D_{x'}L(\mu')$ for all $x\in A$ and $x'\in A'$. Thus, with positive probability,
  there exist open sets $A,A'\subseteq\RR^d$
  such that $D_x L(\Ppp{\infty})\neq D_{x'}L(\Ppp{\infty})$ for all $x\in A$ and $x'\in A'$.
  
  Suppose that $D_x L(\Ppp{\infty})=c$ a.s.\ for some $x\in\RR^d$ and some constant~$c$. 
  By the translation invariance of $\Ppp{\infty}$, this holds for all $x\in \RR^d$. Hence it holds
  almost surely that $D_x L(\Ppp{\infty})=c$ for all $x$ in a countable dense set of $\RR^d$.
  But this contradicts the conclusion of the previous paragraph.
\end{proof}

  We now use Theorem~\ref{thm:nondgeneracy} to show that if $x$ and $y$ are far enough apart, then
  with positive probability adding $x$ or $y$ to the process produces different effects on $L$.
  \begin{lemma}\label{lem:different.points.can.differ}
    Assume the conditions of Theorem~\ref{prop:variance}.
    There exist constants $a>b, r_0 \in (0,\infty)$ and $p_0 \in (0,1]$ such that for all $r>r_0$
    the following
    statement holds: for all $x,y\in\RR^d$, if the $r$-balls around $x$ and $y$ are disjoint and $t>t_1(x,y,r)=\max\{t_0(x,r),t_0(y,r),t_2(r)\}$ where $t_2$ is a function depending only on $r$, then
    \begin{align*}
      \P\bigl(\text{$D_x L(\mu) > a$ and $D_yL(\mu)<b$} \bigr) \geq p_0
    \end{align*}
    for $\mu=\Ppp{t}$ or $\mu=\bpp{t}$.
  \end{lemma}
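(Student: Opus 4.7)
The plan is to combine the nondeterminism of $D_xL(\Ppp{\infty})$ from Theorem~\ref{thm:nondgeneracy}, the stabilization bounds of Proposition~\ref{prop:Epm} (which allow us to replace $D_xL(\Ppp{t})$ and $D_xL(\bpp{t})$ by $D_xL(\Ppp{\infty}\cap B(x,r))$), and the Poisson property on disjoint sets to obtain independence between the events at $x$ and at $y$.

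By Theorem~\ref{thm:nondgeneracy}, $D_0L(\Ppp{\infty})$ is nondeterministic, and by the translation invariance of $\Ppp{\infty}$ this distribution is the same at every $x\in\RR^d$. Hence there exist $a>b$ and $q\in(0,1)$ such that for all $x\in\RR^d$,
\begin{align*}
\P(D_xL(\Ppp{\infty})>a)\geq q \qmq{and} \P(D_xL(\Ppp{\infty})<b)\geq q.
\end{align*}
Applying Proposition~\ref{prop:Epm} both with finite $t$ and with $t=\infty$, and observing that $E_x^\pm$ determines $D_xL$, I can enlarge $r_0$ so that for all $r>r_0$, $x\in\RR^d$, and $t>t_0(x,r)$,
\begin{align*}
\P\bigl(D_xL(\Ppp{t})\neq D_xL(\Ppp{\infty})\bigr)\leq q/10.
\end{align*}
In particular $\P(D_xL(\Ppp{t})>a)\geq 9q/10$ and $\P(D_yL(\Ppp{t})<b)\geq 9q/10$, and the same relations hold with $D_xL(\Ppp{t})$ replaced by $D_xL(\Ppp{\infty}\cap B(x,r))$, and similarly at $y$.

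Since $B(x,r)\cap B(y,r)=\emptyset$, the Poisson restrictions $\Ppp{\infty}\cap B(x,r)$ and $\Ppp{\infty}\cap B(y,r)$ are independent, so
\begin{align*}
\P\bigl(D_xL(\Ppp{t})>a,\ D_yL(\Ppp{t})<b\bigr)\geq (9q/10)^2-q/5,
\end{align*}
which becomes positive after shrinking $q$ if necessary, and the Poisson case follows with $p_0$ any sufficiently small positive constant. For binomial input, I apply Lemma~\ref{lem:Poisson.binomial.convergence} with $A=B(x,r)\cup B(y,r)$: the total variation distance between $\bpp{t}\cap A$ and $\Ppp{\infty}\cap A$ tends to $0$ as $t\to\infty$ at a rate depending only on $|A|\leq 2\pi_d r^d$, so there exists $t_2(r)$ making the distance at most $q/10$ for $t>t_2(r)$. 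Coupling through this bound and then invoking the binomial case of Proposition~\ref{prop:Epm} introduces one additional $q/10$ error, after which the Poisson argument transfers verbatim. The main difficulty is bookkeeping these three layers of approximation (stabilization to a bounded window, Poisson--binomial coupling, and independence on disjoint balls) so that $a$, $b$, and $p_0$ remain uniform in $x$, $y$, and $t$; this uniformity is afforded by the translation invariance of $\Ppp{\infty}$ and the fact that $|B(x,r)\cup B(y,r)|$ depends only on $r$.
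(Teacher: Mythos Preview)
Your approach is essentially the paper's: use nondeterminism of $D_xL(\Ppp{\infty})$, localize via Proposition~\ref{prop:Epm}/Corollary~\ref{cor:weakstab} to $\Ppp{\infty}\cap B(x,r)$, use independence on disjoint balls, and for binomial input invoke Lemma~\ref{lem:Poisson.binomial.convergence} on $A=B(x,r)\cup B(y,r)$ to get $t_2(r)$.

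There is one concrete slip. You set the stabilization error to $q/10$ and then claim that
\[
(9q/10)^2 - q/5
\]
``becomes positive after shrinking $q$ if necessary.'' This is backwards: the main term is quadratic in $q$ while the error is linear, so shrinking $q$ makes the expression \emph{more} negative; for $q<20/81$ it is already negative, and $q$ is determined by the law of $D_0L(\Ppp{\infty})$ and cannot be enlarged at will. The fix is exactly what the paper does: take the error to be a separate parameter $\epsilon$ (not a fixed fraction of $q$), obtaining a bound of the form $(q-\epsilon)^2 - C\epsilon$, and then choose $\epsilon$ small relative to $q^2$ so that this is positive. With this correction your argument goes through.
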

  \begin{proof}
    Let first $\mu=\bpp{t}$.
    By Theorem~\ref{thm:nondgeneracy}, and that the distribution of $D_zL(\Ppp{\infty})$ does not depend on $z$ by translation invariance, there exist $a>b$ and $p>0$ such that
    for all $z\in\RR^d$,
        \begin{align*} %\label{eq:remove.this.label.at.some.point}
        \P\bigl( D_zL(\Ppp{\infty}) > a \bigr)\geq p \qmq{and} 
        \P\bigl( D_zL(\Ppp{\infty}) < b \bigr) \geq p.
        \end{align*}
    Let $p_0 = (p-\epsilon)^2-3\epsilon$, choosing $\epsilon>0$ small enough that $p_0>0$.
    By Corollary~\ref{cor:weakstab}, for all sufficiently large $r$ and for all $z \in \mathbb{R}^d$ the random variables
    $D_z L(\Ppp{\infty})$ and $D_z L(\Ppp{\infty}\cap B(z,r))$ are within $\epsilon$ in total variation
    distance, and hence
    \begin{align}
      \P\bigl( D_zL(\Ppp{\infty}\cap B(z,r)) > a \bigr) \geq p-\epsilon\qmq{and}
        \P\bigl( D_zL(\Ppp{\infty}\cap B(z,r)) < b \bigr)\geq p-\epsilon.
        \label{eq:weakstab}
        \end{align}
       Next, from the total variation convergence given by invoking Lemma~\ref{lem:Poisson.binomial.convergence} with $A=B(x,r)\cup B(y,r)$, for all $r$ large enough that 
       \eqref{eq:weakstab} holds,
       and $t >t_2(r)$ depending on $r$, for any $\{x,y\} \subseteq \mathbb{R}^d$ satisfying $\|x-y\|>2r$,
         \begin{align}
         &\P\bigl( \text{$D_x L(\bpp{t}\cap B(x,r))>a$ and $D_yL(\bpp{t}\cap B(y,r))<b$}\bigr)\nonumber \\
         &\qquad\qquad\geq \P\bigl( \text{$D_x L(\Ppp{\infty}\cap B(x,r))>a$ and $D_yL(\Ppp{\infty}\cap B(y,r))<b$}\bigr)
         -\epsilon \nonumber \\
         &\qquad\qquad\geq (p-\epsilon)^2-\epsilon, \label{eq:UtandB}
         \end{align}
         with the last line following from \eqref{eq:weakstab} and the independence of
         $\Ppp{\infty}\cap B(x,r)$ and $\Ppp{\infty}\cap B(y,r)$.
        By Proposition~\ref{prop:Epm}, for all sufficiently large $r$ and all $t>\max\{t_0(x,r),t_0(y,r)\}$, it holds that
        \begin{align*}
        \P\bigl(D_x L(\bpp{t}\cap B(x,r)) = D_xL(\bpp{t}) \bigr) \ge 1-\epsilon \,\,\, \mbox{and} \,\,\,
        \P\bigl(D_y L(\bpp{t}\cap B(y,r)) = D_yL(\bpp{t}) \bigr) \ge 1-\epsilon.
        \end{align*}
        Hence, by a union bound, 
        \begin{align}
        \P\Bigl( \text{$D_x L(\bpp{t}\cap B(x,r)) = D_xL(\bpp{t})$ and 
        	$D_y L(\bpp{t}\cap B(y,r)) = D_yL(\bpp{t})$} \Bigr)\geq 1-2\epsilon.\label{eq:simadd}
        \end{align}
Now, taking any $r_0$ so that \eqref{eq:weakstab} and \eqref{eq:simadd} hold for all $r>r_0$, for all $t>t_1(x,y,r)$, by \eqref{eq:UtandB} and \eqref{eq:simadd},
    \begin{align*}
      \P\bigl( \text{$D_x L(\bpp{t})>a$ and $D_yL(\bpp{t})<b$}\bigr) \geq (p-\epsilon)^2-\epsilon-2\epsilon
       =p_0.
    \end{align*}

    The proof for the Poisson case is the same, except that the step involving 
    Lemma~\ref{lem:Poisson.binomial.convergence} is unnecessary.
  \end{proof}

We will need the following elementary lemma, which is essentially just Markov's inequality applied
  to a bounded random variable.
  \begin{lemma}\label{lem:markov}
    Suppose that $X$ is a random variable supported on $[0,n]$, and $\E X\geq np$. Then
    \begin{align}\label{eq:markov}
      \P\biggl(X > \frac{np}{2}\biggr) \geq \frac{p}{2-p}.
    \end{align}
  \end{lemma}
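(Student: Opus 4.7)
The plan is to apply a standard two-part decomposition of the expectation based on the threshold $np/2$, and then solve the resulting linear inequality for the probability in question. This is essentially a Paley–Zygmund style argument for bounded nonnegative random variables.

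First, let $q=\P(X>np/2)$ and split
\begin{align*}
  \E X = \E\bigl[X \mathbf{1}(X\le np/2)\bigr] + \E\bigl[X\mathbf{1}(X>np/2)\bigr].
\end{align*}
On the event $\{X\le np/2\}$ the integrand is at most $np/2$, while on $\{X>np/2\}$ we use the support assumption $X\le n$ to bound the integrand by $n$. This gives the upper bound
\begin{align*}
  \E X \le (np/2)(1-q) + nq.
\end{align*}

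Next, I would combine this with the hypothesis $\E X \ge np$, divide through by $n$, and rearrange. Writing $p \le (p/2)(1-q) + q$ and collecting the terms containing $q$ yields $p/2 \le q(2-p)/2$, from which \eqref{eq:markov} follows immediately. Since $0 \le p \le 1$ (because $\E X \le n$ forces $p\le 1$ in the nontrivial setting where the bound is applied), the denominator $2-p$ is positive and the manipulation is valid.

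There is no real obstacle here; the only mild subtlety is remembering to use both bounds on $X$ (the threshold bound on the small-$X$ event and the support bound $X\le n$ on the large-$X$ event), rather than simply applying ordinary Markov, which would only give $q\ge p/2$ and be insufficient for the way this lemma will be invoked in the variance lower bound argument.
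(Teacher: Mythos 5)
Your proof is correct and is essentially the same as the paper's. The paper applies Markov's inequality to $Y = n - X$ (noting $\E Y \le n(1-p)$ and bounding $\P(Y \ge n(1-p/2))$), while you unroll that same argument directly by partitioning the expectation of $X$ at the threshold $np/2$; after the substitution $Y = n - X$ the two calculations coincide line for line.
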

  \begin{proof}
    Let $Y=n-X$. Then $\E Y\leq n(1-p)$, and applying Markov's inequality to $Y$ yields
    \begin{align*}
      \P\biggl(X \leq \frac{np}{2}\biggr) &= \P\biggl(Y\geq n\Bigl(1-\frac{p}{2}\Bigr)\biggr)
        \leq \frac{1-p}{1-p/2},    
    \end{align*}
    yielding \eqref{eq:markov}.
  \end{proof}

In the remainder of this section let $a$, $b$, $r_0$, and $p_0$ be the constants given by Lemma \ref{lem:different.points.can.differ}.
For some $m>0$ and $1<r<\infty$, 
  we say that a pair of points $x$ and $y$ with $\norm{x-y}>2r$
  is \emph{$(m,r,t)$-influential} for $\mu$ if 
  \begin{enumerate}\renewcommand{\theenumi}{\alph{enumi}}\renewcommand*\labelenumi{(\theenumi)}
    \item[] \influential$_1(\mu)$: There exist sets $A\subseteq B(x,1)$ and $B\subseteq B(y,1)$
      each of Lebesgue measure $m$ such that
       $D_z L(\mu)>a$ for  $z\in A$ and $D_z L(\mu)<b$ for $z\in B$, and \label{infl1}
    \item[] \influential$_2(\mu)$: $R_S(B(x,1);\mu;t^{1/d}\X)\leq r$ and $R_S(B(y,1);\mu;t^{1/d}\X)\leq r$. \label{infl2}
  \end{enumerate}
  Note that a pair of influential points for $\mu$ are not required to be, and in fact will in general not be, points {\em of} $\mu$. We have made the radii of the balls containing $x$ and $y$ equal to $1$ in these definitions, but the value is unimportant.
  \begin{lemma} \label{lem:influential12}
    Assume the conditions of Theorem~\ref{prop:variance}.
    There exist constants $m \in (0,\infty),p \in (0,1]$ and $r \in (1,\infty)$ such that 
    if $x$ and $y$ are any two points 
    such that the $(r+1)$--balls centered around each are disjoint, then for all sufficiently large $t$
    \begin{align*}
      \P\bigl(\text{$(x,y)$ is $(m,r,t)$-influential for $\mu$}\bigr)\geq p
    \end{align*}
    for $\mu=\Ppp{t}$ and $\mu=\bpp{t}$.
  \end{lemma}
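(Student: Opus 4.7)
The plan is to verify the two defining properties $\influential_1(\mu)$ and $\influential_2(\mu)$ separately and then combine them by a union bound. For $\influential_2(\mu)$, I will use the exponential tail estimate on the stabilization radius (Proposition~\ref{prop:Rs.exponential.bound}, in its ball version \eqref{eq:Rs.exponential.bound.ball}), which allows us to choose $r$ large enough that both $R_S(B(x,1);\mu;t^{1/d}\X)$ and $R_S(B(y,1);\mu;t^{1/d}\X)$ exceed $r$ only with arbitrarily small probability, uniformly in $x,y$ and for all sufficiently large $t$; after rescaling, the Poisson bound gives $\P(R_S(B(x,1);\Ppp{t};t^{1/d}\X)>r)\le C(1-2/r)^{-d}\exp(-\kappa r^d)$ (and analogously for the binomial case), which goes to zero in $r$. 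The substantive step is $\influential_1(\mu)$, where I must upgrade the pointwise conclusion of Lemma~\ref{lem:different.points.can.differ} (control at a single pair of points) to a conclusion about neighborhoods of positive measure.

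I will carry out this upgrade by a Fubini-and-Markov argument. Fix $r$ larger than the threshold $r_0$ of Lemma~\ref{lem:different.points.can.differ} and also large enough that each of the two failure events contributing to $\influential_2(\mu)^c$ has probability at most $\tfrac{1}{4}p_0/(2-p_0)$. Under the hypothesis that $B(x,r+1)$ and $B(y,r+1)$ are disjoint, the $r$-balls around any $z\in B(x,1)$ and $z'\in B(y,1)$ are also disjoint, so for all $t$ larger than a threshold depending only on $r$, $x$ and $y$ we have $\P(D_zL(\mu)>a,\,D_{z'}L(\mu)<b)\ge p_0$. Writing $|A|=\int_{B(x,1)}\mathbf{1}\{D_zL(\mu)>a\}\,dz$ and $|B|=\int_{B(y,1)}\mathbf{1}\{D_{z'}L(\mu)<b\}\,dz'$, Fubini yields $\E[|A|\cdot|B|]\ge p_0\pi_d^2$, where $\pi_d$ denotes the volume of the unit ball. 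Since the product $|A|\cdot|B|$ is bounded by $\pi_d^2$ almost surely, Lemma~\ref{lem:markov} applied with $n=\pi_d^2$ and $p=p_0$ gives $\P(|A|\cdot|B|>p_0\pi_d^2/2)\ge p_0/(2-p_0)$, and on this event both $|A|$ and $|B|$ exceed $p_0\pi_d/2$, since neither factor exceeds $\pi_d$. Hence $\influential_1(\mu)$ holds on this event with $m:=p_0\pi_d/2$, taking $A=\{z\in B(x,1):D_zL(\mu)>a\}$ and $B=\{z'\in B(y,1):D_{z'}L(\mu)<b\}$.

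Combining the two estimates via a union bound, both $\influential_1(\mu)$ and $\influential_2(\mu)$ hold simultaneously with probability at least $\tfrac{1}{2}p_0/(2-p_0)=:p>0$, for both $\mu=\Ppp{t}$ and $\mu=\bpp{t}$, once $t$ is sufficiently large. The only technical care lies in showing that the $t$-threshold required by Lemma~\ref{lem:different.points.can.differ} can be chosen uniformly over $z\in B(x,1)$ and $z'\in B(y,1)$; this follows from the monotonicity $t_0(z,r)\le t_0(x,r+1)$ in the growth-window condition \eqref{eq:growth}, together with the joint measurability required to apply Fubini. I do not expect any conceptual obstacle; the essential new idea beyond what is already in place is the Fubini--Markov fattening, whose role is precisely to convert a pointwise conclusion into a positive-measure one, as will be needed in the ensuing lower bound on the variance.
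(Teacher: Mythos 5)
Your proof is correct and follows essentially the same approach as the paper: you treat $\influential_2(\mu)$ via Proposition~\ref{prop:Rs.exponential.bound}, and upgrade the single-point conclusion of Lemma~\ref{lem:different.points.can.differ} to a positive-measure one by a Fubini/Markov argument (the paper phrases this in terms of the conditional probability $P(\mu)=\lvert A\rvert\lvert B\rvert/\pi_d^2$, but that is the same quantity you bound, and the constants $m=p_0\pi_d/2$ and $p=p_0/(2(2-p_0))$ agree). The remark about the uniform $t$-threshold via $t_0(z,r)\le t_0(x,r+1)$ matches what the paper does implicitly by taking a supremum.
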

  \begin{proof}
By Proposition~\ref{prop:Rs.exponential.bound}, for all $\{x,y\} \subseteq \mathbb{R}^d$ and $t>\max\{t_0(x,r),t_0(y,r)\}$, as $r\to\infty$ the probability of $\influential_2(\mu)$ is lower bounded by a quantity tending to one, not depending on $\{x,y\}$.
    With $r_0$ and $p_0$ the constants given by 
        Lemma~\ref{lem:different.points.can.differ},
        let $p'_0=p_0/(2-p_0)$, and choose $r>r_0$ large enough that $\influential_2(\mu)$ holds
        with probability at least $1-p'_0/2$.
    Let $X$ and $Y$ be independent and
    distributed uniformly over $B(x,1)$ and $B(y,1)$, respectively. Let
    \begin{align}
      P(\mu) &:= \P\bigl(\text{$D_X L(\mu) > a$ and $D_YL(\mu)<b$} \mid \mu\bigr)\nonumber\\
        &\phantom{:}= \P\bigl(D_X L(\mu) > a\mid\mu\bigr)\P\bigl(D_YL(\mu)<b \mid \mu\bigr). 
           \label{eq:factors}
    \end{align}
    Note that
    \begin{align*}
      \P\bigl(D_X L(\mu) > a\mid\mu\bigr) &=
        \frac{\abs{\{z\in B(x,1)\colon D_z L(\mu)>a\}}}{\abs{B(x,1)}},
    \end{align*}
    with an analogous statement holding for the second factor in \eqref{eq:factors}.
      By Lemma \ref{lem:different.points.can.differ}, using that the $r$-balls around points in $B(x,1)$ and $B(y,1)$ do not intersect, by averaging $X$ and $Y$ over their supports we see that for $t>\sup_{u \in B(x,1), v \in B(y,1)}t_1(u,v,r)$ we have $\E P(\mu) \geq p_0$.
    Since $P(\mu)$ is supported on $[0,1]$, we apply Lemma~\ref{lem:markov} with $n=1$ and $p=p_0$ to conclude that
    $\P(P(\mu)>p_0/2) \geq p_0/(2-p_0)=p_0'$.
    If $P(\mu)\geq p_0/2$, then
    both factors in \eqref{eq:factors} are larger than $p_0/2$. Therefore, with probability
    at least $p_0'$, the pair $(x,y)$ satisfies $\influential_1(\mu)$
    with $m=p_0\abs{B(x,1)}/2$.
    
    Since $\influential_1(\mu)$ holds with probability at least $p'_0$ and $\influential_2(\mu)$
    holds with probability at least $1-p'_0/2$, by a union bound both hold simultaneously with 
    probability at least $p'_0/2$.
  \end{proof}
  From now on, we take $m$, $r$, and $p$ to be constants provided by Lemma \ref{lem:influential12}.

    \begin{lemma}\label{lem:influential}
    Assume the conditions of Theorem~\ref{prop:variance}.
    Let $\influential(\mu,t,\beta)$ be the event that there are at least $\beta t$ pairs of $(m,r,t)$-influential
    points for $\mu$, all of whose $(r+1)$-neighborhoods are disjoint and contained in $t^{1/d}\X$.
    For some $\beta,q>0$ independent of $t$, 
    for either $\mu=\Ppp{t}$ or $\mu=\bpp{t}$, it holds for all sufficiently large
    $t$ that
    \begin{align*}
      \P(\influential(\mu,t,\beta)) \geq q.
    \end{align*}
  \end{lemma}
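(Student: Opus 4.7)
The plan is to construct, for each large $t$, a deterministic collection of $\Theta(t)$ candidate pairs of points in $t^{1/d}\X$, each of which is $(m,r,t)$-influential for $\mu$ with probability at least $p$, and then to extract a subcollection of at least $\beta t$ genuinely influential pairs by linearity of expectation followed by Lemma~\ref{lem:markov}.

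For the geometric packing step, set $L=3(r+1)$ and consider the lattice $L\mathbb{Z}^d$. Because $\X$ is star-shaped with star center at the origin and contains an open set around it, for $t$ large enough the inner parallel set
\begin{align*}
(t^{1/d}\X)_{-(r+1)}=\{x\in t^{1/d}\X\colon B(x,r+1)\subseteq t^{1/d}\X\}
\end{align*}
has volume at least $\tfrac12 t\vol{\X}$, so the number of lattice points it contains is at least $c_1 t$ for some constant $c_1>0$. Pair consecutive lattice points along a fixed coordinate direction, keeping only pairs whose both members lie in the eroded set; this yields $N\geq c_2 t$ deterministic pairs $(x_i,y_i)$, each with $\norm{x_i-y_i}=L=3(r+1)$, and such that the $2N$ balls $B(x_i,r+1)$ and $B(y_i,r+1)$ are pairwise disjoint and contained in $t^{1/d}\X$.

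For the uniform influence probability, note that for each constructed pair the conditions $B(x_i,r+1),B(y_i,r+1)\subseteq t^{1/d}\X$ hold by construction, while the remaining threshold in Lemma~\ref{lem:influential12} depends only on $r$. Therefore, for all $t$ exceeding a threshold depending only on $r$, the event $E_i=\{(x_i,y_i)\text{ is }(m,r,t)\text{-influential for }\mu\}$ satisfies $\P(E_i)\geq p$ for both $\mu=\Ppp{t}$ and $\mu=\bpp{t}$. Setting $I=\sum_{i=1}^N \mathbf{1}_{E_i}$, linearity of expectation --- crucially without requiring any independence of the $E_i$ --- gives $\E I\geq pN$, and because $0\leq I\leq N$, Lemma~\ref{lem:markov} applied with $n=N$ yields $\P(I>pN/2)\geq p/(2-p)$.

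Taking $\beta=pc_2/2$ and $q=p/(2-p)$, both positive and independent of $t$, the event $\{I>pN/2\}$ implies the existence of at least $\beta t$ $(m,r,t)$-influential pairs whose $(r+1)$-neighborhoods are pairwise disjoint and contained in $t^{1/d}\X$, which is precisely the event $\influential(\mu,t,\beta)$. The main conceptual point --- and the only place where one might expect trouble --- is that independence of the $E_i$ is not needed for the counting step; a first-moment estimate combined with a reverse Markov bound on a bounded random variable suffices. This is what lets the argument go through identically for Poisson and binomial input, since the geometric packing relies only on standard volume estimates that are straightforward for star-shaped $\X$.
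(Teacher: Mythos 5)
Your argument is correct and follows essentially the same route as the paper's: pack $\Theta(t)$ candidate pairs with disjoint $(r+1)$-neighborhoods inside $t^{1/d}\X$, apply Lemma~\ref{lem:influential12} to each pair, use linearity of expectation (which indeed requires no independence of the events), and conclude with the reverse Markov bound of Lemma~\ref{lem:markov}, arriving at the same constants $q=p/(2-p)$. The only difference is that you make explicit the lattice packing which the paper asserts in a single sentence. One small inaccuracy: the claim that the eroded set $(t^{1/d}\X)_{-(r+1)}$ has volume at least $\tfrac12 t\vol{\X}$ would require $\partial\X$ to be Lebesgue null, which is not among the stated hypotheses, and in any case a set of volume $\Omega(t)$ need not contain $\Omega(t)$ lattice points; but this is harmless, since $\X$ contains an open ball $B(0,\rho)$ and is star-shaped, so $(t^{1/d}\X)_{-(r+1)}\supseteq B\bigl(0,t^{1/d}\rho-(r+1)\bigr)$, which already contains $\Omega(t)$ cells of your lattice and gives the needed $\Omega(t)$ disjoint pairs directly.
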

  \begin{proof}
    For some $\beta'>0$, for all sufficiently large $t$ one can place at least $2\lceil\beta' t\rceil$ points in $t^{1/d}\X$
    so that all points have disjoint $(r+1)$-neighborhoods contained in $t^{1/d}\X$. 
    Let $n=\lceil\beta' t\rceil$, and arbitrarily form these $2n$ points into $n$ disjoint pairs.
    For large enough~$t$, by Lemma~\ref{lem:influential12},
    each pair has probability at least $p$ of being $(m,r,t)$-influential,
    so the expected number of such $(m,r)$-influential pairs is at least $np$.
    By Lemma~\ref{lem:markov}, there are at least $np/2$ such pairs with probability at least
    $p/(2-p)$. Now we can take $q=p/(2-p)$ and $\beta=p\beta'/3$, say.
  \end{proof}

\begin{proof}[Proof of Theorem~\ref{prop:variance}]
    It suffices to show that there exists $v$ such that $\var L(\mu)\geq vt$
    where  $\mu$ is either Poisson on $t^{1/d}\X$ with intensity~$1$ or binomial
    with $\lceil t\rceil$ points. Indeed, as $\psi(ax,ay)=a^\alpha \psi(x,y)$ for any $a>0$, we have $L(a\mu)=a^\alpha L(\mu)$, where $a\mu=\{ax, x \in \mu\}$. Hence, when $\var L(\mu)\geq vt$, scaling a process $\mu$ on $t^{1/d}\X$ to a process on $\X$, we have
    \begin{align*}
    	\var(L(t^{-1/d}\mu))= 	\var(t^{-\alpha/d}L(\mu)) = t^{-2\alpha/d} \var(L(\mu)) \ge v t^{1-2\alpha/d}.
    	\end{align*}

    %{\extendedtcolor
    The argument will go by splitting $\mu$ into a sum of independent point processes $\mu_1$ and $\mu_2$.
    Initially, take $\mu_1$ to be a deterministic set of points such that
    $\influential(\mu_1,t,\beta)$ holds for some $\beta>0$, and define $\mu_2$ as a point process
    on $t^{1/d}\X$ that is either Poisson
    with intensity $1/2$ or binomial with $\lfloor t/2\rfloor$ points. We start by arguing that
    $\var L(\mu_1\cup\mu_2)> Ct$ for some $C{  >0}$.
    
    Since $\influential(\mu_1,t,\beta)$ holds,    
    there exist point pairs $(x_1,y_1),\ldots,(x_n,y_n)$ with  $n \ge \beta t$
    with sets $A_i\subseteq B(x_i,1)$
    and $B_i\subseteq B(y_i,1)$ of measure~$m$ such that $\influential_1(\mu_1)$ and
    $\influential_2(\mu_1)$ hold for each pair.
    For some $\gamma>0$ to be specified, consider the event
   \begin{align*}
    F= \left\{ \bigl\lvert \left\{ 1 \le i \le n \colon  \bigl\lvert \mu_2\cap (B(x_i,r+1)\cup B(y_i,r+1)) \bigr\rvert =
    \bigl\lvert \mu_2\cap (A_i \cup B_i) \bigr\rvert=1 \right\} \bigr\rvert \ge \gamma n \right\}, %\label{eq:one.point}
    \end{align*}   
   that is, that for at least
    $\gamma n$ of the pairs $(x_i,y_i)$, 
   exactly one point of $\mu_2$ lands in the $(r+1)$-neighborhoods of $x_i$ and $y_i$, and it
    lands in either $A_i$ or $B_i$. We claim that $F$ occurs with positive probability not depending on $t$.
    Indeed, for any fixed $i$, the process $\mu_2$ will satisfy 
    \begin{align}
      \bigl\lvert \mu_2\cap (B(x_i,r+1)\cup B(y_i,r+1)) \bigr\rvert =
    \bigl\lvert \mu_2\cap (A_i \cup B_i) \bigr\rvert=1 \label{eq:one.in.each}
    \end{align}
    with at least some fixed, positive probability
    for large enough $t$. Choosing $\gamma$ small enough, the event $F$ then holds with some positive
    probability independent of $t$ by Lemma~\ref{lem:markov}.

    Now, the idea is that given that $\mu_2$ has exactly one point in either $A_i$ or $B_i$, 
    it is equally likely to be in either. Conditional on $F$, we then essentially have 
    $\gamma n=\Omega(t)$ coin flips, each contributing a constant term to $\var L(\mu_1\cup\mu_2)$.
    To put this into practice, we
    partition $\mu_2$ into $\{X_1,\ldots,X_l\}$ and $\{Y_1,\ldots,Y_{l'}\}$, where
    the first set consists of the points of $\mu_2$ that are contained in $A_i\cup B_i$
    for some $i$ satisfying \eqref{eq:one.in.each}. Thus $l\geq\gamma n$ when $F$ holds.
    Now, let $\widetilde{\mu}=\mu_1\cup\{Y_1,\ldots,Y_{l'}\}$, and
    express $L(\mu_1\cup \mu_2)$ as the telescoping sum
    \begin{align*}
      L(\mu_1\cup \mu_2) &= L(\widetilde{\mu}) + D_{X_1}L(\widetilde{\mu}) + D_{X_2}L(\widetilde{\mu}\cup\{X_1\})
               +\cdots + D_{X_l}L(\widetilde{\mu}\cup\{X_1,\ldots,X_{l-1}\}).
    \end{align*}
    By $\influential_2(\mu_1)$, for any $1\leq j\leq l$ we have $R_S(X_{j};\mu_1;t^{1/d}\X)\leq r$.
    Because $X_j$ satisfies \eqref{eq:one.in.each} for some $i$,
    all points of $\mu_2$ except for $X_{j}$ lie outside of $B(X_{j},r)$.
    By \eqref{eq:add.points} of Lemma \ref{lem:stabilization},
    \begin{align*}
      D_{X_{j}}L(\widetilde{\mu}\cup\{X_1,\ldots,X_{j-1}\}) = D_{X_{j}}L(\mu_1).
    \end{align*}
    Thus we can rewrite
    $L(\mu_1\cup\mu_2)$ as
    \begin{align}\label{eq:sum.of.independents}
      L(\mu_1\cup\mu_2) &= L(\widetilde{\mu}) + D_{X_1}L(\mu_1) + D_{X_2}L(\mu_1) + \cdots + D_{X_l}L(\mu_1).
    \end{align}
    By construction, $X_{j}$ falls into $A_i\cup B_i$
    for exactly one choice of $i$.
    Conditional on $F$, the point $X_{j}$ is equally likely to be in $A_{i}$ or $B_{i}$.
    Furthermore, which of these it lands in is independent for $1\leq j\leq l$ conditional on $F$.
    If $X_{j}$ lands in $A_{i}$, then $D_{X_{j}}L(\mu_1)>a$, and if $X_{j}$ lands in $B_{i}$, then
    $D_{X_j}L(\mu_1)<b$, by the definition of $\influential_1(\mu_1)$.
    Thus, \eqref{eq:sum.of.independents} expresses $L(\mu_1\cup\mu_2)$
    as a sum of terms that are conditionally independent given $F$ and $\widetilde{\mu}$
    and which each have conditional variance bounded from below, showing that
    \begin{align*}
      \var \Bigl( L(\mu_1\cup\mu_2) \ \Big\vert\  \1_F,\widetilde{\mu} \Bigr)
        \geq Cl \geq C\gamma n \geq C\gamma\beta t
    \end{align*}
    on the event $F$, for some absolute constant $C>0$.
    As $F$ occurs with probability that can be bounded away from zero uniformly for all $t$, and
    \begin{multline*}
      \var L(\mu_1\cup\mu_2)  = \E \var \Bigl( L(\mu_1\cup\mu_2) \ \Big\vert\  \1_F,\widetilde{\mu} \Bigr)+\var \E  \Bigl( L(\mu_1\cup\mu_2) \ \Big\vert\  \1_F,\widetilde{\mu} \Bigr) \\ \geq \E \var \Bigl( L(\mu_1\cup\mu_2) \ \Big\vert\  \1_F,\widetilde{\mu} \Bigr),
    \end{multline*}
    we have shown that $\var L(\mu_1\cup\mu_2)$ grows at least
    as a constant times $t$.

    To complete the proof, we now let $\mu_1$ be a point process on $t^{1/d}\X$, independent of $\mu_2$,
    and either Poisson with intensity~$1/2$ or binomial with $\lceil t/2 \rceil$ points.
    Thus $\mu$ can be expressed as $\mu_1\cup \mu_2$.
    By Lemma~\ref{lem:influential}, for all $t$ sufficiently large, the event $\influential(\mu_1,t,\beta)$ holds with probability at least $q$ for some $\beta,q>0$ not depending on $t$. 
    (Strictly speaking, we replace $\X$ by $2^{1/d}\X$ and $t$ by $t/2$ when applying 
    Lemma~\ref{lem:influential}.)
    By the previous argument, the variance of $L(\mu)$ conditional on $\influential(\mu_1,t,\beta)$
    for sufficiently large $t$
    is at least $Ct$ for a constant $C>0$ not depending on $t$, from which the theorem follows. 
  \end{proof}

    \bibliographystyle{plain}
    \bibliography{gbg}

\end{document}